\newtheorem{theorem}{Theorem}[section]
\newtheorem{proposition}[theorem]{Proposition}
\newtheorem{corollary}[theorem]{Corollary}
\newtheorem{lemma}[theorem]{Lemma}
\theoremstyle{definition}
\newtheorem{definition}[theorem]{Definition}
\theoremstyle{definition}
\theoremstyle{definition}
\DeclareSymbolFont{AMSb}{U}{msb}{m}{n}
\DeclareMathSymbol{\N}{\mathbin}{AMSb}{"4E}
\DeclareMathSymbol{\Z}{\mathbin}{AMSb}{"5A}
\DeclareMathSymbol{\R}{\mathbin}{AMSb}{"52}
\DeclareMathSymbol{\Q}{\mathbin}{AMSb}{"51}
\DeclareMathSymbol{\I}{\mathbin}{AMSb}{"49}
\DeclareMathSymbol{\C}{\mathbin}{AMSb}{"43}
\begin{document}
\title[Topological isomorphism for rank-1 systems]{Topological isomorphism for rank-1 systems}
\author{Su Gao}
\address{Department of Mathematics\\ University of North Texas\\ 1155 Union Circle \#311430\\  Denton, TX 76203\\ USA}
\email{sgao@unt.edu}
\author{Aaron Hill}
\address{Department of Mathematics\\ University of North Texas\\ 1155 Union Circle \#311430\\  Denton, TX 76203\\ USA}
\email{Aaron.Hill@unt.edu}
\date{\today}
\subjclass[2010]{Primary 54H20, 37A35, 37C15; Secondary 54H05, 37B10}
\keywords{topological isomorphism, topological conjugacy, rank-1 word, rank-1 system}
\thanks{The first author acknowledges the US NSF grants DMS-0901853 and DMS-1201290 for the support of his research. The second author acknowledges the US NSF grant DMS-0943870 for the support of his research.}
\maketitle \thispagestyle{empty}

\begin{abstract}
We define the Polish space $\mathcal{R}$ of non-degenerate rank-1 systems.  Each non-degenerate rank-1 system can be viewed as a measure-preserving transformation of an atomless, $\sigma$-finite measure space and as a homeomorphism of a Cantor space.  We completely characterize when two non-degenerate rank-1 systems are topologically isomorphic.  We also analyze the complexity of the topological isomorphism relation on $\mathcal{R}$, showing that it is $F_{\sigma}$ as a subset of $\mathcal{R} \times \mathcal{R}$ and bi-reducible to $E_0$.  We also explicitly describe when a non-degenerate rank-1 system is topologically isomorphic to its inverse.
\end{abstract}

\tableofcontents


\section{Introduction}

\subsection{Two open questions in ergodic theory}

One motivation of the work in this paper comes from two open questions concerning the conjugacy action on the group of invertible measure-preserving transformations.  Let Aut$(X, \mu)$ denote the group of invertible, measure-preserving transformations of a standard Lebesgue space, taken modulo null sets and equipped with the weak topology.  With the topology comes a notion of genericity.  A subset of Aut$(X, \mu)$ is {\em generic} if its complement is a countable union of nowhere dense sets.  We then say that a generic transformation satisfies a certain property if that property defines a generic subset of Aut$(X, \mu)$.

The group Aut$(X, \mu)$ acts on itself by conjugation, and the two open questions come from complementary aspects of this action.  The first question deals with the orbit structure of the action:  How can one determine whether two elements of Aut$(X, \mu)$ are conjugate (i.e., in the same orbit)?  As conjugacy is the natural choice for a notion of isomorphism in this context, this question is frequently referred to as the isomorphism problem for invertible measure-preserving transformations.  

The second question deals with stabilizers.  An informal version of this question is this:  If one chooses an element of Aut$(X, \mu)$ at random, what properties will the stabilizer of that transformation likely have?  It is clear that the stabilizer of an element of Aut$(X, \mu)$ under the conjugacy action is exactly the centralizer of that element in the group Aut$(X, \mu)$.  Thus, this problem is sometimes called the centralizer problem for invertible measure-preserving transformations.  

\subsubsection{The isomorphism problem}

 The isomorphism problem was first formulated by von Neumann, who together with Halmos gave a very important partial solution (see \cite{HalmosvonNeumann}).  They showed that for ergodic transformations with pure point spectrum, the spectrum is a complete invariant.  That is, two ergodic transformations with pure point spectrum are conjugate if and only if they have the same spectrum.  Another very important partial solution was given by Ornstein, who showed in \cite{Ornstein} that for Bernoulli shifts, entropy is a complete invariant.
 
In the past few decades, however, there have been several papers showing that certain types of ``nice" classifications are impossible for all of Aut$(X, \mu)$ (or even for a generic subset of Aut$(X, \mu)$).  One type of ``nice" classification is connected with Borel reducibility and the strongest of the anti-classification results of this type was given by Foreman and Weiss in 2004.  Building on the work of Hjorth (\cite {Hjorth1}), they showed in \cite {ForemanWeiss} that no generic subset of Aut$(X, \mu)$ is classifiable by countable structures.

Another type of ``nice" classification deals with a description of the conjugacy (isomorphism) relation on Aut$(X, \mu)$ as a subset of $\textnormal{Aut}(X, \mu) \times \textnormal{Aut}(X, \mu)$.  Hjorth showed in \cite{Hjorth1} that this is non-Borel and Foreman, Rudolph and Weiss showed in \cite{ForemanRudolphWeiss} that when restricted to the (generic) class of ergodic transformations, the isomorphism relation is still not Borel.

However, in the same paper Foreman, Rudolph, and Weiss show that for a different generic class of transformations, the rank-1 transformations, the isomorphism relation is Borel.  Unfortunately, their proof is abstract and it gives no explicit way of determining when two rank-1 transformations are isomorphic and no bound on the Borel complexity of the isomorphism relation on rank-1 transformations.

It would be very nice have an explicit method of determining when two rank-1 transformations are (measure-theoretically) isomorphic.

\subsubsection{The centralizer problem}

We are interested in the size and structure of the centralizer of a generic transformation.  For any $T \in \textnormal{Aut}(X, \mu)$ the centralizer $C(T)$ contains $\overline{\{T^i : i \in \mathbb{Z}\}}$, since $\textnormal{Aut}(X, \mu)$ is a topological group.  An important result of Jonathan King is that for rank-1 transformations, there is equality (see \cite{King1}).  Since a generic transformation is rank-1, this imlies that the centralizer of a generic transformation is abelian and, in some sense, as small as possible.

On the other hand, recent results of King \cite{King2}, de Sam Lazaro--de la Rue \cite{deSamLazarodelaRue}, Ageev \cite{Ageev}, Stepin--Eremenko \cite{StepinEremenko}, Tikhonov \cite{Tikhonov}, Melleray--Tsankov \cite{MellerayTsankov}, and Solecki \cite{Solecki} show that the centralizer of a generic transformation is large and structurally rich.  For example, Stepin and Eremenko showed that every compact abelian group embeds into the centralizer of a generic transformation.  Mellerey and Tsankov showed that the centralizer of a generic transformation is extremely amenable.  Solecki showed that every element of the centralizer of a generic transformation is in a one-parameter subgroup of that centralizer.

It would be very nice to know whether there exists a Polish group $G$ so that the centralizer of a generic transformation is isomorphic to $G$.  If there is such a $G$, of course it would be nice to know what that $G$ is.

Because rank-1 transformations are generic, questions about the centralizer of a generic transformation are the same as the corresponding questions for a generic rank-1 transformation.  Because of King's weak closure theorem, these questions are the same as the corresponding questions about $\overline{\{T^i : i \in \mathbb{Z}\} }$, for generic rank-1 $T$.

\subsubsection{Rank-1 systems}

There are various definitions for rank-1 transformation in the literature, and not all of them are equivalent.  The two most common definitions involve cutting and staching transformations and symbolic rank-1 systems.  Every cutting and stacking transformation, except those isomoprhic to odometers, can be realized as (i.e., is isomophic to) a symbolic rank-1 system.  For further information regarding the many definitions of rank-1 transformation and connections between them, see Ferenczi's survey article \cite{Ferenczi}.  


In the symbolic definition of rank-1, one constructs a collection of concrete rank-1 systems, each concrete rank-1 system can be viewed either as a measure-preserving transformation of a standard Lebesgue space or as a homeomorphism of a Cantor space.  One then defines a measure-preserving transformation to be rank-1 if it is measure-theoretically isomorphic to a concrete rank-1 system.  One could analogously define a homeomorphism to be rank-1 if it is topologically isomorphic to a concrete rank-1 system.

For the open questions described above, one would be interested in:  When are two concrete rank-1 systems measure-theoretically isomorphic?  What measure-theoretic self-isomorphisms exist for particular concrete rank-1 systems?

The topological analogues would be:  When are two concrete rank-1 systems topologically isomorphic?  What topological self-isomorphisms exist for particular concrete rank-1 systems?  In this paper we will completely answer these topological analogues.

It should be noted that the systems that we are concerned with in this paper (what we call non-degenerate rank-1 systems) are more general than the constructive symbolic rank-1 systems in Ferenczi's article.  Our definition omits a restriction that guarantees that the rank-1 system can be viewed as a measure-preserving transformation of a standard Lebesgue space.  Without that restriction, the rank-1 system can be viewed as a measure-preserving transformation of a measure (but not necessarily probability) space.  Since we are interested in viewing rank-1 systems only as homeomorphisms of Cantor space, this restriction is not present in our definition.

\subsection{Main results}
The non-degenerate rank-1 systems we consider are all Bernoulli subshifts, that is, closed subsets of $\{0,1\}^\Z$ that are invariant under the shift map $\sigma$. For each non-degenerate rank-1 system $(X, \sigma)$, there is a particular $V\in \{0,1\}^\N$ that is associated to $(X, \sigma)$.  It is of a certain form (it is a non-degenerate rank-1 word) and is such that
$$ X=\{x\in \{0,1\}^\Z\,:\, \mbox{every finite subword of $x$ is a subword of $V$}\}. $$
The correspondence between non-degenerate rank-1 words and non-degenerate rank-1 systems is one-to-one, and we will consider the collection $\mathcal{R}$ of all non-degenerate rank-1 words as a space of codes for all non-degenerate rank-1 systems. We will define a natural (Polish) topology on $\mathcal{R}$ and regard $\mathcal{R}$ with this topology as the space of all non-degenerate rank-1 systems.

Our first main theorem gives a complete characterization when two non-degenerate rank-1 systems are topologically isomorphic. We define a simple form of block code, called a replacement scheme, for non-degenerate rank-1 systems, and call an isomorphism coming from a replacement scheme {\em stable}. 

\begin{theorem} 
If $\phi$ is a topological isomorphism between two non-degenerate rank-1 systems then there is some $i \in \Z$ so that $\phi \circ \sigma^i$ is a stable isomorphism.
\end{theorem}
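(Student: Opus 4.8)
The plan is to combine the Curtis--Hedlund--Lyndon theorem with the rigidity of the level-$n$ parsings of rank-1 words established above. Since a topological isomorphism between subshifts is a sliding block code, there is a radius $k$ and a local rule $\psi\colon\{0,1\}^{2k+1}\to\{0,1\}$ with $\phi(x)_j=\psi(x_{j-k}\cdots x_{j+k})$ for every $x$ in the domain; applying the same to $\phi^{-1}$ and enlarging $k$, we may assume this radius works for $\phi^{-1}$ as well. Let $(B_n)$ and $(B'_n)$ be the generating blocks of the domain system $X$ and the range system $Y$. Since $|B_n|\to\infty$ and $|B'_n|\to\infty$, fix $n$ so large that $|B_n|$ and $|B'_n|$ dwarf $k$ and that the rigidity lemmas apply at level $n$ on both sides.

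First I would show that $\phi$ carries the level-$n$ structure of $X$ onto the level-$m$ structure of $Y$ for a suitable $m$, up to a bounded shift. By the rigidity lemmas, each $x\in X$ has an essentially canonical parsing into $B_n$-blocks separated by maximal spacer runs. Away from a margin of width $k$ at each end of a $B_n$-block, the $\psi$-image of that block is a fixed word $W$, independent of which occurrence is chosen and --- by continuity of $\phi$ together with density of the points whose parsings are generic --- independent of $x$. Using that $\phi^{-1}$ is also a block code of radius $k$, one shows that $W$ is, up to a fixed translation amount $i$ and up to the width-$k$ margins, exactly the block $B'_m$ for an appropriate $m$, flanked by the correct partial spacer runs, and that $i$ depends neither on the occurrence nor on $x$. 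Replace $\phi$ by $\phi'=\phi\circ\sigma^{i}$.

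It then remains to verify that $\phi'$ is stable, i.e.\ arises from a replacement scheme. After this shift, $\phi'$ sends each $B_n$-block in the parsing of $x$ exactly onto a $B'_m$-block in the parsing of $\phi'(x)$, and each maximal spacer run exactly onto a maximal spacer run of the correspondingly adjusted length. Running the same analysis at every level $n'\ge n$ and invoking uniqueness of the parsings shows that these block-to-block assignments are mutually compatible, and such a compatible family of length-adjusting assignments is, by definition, a replacement scheme. Hence $\phi'=\phi\circ\sigma^{i}$ is stable, which is precisely the assertion of the theorem.

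The crux of the argument --- and the step I expect to be hardest --- is showing that a \emph{single} offset $i$ governs the correspondence simultaneously at all levels $n'\ge n$, together with handling the ``boundary'' points of $X$ whose level-$n$ parsings are one-sidedly degenerate (for instance points that are eventually a single infinite spacer run). The first issue requires a compatibility argument relating the level-$n$ and level-$(n+1)$ pictures; for the second, one must show that the replacement scheme extracted from the generic orbit controls $\phi'$ everywhere, again via continuity and density. It is exactly here that non-degeneracy is essential: it excludes the odometer-type and all-spacer degeneracies under which the offset could drift or the block-to-block map fail to be well defined.
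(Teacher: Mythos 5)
Your overall strategy --- local determination of $\phi$ (via Curtis--Hedlund--Lyndon, which plays the role of the paper's clopen sets $E_{v,j}$ and Proposition \ref{propexpbasis}) followed by a uniformity argument for the induced block-to-block correspondence --- is the right one and is in the same spirit as the paper's proof. But there is a genuine gap at exactly the point you yourself flag as the crux: you assert, rather than prove, that the image of a $B_n$-block is a fixed word at a fixed offset, ``independent of which occurrence is chosen and independent of $x$.'' Continuity (or the block-code property) only yields that an expected occurrence of $v_3$ beginning at $k$ in $x$ forces \emph{some} expected occurrence of $w_2$ beginning at $k+r$ in $\phi(x)$ for one fixed $r$; it does not rule out that the \emph{full collection} of expected $w_2$-occurrences associated to that $v_3$-block varies with $x$ (say, an extra $w_2$-occurrence at relative offset $r'$ present for some $x$ but absent for others). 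Ruling this out is the content of the paper's key Proposition \ref{proprepcase1}, and its proof is not soft: in the minimal case it requires having chosen $v_1$ longer than the maximal spacer run of $Y$ so that two distinct forced occurrences of $v_1$, which must be disjoint, yield the impossible inequality $|b-a|\geq |v_1|>Y_{\max}$; in the non-minimal case it further requires Lemma \ref{lemma1s}, a correspondence of long spacer runs between $x$ and $\phi(x)$ proved using the unique points of $X$ with a first (resp.\ last) occurrence of $0$. An appeal to ``continuity and density of generic parsings'' does not substitute for this, particularly in the non-minimal case where the system is not minimal and orbits are not dense.

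Two smaller points. First, your claim that the image of $B_n$ is ``exactly the block $B'_m$'' for a canonical generating block of $Y$ is neither needed nor generally true: the word $w$ that the argument produces is just the smallest subword of $\phi(x)$ covering the $w_2$-occurrences associated to one $v_3$-block, which is an element of $A_W$ but need not belong to the canonical generating sequence $B_W$. Second, the compatibility ``simultaneously at all levels $n'\geq n$'' that you identify as a difficulty is not actually required: stability demands only a \emph{single} replacement scheme $(v,w)$, so once the uniformity of the correspondence at one level is established the proof is complete.
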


As an immediate corollary, we are able to characterize all self-isomorphisms of a non-degenerate rank-1 system.

\begin{theorem} 
If $(X, \sigma)$ is a non-degenerate rank-1 system and $\tau$ is an autohomeomorphism of $X$ that commutes with $\sigma$, then for some $i \in \Z$, $\tau = \sigma^i$.
\end{theorem}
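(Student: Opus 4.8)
The plan is to read this off directly from the preceding theorem. An autohomeomorphism $\tau$ of $X$ that commutes with $\sigma$ is exactly a topological isomorphism from the non-degenerate rank-1 system $(X,\sigma)$ to itself, so the preceding theorem supplies some $i \in \Z$ for which $\psi := \tau \circ \sigma^i$ is a stable isomorphism from $(X,\sigma)$ to $(X,\sigma)$. Thus it is enough to show that the only stable self-isomorphism of a non-degenerate rank-1 system is the identity: granting that, $\psi = \mathrm{id}_X$, and hence $\tau = \sigma^{-i}$, which is the assertion of the theorem with $-i$ in place of $i$.

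To establish that residual claim I would argue straight from the definition of a replacement scheme. A replacement scheme, and hence the stable isomorphism it induces, is determined by a finite amount of combinatorial data matching the generating blocks of the rank-1 word attached to the domain with those of the rank-1 word attached to the range. For a self-isomorphism the domain and range coincide, and since the assignment of a non-degenerate rank-1 word $V$ to a non-degenerate rank-1 system is one-to-one, the two words are literally the same word $V$. The crux is then that the normalization built into the adjective ``stable'' --- that the block code carries no net shift, which is precisely the freedom removed in passing from $\phi$ to $\phi \circ \sigma^i$ in the preceding theorem --- leaves no remaining latitude: the only replacement scheme from $V$ to $V$ with zero shift is the trivial one, so $\psi$ is the identity block code on $X$.

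Hence essentially all of the work beyond invoking the preceding theorem is this last rigidity check, and I expect it to be a short inspection of the definitions of replacement scheme and stable isomorphism together with the uniqueness of the rank-1 word associated with a system. The one subtlety to keep in mind is that ``stable'' must genuinely pin the shift to zero; if for some reason it only bounded the shift, one would instead have to note that a stable self-isomorphism equal to some $\sigma^j$ forces $j = 0$ and prove that by a direct examination of the spacer parameters of $V$.
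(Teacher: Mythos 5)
Your proposal is correct and follows essentially the same route as the paper: invoke the preceding theorem to get a stable self-isomorphism $\tau\circ\sigma^q$, use Proposition \ref{propreplacement} (together with the fact that the word attached to a system is unique) to see that its replacement scheme $(v,w)$ is a replacement scheme for $V$ and $V$, deduce $v=w$, and conclude the map is the identity. Your worry about whether ``stable'' pins the shift to zero is unfounded — the definition requires expected occurrences at the \emph{same} position $k$ in $x$ and $\phi(x)$ — so the argument closes exactly as you describe.
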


This result was proved in a slightly more general context by the second author, in \cite{Hill}.

Any non-degenerate rank-1 word is generated by an infinite sequence of finite words. Our analysis also allows us to identify a canonical generating sequence for a non-degenerate rank-1 word. The significance of this canonical generating sequence is that, if two non-degenerate rank-1 systems are isomorphic, then their respective generating sequences will eventually match up in some specific way. We are therefore able to determine the exact complexity of the topological isomorphism relation on non-degenerate rank-1 systems in the Borel reducibility hierarchy. Recall that $E_0$ is the eventual agreement relation on  $\{0,1\}^\N$, that is, the equivalence relation on $\{0,1\}^\N$ defined by
$$ xE_0y\iff \exists n\ \forall m>n\ x(m)=y(m). $$

\begin{theorem}\label{rank1isom} The topological isomorphism relation for non-degenerate rank-1 systems is Borel bi-reducible with $E_0$. Also, the topological isomorphism relation is $F_\sigma$ as a subset of $\mathcal{R}\times\mathcal{R}$.
\end{theorem}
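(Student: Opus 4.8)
The plan is to handle the Borel complexity (the $F_\sigma$ claim) using the first main theorem, and the Borel bi-reducibility with $E_0$ using the canonical generating sequence; the two parts are essentially independent. For the $F_\sigma$ claim, the starting point is that, by the first main theorem, two non-degenerate rank-1 systems coded by $V, W \in \mathcal{R}$ are topologically isomorphic if and only if there is a \emph{stable} isomorphism from the system coded by $V$ onto the one coded by $W$: an arbitrary isomorphism $\phi$ becomes stable after precomposition with a suitable power of $\sigma$, and a stable isomorphism is in particular an isomorphism. Every stable isomorphism is induced by a replacement scheme, and there are only countably many replacement schemes, each being a finite piece of combinatorial data. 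So it suffices to prove that, for a fixed replacement scheme $s$, the set $A_s$ of pairs $(V,W)$ for which $s$ induces an isomorphism from the system coded by $V$ onto the system coded by $W$ is closed (or at least $F_\sigma$) in $\mathcal{R}\times\mathcal{R}$; the isomorphism relation is then $\bigcup_s A_s$, hence $F_\sigma$. To see that $A_s$ is closed, I would use that the generating sequence of $V$ depends continuously on $V$ and that whether $s$ correctly translates the generating data of $V$ into that of $W$ can be checked stage by stage, each stage being a clopen condition; thus $A_s$ is an intersection of clopen sets.

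For the reduction of topological isomorphism to $E_0$, I would use the canonical generating sequence. The map sending $V \in \mathcal{R}$ to its canonical generating sequence is Borel (in fact continuous) into a Polish space of sequences of finite words, and the central structural consequence of the canonical-sequence analysis is that two non-degenerate rank-1 systems are topologically isomorphic exactly when their canonical generating sequences agree from some index on, possibly after deleting a finite initial segment from one of them. The equivalence relation this induces is an eventual-agreement (tail) relation on sequences over a countable alphabet, which is Borel reducible to $E_0$ by a routine argument; composing the two reductions gives that topological isomorphism is Borel reducible to $E_0$. (This is consistent with the $F_\sigma$ claim, since $E_0$ is itself $F_\sigma$.)

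For the reduction of $E_0$ to topological isomorphism, I would build a continuous map $x \mapsto V_x$ from $\{0,1\}^\N$ into $\mathcal{R}$ whose cutting-and-spacer construction is already in canonical form and faithfully records $x$: at stage $n$, the relevant numerical parameter --- say, the number of spacers inserted at a designated position --- is a fixed, rapidly growing quantity modified by the single bit $x(n)$, chosen large enough that $V_x$ is non-degenerate and that the stage-$n$ entry of the canonical generating sequence of $V_x$ determines $x(n)$, for every $n$. If $x\,E_0\,y$, then the constructions of $V_x$ and $V_y$ agree from some stage on, so $V_x$ and $V_y$ are topologically isomorphic by the invariance of the isomorphism class under a finite change of the initial part of the generating sequence; conversely, if $V_x$ and $V_y$ are topologically isomorphic, then their canonical generating sequences eventually agree, and by faithfulness of the coding $x$ and $y$ eventually agree, i.e.\ $x\,E_0\,y$. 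Combined with the previous paragraph, this yields Borel bi-reducibility with $E_0$.

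The step I expect to be the main obstacle is this last construction: the family $\{V_x\}$ must be designed so that the stage-$n$ bit is genuinely part of the \emph{canonical} generating sequence --- not absorbed or simplified away when passing to canonical form --- while at the same time every non-degeneracy constraint defining $\mathcal{R}$ is respected and the map $x\mapsto V_x$ stays continuous. A secondary technical point, on the $F_\sigma$ side, is to verify that ``$s$ carries $V$ to $W$'' really does unwind into a clopen condition at each finite stage of the generating sequences, which requires carefully matching the definition of a replacement scheme to the chosen topology on $\mathcal{R}$.
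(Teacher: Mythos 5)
Your $F_\sigma$ argument is essentially the paper's: isomorphism is the union over the countably many pairs $(v,w)\in\mathcal{F}\times\mathcal{F}$ of the set of $(V,W)$ for which $(v,w)$ is a replacement scheme for $V$ and $W$, and each such set is closed because ``$V$ is built from $v$'' is a countable intersection of clopen conditions and the expected-occurrence condition depends on only finitely many coordinates. That half is fine.

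The bi-reducibility half has a genuine gap. Your central claim --- that $V$ and $W$ are topologically isomorphic exactly when their canonical generating sequences agree from some index on, after deleting a finite initial segment from one of them --- is false, and in fact degenerates to equality: every $v_n\in B_V$ is an initial segment of $V$ (since $V$ is built from $v_n$, it begins with $v_n$), so the sequence $(v_{m+k})_k$ converges to $V$, and if $(v_{m+k})_k=(w_{n+k})_k$ for all $k$ then $V=W$. Concretely, in the paper's own reduction the inputs $\alpha=000\ldots$ and $\beta=1000\ldots$ are $E_0$-related and produce isomorphic systems, yet $v_1=01000$ and $w_1=00100$ already differ and all later canonical words differ as well. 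What is actually invariant under isomorphism is not the words $v_n$ themselves but the \emph{pattern} by which $v_{n+1}$ is built from $v_n$ (the quantities $a_i+|v_n|$): the correct statement, via Corollary~\ref{correplacement} and Lemma~\ref{lemmaRS1}, is that $V\cong W$ iff some $(v_m,w_n)$ with $v_m\in B_V$, $w_n\in B_W$ is a replacement scheme, equivalently iff the two sequences of building patterns are tail-equivalent. With that correction your reduction from $E_0$ can be repaired (it is then essentially the paper's Proposition~\ref{propreduction}, including the nontrivial verification that the constructed $v_m$ exhaust $B_V$ so that the bits survive into canonical form), but your reduction \emph{to} $E_0$ as written composes the continuous map $V\mapsto(v_n)_n$ with a reduction of tail-agreement to $E_0$, and since isomorphic $V,W$ generically have non-tail-equivalent word sequences, that composite is not a reduction. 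You would need to reduce instead the tail equivalence of pattern sequences (and argue that relation is reducible to $E_0$), or do what the paper does: exhibit the isomorphism relation as an increasing union of finite Borel equivalence relations $\sim_k$ (bounding the norm $\|(v,w)\|$ of a canonical replacement scheme), hence hyperfinite, and invoke Dougherty--Jackson--Kechris.
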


Finally we also provide a satisfactory solution to the topological version of the inverse problem for non-degenerate rank-1 systems.

\begin{theorem} Let $X$ be a non-degenerate rank-1 system and $(v_n\,:\, n\in\N)$ be its canonical generating sequence. Then $(X,\sigma)$ is topologically isomorphic to its inverse $(X,\sigma^{-1})$ if and only if for all but finitely many $n\in\N$, $v_{n+1}$ is built symmetrically from $v_n$.
\end{theorem}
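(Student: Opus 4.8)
The plan is to reduce the statement to a question about the reverse system and then feed it into the structural characterization of topological isomorphism that underlies Theorem~\ref{rank1isom}. Let $R\colon\{0,1\}^\Z\to\{0,1\}^\Z$ be the reversal map $R(x)(n)=x(-n)$. Then $R$ is a homeomorphism, $R\sigma R^{-1}=\sigma^{-1}$, and for any subshift $X$ the image $RX$ is again a subshift with $R\colon(X,\sigma)\to(RX,\sigma^{-1})$ and also $R\colon(X,\sigma^{-1})\to(RX,\sigma)$ topological isomorphisms. Chasing these two facts shows that $(X,\sigma)$ is topologically isomorphic to $(X,\sigma^{-1})$ if and only if $(X,\sigma)$ is topologically isomorphic to $(RX,\sigma)$. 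So the first step is to verify that $RX$ is again a non-degenerate rank-1 system and that, if $(v_n)$ is the canonical generating sequence of $X$, then the sequence $(\widetilde{v_n})$ of reversed words (where $\widetilde{w}$ denotes the reverse of a finite word $w$) is the canonical generating sequence of $RX$; this uses that a finite word $w$ occurs in $x$ if and only if $\widetilde{w}$ occurs in $Rx$, together with the fact that every clause in the definition of ``canonical generating sequence'' is invariant under word reversal.

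Next I would translate ``built symmetrically'' into the language of the canonical factorization. Writing $v_{n+1}=v_n1^{a_{n,1}}v_n1^{a_{n,2}}\cdots 1^{a_{n,q_n-1}}v_n$ for the canonical factorization of $v_{n+1}$ over $v_n$, the same identity reversed reads $\widetilde{v_{n+1}}=\widetilde{v_n}1^{a_{n,q_n-1}}\widetilde{v_n}1^{a_{n,q_n-2}}\cdots 1^{a_{n,1}}\widetilde{v_n}$, so $\widetilde{v_{n+1}}$ is built from $\widetilde{v_n}$ with the same cut number $q_n$ but with the spacer vector $(a_{n,1},\dots,a_{n,q_n-1})$ reversed. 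Hence ``$v_{n+1}$ is built symmetrically from $v_n$'', i.e.\ the spacer vector $(a_{n,1},\dots,a_{n,q_n-1})$ is a palindrome, holds exactly when $\widetilde{v_{n+1}}$ is built from $\widetilde{v_n}$ with the identical cut number and spacer vector with which $v_{n+1}$ is built from $v_n$.

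Now I would invoke the characterization behind Theorem~\ref{rank1isom}: two non-degenerate rank-1 systems are topologically isomorphic if and only if their canonical generating sequences eventually agree in their construction data (equal cut numbers and equal spacer vectors from some common stage $N$ on), the isomorphism then being realized, up to a power of $\sigma$, by a replacement scheme that rewrites the stage-$N$ base block of one system as the stage-$N$ base block of the other. For the backward direction of the theorem, assume $v_{n+1}$ is built symmetrically from $v_n$ for all $n\ge N$. By the previous paragraph the canonical generating sequences $(v_n)_{n\ge N}$ of $X$ and $(\widetilde{v_n})_{n\ge N}$ of $RX$ then share their construction data from stage $N$ on, while $|v_N|=|\widetilde{v_N}|$ since reversal preserves length; so the replacement scheme sending $v_N\mapsto\widetilde{v_N}$ yields a stable isomorphism $(X,\sigma)\to(RX,\sigma)$, and composing with $R$ gives $(X,\sigma)\cong(X,\sigma^{-1})$. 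For the forward direction, if $(X,\sigma)\cong(X,\sigma^{-1})$ then $(X,\sigma)\cong(RX,\sigma)$, so the characterization produces an $N$ such that for all $n\ge N$ the word $v_{n+1}$ is built from $v_n$ with the same cut number and spacer vector as $\widetilde{v_{n+1}}$ is built from $\widetilde{v_n}$; comparing this with the reversed-spacer-vector computation forces each such spacer vector to equal its own reversal, i.e.\ $v_{n+1}$ is built symmetrically from $v_n$ for all $n\ge N$.

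The step I expect to be the main obstacle is the appeal to replacement schemes in the backward direction (and its mirror in the forward direction): one must check with care that equality of construction data from a common stage $N$, together with equality of base-block lengths, genuinely yields a stable topological isomorphism, in particular that rewriting occurrences of $v_N$ as occurrences of $\widetilde{v_N}$ is unambiguous and shift-equivariant; this rests on the uniqueness of the canonical factorization and on non-degeneracy, which guarantees that for large $N$ the $v_N$-blocks tile each point of $X$ with only bounded ambiguity. A second, more routine point is confirming that $(\widetilde{v_n})$ is literally the canonical generating sequence of $RX$ and not merely some generating sequence of it, which is where the reversal-invariance of the canonicity conditions is used.
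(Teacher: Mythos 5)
Your proposal follows essentially the same route as the paper: the paper defines the reverse word $\overline{V}$ and the reverse system $\overline{X}$ (your $RX$), observes that $(\overline{X},\sigma)\cong(X,\sigma^{-1})$ and that $(\overline{v_n}:n\in\N)$ is the canonical generating sequence of $\overline{V}$, proves the backward direction by checking that $(v_N,\overline{v_N})$ is a replacement scheme for $V$ and $\overline{V}$, and proves the forward direction by using Corollary~\ref{correplacement} and Lemma~\ref{lemmaRS1} to upgrade an arbitrary replacement scheme to one of the form $(v_n,\overline{v_n})$ and then comparing the two factorizations of $\overline{v_{n+1}}$ over $\overline{v_n}$ (one by reversing, one by the replacement scheme), exactly as you do. One caveat: the ``characterization behind Theorem~\ref{rank1isom}'' you invoke is stated too strongly. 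In general a replacement scheme $(v,w)$ permits $|v|\neq|w|$, in which case the canonical generating sequences of the two systems match up only after an index shift and with spacer vectors differing by the constant $|w|-|v|$, not with ``equal cut numbers and equal spacer vectors from a common stage.'' Your argument survives because for $V$ and $\overline{V}$ one has $|v_k|=|\overline{v_k}|$ for every $k$, which forces the replacement scheme produced by Lemma~\ref{lemmaRS1} to pair $v_n$ with $\overline{v_n}$ at the \emph{same} index; this is precisely the ``$m=n$'' verification the paper performs, and it is the one small step you should make explicit rather than absorb into the quoted characterization.
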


\subsection{Topological conjugacy of minimal systems}
The results of this paper, especially Theorem~\ref{rank1isom}, can also be viewed as a contribution to the study of the topological isomorphism problem for Bernoulli subshifts. In symbolic dynamics the problem is better known as the topological conjugacy problem. The objective is to understand the complexity of the topological isomorphism (or conjugacy) relation when it is restricted to important classes of Bernoulli subshifts. 

Clemens proved in \cite{Clemens1} that topological conjugacy for all Bernoulli subshifts is a universal countable Borel equivalence relation. Gao, Jackson and Seward (\cite{GaoJacksonSeward}), and independently Clemens (\cite{Clemens2}), also showed the same for free Bernoulli subshifts. However, the complexity of  topological conjugacy for minimal subshifts is an open question.  Gao, Jackson and Seward have shown in \cite{GaoJacksonSeward} that $E_0$ is a lower bound for this problem. Theorem~\ref{rank1isom} implies that topological conjugacy for minimal rank-1 systems has the same complexity as $E_0$. 

\subsection{Organization and terminology}
This paper is organized as follows. In Section 2 we define rank-1 words and rank-1 systems, and prove all the preliminary results about them. In particular we give a complete analysis of the collection of all finite words from which a rank-1 word is built and present several applications of this analysis. One of the applications is to define the canonical generating sequence for a rank-1 word.  Another is to define a complete metric on the space $\mathcal{R}$.  In Section 3 we prove the main results of this paper.

We denote the set of all natural numbers, including zero, by $\N$.  

Three types of words appear in this paper.  By {\em bi-infinite word} we mean an element of $\{0,1\}^\Z$.  We will frequently use lower case letters such as $x$, $y$, and $z$ to denote bi-infinite words.  By {\em infinite word} we mean an element of $\{0,1\}^\N$.  We will frequently use upper case letters such as $U$, $V$, and $W$ to denote infinite words.  By {\em finite word} we mean a finite sequence from the alphabet $\{0,1\}$.  We will frequently use lower case letters such as $u$, $v$, and $w$ to denote finite words.  

If $v$ and $w$ are finite words, $vw$ denotes the concatenation of $v$ and $w$.  Likewise, if $v$ is a finite word and $n \in \N$, then $v^n$ denotes the concatenation of $n$ copies of $v$.  We are principally interested in the finite words that begin and end with 0.  Throughout the rest of the paper $\mathcal{F}$ denotes the set of such finite words.

\section{The Polish space of non-degenerate rank-1 systems}

\subsection{Overview}
This section (Section 2) contains the foundational results about rank-1 words and systems that will allow us, in Section 3, to analyze isomorphisms between non-degenerate rank-1 systems and understand the complexity of the isomorphism relation on the space of such systems.  We now give an overview of what the rest of this section contains.

In Section \ref{rank1} we define a rank-1 word and show how it gives rise to a rank-1 system $(X, \sigma)$.  We show that there are three types of rank-1 systems: the system is degenerate if $X$ is finite; it is minimal if $X$ is a Cantor space on which the shift acts minimally; and it is non-minimal if $X$ is a Cantor space with a fixed point (i.e., a point that is fixed by the shift).

We define $\mathcal{R}$ to be the collection of rank-1 words that give rise to non-degenerate rank-1 systems and endow it with a natural topology.  As the map sending an element of $\mathcal{R}$ to the non-degenerate rank-1 system it gives rise to is one-to-one (this is Proposition \ref{proponetoone} and is proved at the very end of Section 2), we can view $\mathcal{R}$ as a space of codes for non-degenerate rank-1 systems.  We show that although there is another reasonable choice for a space of codes for non-degenerate rank-1 systems, these two spaces of codes are Borel isomorphic (with a natural isomorphism).

In Section \ref{secbuiltfrom} we analyze the collection $A_V$ of finite words from which a rank-1 word $V$ is built.  There is a natural partial order $\preceq$ on $A_V$.  We show that $(A_V, \preceq)$ is a lattice and that a certain natural subset of $A_V$ is linearly ordered by $\preceq$.  This certain subset of $A_V$ is infinite if and only if $V \in \mathcal{R}$ (i.e., if the system associated to $V$ is non-degenerate), and in the case that $V \in \mathcal{R}$ it gives rise to the canonical generating sequence of $V$.  (This sequence will be important in understanding the complexity of the isomorphism relation on $\mathcal{R}$ as a Borel equivalence relation.)  We then use our analysis of $A_V$ and this certain subset to define a complete metric on $\mathcal{R}$ that is compatible with its natural topology.  

In Section \ref{expected} we show that certain structural aspects of a non-degenerate rank-1 word $V$ imply similar structural aspects for each element of the non-degenerate rank-1 system $X$.  The key notion is that of an expected occurrence of a finite word $v$ in an infinite word $V$ (or a bi-infinite word $x$).  

If a $V \in \mathcal{R}$ is built from a finite word $v$, then there is a unique way to view $V$ as a collection of disjoint occurrences of $v$ interspersed with 1s; an occurrence of $v$ in $V$ is said to be {\em expected} if it is an element of that collection.  We show that this implies that each element $x$ of the rank-1 system $X$ associated to $V$ also has this property, i.e., it can be viewed in a unique way as a collection of disjoint occurrences of $v$ interspersed with 1s; an occurrence of $v$ in $x$ is {\em expected} if it is an element of that collection.  

The ability to analyze elements of a non-degenerate rank-1 system according to the location of their expected occurrences of $v$ will be an essential tool in the main technical proof of this paper (Theorem \ref{thmreplacement}).

\subsection{Rank-1 words and systems}
\label{rank1}

\subsubsection{Definition of a rank-1 word and its associated rank-1 system}
\begin{definition}  Let $V \in \{0,1\}^\N$.  
\begin{enumerate}
\item [ (a) ] $V$ is {\em built from} $v \in \mathcal{F}$ if there is a sequence $(a_i : i \geq 1)$ of natural numbers so that $V = v 1^{a_1} v 1^{a_2}v \ldots$.  
\item [ (b) ] Let $A_V = \{v \in \mathcal{F} : \textnormal{$V$ is built from $v$}\}$.  
\item [ (c) ] $V$ is {\em rank-1} if $A_V$ is infinite.
\end{enumerate}
\end{definition}  

Remarks:
\begin{enumerate}
\item  If $V$ is built from any $v \in \mathcal{F}$, then $V$ has infinitely many occurrences of 0.
\item  If $V$ is rank-1, then for each $n \in \N$, there is some $v \in A_V$ with $|v| \geq n$.
\item  If $V$ is rank-1, then for each $n \in \N$, there is some $v \in A_V$ with at least $n$ occurrences of 0.
\end{enumerate}

Let $V$ be a rank-1 word.  We construct a dynamical system $(X, \sigma)$ as follows.  Let $$X = \{x \in \{0,1\}^\Z : \textnormal{every finite subword of $x$ is a subword of $V$}\}$$  It is clear that $X \subseteq \{0,1\}^\Z$ is closed.  We define the shift $\sigma : X \rightarrow X$ by $\sigma (x) (i) = x(i+1)$.  It is clear that $\sigma$ is a homeomorphism of $X$.  We call $(X, \sigma)$ the {\em rank-1 system} associated to $V$.  

Though not necessary for the work done in this paper, we mention that there is a canonical measure associated to each rank-1 system $(X, \sigma)$; one defines $\mu (\{x  \in X : x (0) = 0\}) = 1$ and shows that this extends uniquely to a $\sigma$-finite, shift-invariant measure $\mu$ on $X$.  A few comments about this canonical measure will be made after the proof of Proposition \ref{prop1}.

\subsubsection{Three types of rank-1 words (and rank-1 systems)}

Some rank-1 words give rise to completely uninteresting rank-1 systems.  Proposition \ref{prop1} below shows that three distinct types of rank-1 words give rise to three distinct types of rank-1 systems.  First, however, we need a small proposition and a corollary.

\begin{proposition}
\label{propfirst}
Suppose $V$ is a rank-1 word and $(X, \sigma)$ the rank-1 system associated to $V$.  If $x \in X$ contains an occurrence of 0, then every finite subword of $V$ occurs in $x$.
\end{proposition}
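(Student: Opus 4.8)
The plan is to reduce the statement to the claim that \emph{every} $v \in A_V$ occurs as a subword of $x$, and then to prove that claim by a direct ``window'' argument around an occurrence of $0$ in $x$.

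\emph{The reduction.} Let $w$ be an arbitrary finite subword of $V$. Then $w$ occurs within some initial segment $V(0)V(1)\cdots V(N)$ of $V$. Since $A_V$ is infinite and there are only finitely many finite words of each length, $A_V$ contains words of arbitrarily large length, so I may fix $v \in A_V$ with $|v| > N$. As $V$ is built from $v$, we have $V = v 1^{a_1} v 1^{a_2} v \cdots$, so $v$ is an initial segment of $V$, and since $|v| > N$ it contains $V(0)\cdots V(N)$, hence contains $w$ as a subword. Thus, once we know that every element of $A_V$ occurs in $x$, it follows that $w$ does too, which is exactly what we want.

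\emph{The window argument.} Fix $v \in A_V$ and, using the hypothesis, fix $k \in \Z$ with $x(k) = 0$. Consider the finite subword $u = x(k-|v|)x(k-|v|+1)\cdots x(k+|v|)$ of $x$; it has length $2|v|+1$, and its central symbol is $x(k) = 0$. By the definition of $X$, $u$ is a subword of $V$, say $u = V(p)V(p+1)\cdots V(p+2|v|)$, so $V(p+|v|) = x(k) = 0$. Writing $V = v 1^{a_1} v 1^{a_2} v \cdots$, every occurrence of $0$ in $V$ lies inside one of the displayed copies of $v$, since the blocks $1^{a_i}$ consist only of $1$s. Hence $V(p+|v|)$ lies in a copy of $v$ occupying positions $s, s+1, \ldots, s+|v|-1$ of $V$, and the offset $o := (p+|v|) - s$ satisfies $0 \le o \le |v|-1$. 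Then $s = p+|v|-o \ge p$ and $s + |v| - 1 = p + 2|v| - 1 - o \le p + 2|v|$, so the whole block $V(s)\cdots V(s+|v|-1) = v$ lies within $V(p)\cdots V(p+2|v|) = u$. Therefore $v$ is a subword of $u$, and hence of $x$.

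The only point that requires any care is the index bookkeeping in the last step: one must check that a window of radius $|v|$ about the chosen occurrence of $0$ is wide enough to contain whichever copy of $v$ that $0$ belongs to, for every possible offset $o \in \{0, 1, \ldots, |v|-1\}$. Everything else follows immediately from the definitions of $A_V$, of $X$, and of ``built from.''
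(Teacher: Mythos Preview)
Your proof is correct and follows essentially the same approach as the paper's: reduce to showing that every $v\in A_V$ occurs in $x$, then use that any occurrence of $0$ in $V$ lies inside one of the displayed copies of $v$. The paper compresses your window argument into the single sentence ``It follows that every occurrence of $0$ in any $x\in X$ is part of an occurrence of $v$ in $x$,'' whereas you spell out the index bookkeeping explicitly; this is the same idea, just with more detail.
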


\begin{proof}
Let $\alpha$ be a finite subword of $V$.  There is some $v$ from which $V$ is built that contains an occurrence of $\alpha$.  We know that every occurrence of 0 in $V$ is a part of an occurrence of $v$ in $V$.  It follows that every occurrence of 0 in any $x \in X$ is a part of an occurrence of $v$ in $x$.  Thus, if $x \in X$ contains an occurrence of 0, then it contains an occurrence of $v$ and, therefore, an occurrence of $\alpha$.
\end{proof}

\begin{corollary}
\label{cor}
Let $(X, \sigma)$ be a rank-1 system and suppose $x \in X$ contains an occurrence of 0.
\begin{enumerate}
\item [ (a) ] The orbit of $x$ is dense in $X$.
\item [ (b) ] There are infinitely many occurrences of 0 in $x$.
\end{enumerate}
\end{corollary}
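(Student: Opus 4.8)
The plan is to obtain both parts as quick consequences of Proposition~\ref{propfirst}, which tells us that as soon as $x \in X$ contains a single occurrence of $0$, \emph{every} finite subword of $V$ already appears somewhere in $x$.

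For part (a), fix an arbitrary $y \in X$ and an arbitrary $n \in \N$; it suffices to produce some $i \in \Z$ for which $\sigma^i(x)$ and $y$ agree on the coordinates $-n, -n+1, \ldots, n$, since the sets of bi-infinite words agreeing with $y$ on such a window form a neighborhood basis at $y$. The finite word $\beta = y(-n)\,y(-n+1)\cdots y(n)$ is a finite subword of $y$, hence a subword of $V$ by the definition of $X$. By Proposition~\ref{propfirst}, $\beta$ occurs in $x$, say with its first letter at coordinate $j$, so that $x(j+n+k) = y(k)$ for all $k$ with $-n \le k \le n$. Then $i = j+n$ works, and we conclude that the orbit of $x$ is dense in $X$.

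For part (b), recall the first remark following the definition of a rank-1 word: since $V$ is built from some $v \in \mathcal{F}$, and every element of $\mathcal{F}$ begins and ends with $0$, the word $V$ has infinitely many occurrences of $0$. Consequently, for each $n \in \N$ there is an initial segment of $V$ --- a single finite subword of $V$ --- containing at least $n$ occurrences of $0$. By Proposition~\ref{propfirst} that finite subword occurs in $x$, so $x$ contains at least $n$ occurrences of $0$; letting $n \to \infty$ gives the claim.

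I do not expect a genuine obstacle here: all the substance is already in Proposition~\ref{propfirst}, and the only things needing a little care are the index bookkeeping in part (a) and, in part (b), the observation that the infinitely many $0$s of $V$ can all be located inside one finite subword (namely a long enough prefix).
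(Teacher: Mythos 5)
Your proof is correct and follows exactly the route the paper intends: the corollary is stated immediately after Proposition~\ref{propfirst} with no separate argument, and your write-up simply supplies the routine details (the cylinder-neighborhood bookkeeping for density, and long prefixes of $V$ for the infinitude of $0$s) that the authors leave implicit.
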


\begin{proposition}
\label{prop1}
Let $V$ be a rank-1 word and $(X, \sigma)$ the rank-1 system associated to $V$.
\begin{enumerate}
\item  [ (a) ] If there is a word $v$ so that $V = vvv \ldots$, then $X$ is finite. 
\item  [ (b) ]  If there is no word $v$ so that $V = vvv \ldots$, but there is some $n\in \N$ so that $1^n$ is not a subword of $V$, then $(X, \sigma)$ is a Cantor minimal system.
\item  [ (c) ]  If for each $n \in \N$, the word $1^n$ is a subword of $V$, then $X$ is a Cantor space with exactly one periodic point, the bi-infinite word that is constantly 1.
\end{enumerate}
\end{proposition}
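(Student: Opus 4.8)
The plan is to prove the three statements separately, each time using the defining fact that a finite word can occur in some $x\in X$ only if it occurs in $V$, together with Proposition~\ref{propfirst} (a partial converse, once $x$ is known to contain a $0$) and Corollary~\ref{cor}. For (a), if $V=vvv\cdots$ I would first observe that the finite subwords of $V$ are exactly the finite subwords of the bi-infinite word $w$ of period $|v|$ obtained by concatenating copies of $v$ in both directions: any finite subword of $V$ lies inside a prefix $v^{k}$ and hence occurs in $w$, and conversely any length-$\ell$ subword of $w$ already occurs in $v^{\ell}$, a prefix of $V$. A short argument then shows $X$ is precisely the (finite) set of shifts of $w$: if every length-$(|v|+1)$ subword of a bi-infinite word is a subword of $w$, then symbols $|v|$ apart must agree (as they do in every such window of $w$), so the word has period $|v|$ and is a shift of $w$. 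Hence $X$ is finite and nonempty.

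For (b), $X$ is automatically nonempty (a standard compactness argument), compact, metrizable and zero-dimensional, so it suffices to establish minimality and perfectness. Since $1^{n}$ is not a subword of $V$, the constant word $\mathbf{1}$ is not in $X$; as $\mathbf{1}$ is the only $0$-free bi-infinite word, every $x\in X$ contains a $0$, so by Corollary~\ref{cor}(a) every orbit is dense and $(X,\sigma)$ is minimal. To see $X$ is infinite, suppose not; then $X$ is a finite union of periodic orbits, so there is a single period $p$ common to all of its elements, and (using Proposition~\ref{propfirst}, since some --- hence every --- element of $X$ carries all of $V$'s subwords) every length-$2p$ subword of $V$ has period $p$; thus $V(i)=V(i+p)$ for all $i$ and $V=u^{\infty}$ with $u=V(0)\cdots V(p-1)$, contradicting the hypothesis of (b). Finally, an infinite minimal system has no isolated points: an isolated point would, under the homeomorphism $\sigma$, force its whole (dense) orbit to consist of isolated points, making $X$ discrete and hence finite. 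So $X$ is a Cantor space on which $\sigma$ acts minimally.

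For (c), the hypothesis gives $\mathbf{1}\in X$, and $\mathbf{1}$ is a fixed point; moreover it is the \emph{only} periodic point, since a periodic point $x\ne\mathbf{1}$ contains a $0$, hence by periodicity a $0$ in every window of length $p$, hence no run of $p$ ones, contradicting (via Proposition~\ref{propfirst}) that every $1^{n}$ occurs in $x$. The substantive task is to show $X$ (again nonempty, compact, zero-dimensional) is perfect, and here there is no minimality to lean on. The engine of the argument is the observation that for any $w\in A_{V}$, writing $V=w1^{c_{1}}w1^{c_{2}}\cdots$, we must have $\sup_{i}c_{i}=\infty$ (otherwise $V$ would have only boundedly long runs of $1$s, since those inside $w$ are also bounded, contradicting the hypothesis), so every aligned copy of $w$ --- and, letting $w\in A_{V}$ be arbitrarily long, every prefix of $V$ --- occurs in $V$ preceded by arbitrarily long runs of $1$s. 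From this one extracts: (i) the bi-infinite word $\,{}^{\infty}1\cdot V$ (the one-sided word $V$ with $1$s prepended) lies in $X$; (ii) $\,{}^{\infty}1\cdot V$ is not periodic, so its shifts are pairwise distinct and $\sigma^{-k}(\,{}^{\infty}1\cdot V)\to\mathbf{1}$, witnessing that $\mathbf{1}$ is not isolated. For $x\ne\mathbf{1}$ I would split into cases: if $x$ is not eventually $1$ on the left, placing an occurrence of the subword $x[-m,m]$ of $V$ suitably exhibits a shift of $\,{}^{\infty}1\cdot V$ agreeing with $x$ on $[-m,m]$ yet distinct from $x$; and if $x$ is eventually $1$ on the left (but $x\ne\mathbf{1}$), the same ``long run before an aligned long $w\in A_{V}$'' fact forces $x$ itself to be a shift of $\,{}^{\infty}1\cdot V$, which is then approximated by its own shifts realigned at longer and longer runs. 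Hence $X$ is perfect, so it is a Cantor space with unique periodic point $\mathbf{1}$.

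I expect the main obstacle to be the perfectness in case (c): unlike (b) there is no minimality available, so one must both identify exactly which points of $X$ fail to have infinitely many $0$s on the left (precisely the shifts of $\,{}^{\infty}1\cdot V$) and exploit the unboundedness of the gaps $c_{i}$ to manufacture, for every point and every window, a distinct nearby point in $X$. By comparison, the periodicity argument underlying infinitude in case (b) is routine.
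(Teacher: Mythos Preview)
Your argument is correct, but it diverges from the paper's in several places. For (a), you explicitly identify $X$ with the finite orbit of the periodic word by a period-forcing argument on length-$(|v|+1)$ windows; the paper instead observes that the bi-infinite periodic word lies in $X$, has a dense orbit by Corollary~\ref{cor}, and has a finite orbit, so $X$ is finite. For (b), you deduce perfectness abstractly from minimality plus infinitude (the latter via a neat periodicity contradiction for $V$); the paper proves perfectness directly and uniformly by finding, for any cylinder $U_{\alpha,i}$, two distinct $a,b$ with $v1^a0,\,v1^b0$ subwords of $V$ (using non-periodicity), and then shifting a single point to exhibit two elements of the cylinder. The main difference is in (c): the paper reuses exactly this ``two-followers'' argument, now applied to shifts of the special point $z={}^{\infty}1\cdot V$ (which contains every subword of $V$), to show in one stroke that \emph{no} point of $X$ is isolated. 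Your proof instead splits into three cases (the fixed point; $x$ not eventually $1$ on the left, approximated by shifts of $z$; $x$ eventually $1$ on the left, which you show must itself be a shift of $z$ and hence not isolated via its own dense infinite orbit). Both routes are valid; the paper's is shorter and more uniform across (b) and (c), while yours yields the extra structural fact (not needed for the proposition, but proved later in the paper as Lemma~\ref{lemmauniquefirst}) that any $x$ with a first $0$ is a shift of ${}^{\infty}1\cdot V$.
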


\begin{proof}
Proof of (a):  Suppose $V = vvv \ldots$  It is clear that if $x \in \{0,1\}^\Z$ is of the form $( \ldots v v v \ldots)$, then $x \in X$.  Since such an $x$ contains an occurrence of 0, we know by Corollary \ref{cor} that its orbit is dense.  But its orbit is finite, so $X$ must also be finite. 

Proof of (b):  Suppose that there is no word $v$ so that $V= vvv \ldots$, but there is an $n\in \N$ so that $1^n$ is not a subword of $V$.  

We claim that $X$ is non-empty.  Consider the bi-infinite word $y \in \{0,1\}^\Z$ defined by $y(i) = V(i)$ for $i \geq 0$ and $y(i) = 1$ for $i <0$.  Then consider the sequence of bi-infinite words $(\sigma^n (y) : n \in \N)$.  Since $\{0,1\}^\Z$ is compact, this sequence must have a limit point in $\{0,1\}^\Z$, and it is easy to check that this limit point must be in $X$.

We now know that $X$ is a non-empty, closed subset of $\{0,1\}^\Z$.  To show that $X$ is a Cantor space, it suffices to show that $X$ has no isolated points.  To show that the system $(X, \sigma)$ is minimal, it suffices to show that the orbit of each $x \in X$ is dense in $X$.  

Note that since each subword of $V$ of length $n$ contains an occurrence of 0, each subword of each $x \in X$ of length $n$ contains an occurrence of 0.  Thus, each $x \in X$ contains an occurrence of 0 and, by Corollary \ref{cor}, the orbit of each $x \in X$ is dense.

To show that $X$ has no isolated points, it suffices to show that if an element of $X$ has an occurrence of a finite word $\alpha$ beginning at $i$, then at least two distinct elements of $X$ have an occurrence of $\alpha$ beginning at $i$.  Suppose $x \in X$ has an occurrence of $\alpha$ beginning at $i$.  The finite word $\alpha$ must be a subword of $V$ and hence a subword of some $v$ from which $V$ is built.  Since $V$ is built from $v$, but $V$ is not periodic, we can find distinct $a, b \in \N$ so that $v1^a0$ and $v1^b0$ are subwords of $V$.  Since $x$ must contain an occurrence of 0, Proposition \ref{propfirst} implies that both $v1^a0$ and $v1^b0$ are subwords of $x$.  By applying appropriate powers of the shift to $x$, one gets two distinct elements of the orbit of $x$ that have an occurrence of $\alpha$ beginning at $i$.

Proof of (c):  Suppose that for each $n \in \N$, the word $1^n$ is a subword of $V$.  It is clear that $X$ contains the bi-infinite word that is constantly 1, and that this is a fixed point of $\sigma$.  We need to show that $X$ has no isolated points (and hence, is a Cantor space) and that every element of $X$ that has an occurrence of 0 is not periodic.  

Consider the bi-infinite word $z \in \{0,1\}^\Z$ defined by $z(i) = V(i)$ for $i \geq 0$ and $z(i) = 1$ for $i <0$. We claim that $z \in X$.  Every finite subword of $z$ that ends before position 0 is of the form $1^n$ and so is a subword of $V$.  Every finite subword of $z$ that begins after position -1 is clearly a subword of $V$.  It remains to show that each finite subword of $z$ that begins before 0 and ends after -1 is a subword of $V$.  It suffices to show that if $V$ is built from $v$, then for arbitrarily large $n \in \N$, the word $1^n v $ is a subword of $V$.  But if $V$ is built from $v$ and $n > |v|$, then $V$ must have an occurrence of $1^n$ and it must be followed by $v$ (this is because $V$ is built from $v$ and you cannot have the word $1^n$ be in the middle an occurrence of $v$, when $n > |v|$).  Thus $z \in X$.

We claim that $X$ has no isolated points.  Let $x \in X$.  To show that $x$ is not isolated, it suffices to show that if $x$ has an occurrence of a finite word $\alpha$ beginning at $i$, then there are two distinct elements of $X$ that have an occurrence of $\alpha$ beginning at $i$.  Suppose that $x$ has an occurrence of $\alpha$ beginning at $i$.  The finite word $\alpha$ must be a subword of $V$ and hence a subword of some $v$ from which $V$ is built.  Since $V$ is built from $v$ and each $1^n$ is a subword of $V$, we can find distinct $a, b \in \N$ so that $v1^a0$ and $v1^b0$ are subwords of $V$.  We know by Proposition \ref{propfirst} that both $v1^a0$ and $v1^b0$ are subwords of $z$.  There are thus two distinct elements of the orbit of $x$ that have an occurrence of $\alpha$ beginning at $i$.  Therefore, $x$ is not an isolated point.

We claim that every $x \in X$ that has an occurrence of 0 is not periodic.  Let $x \in X$ have an occurrence of 0.  By Corollary \ref{cor}, the orbit of $x$ is dense in $X$.  Since the bi-infinite word that is constantly 1 is in $X$ but not in the orbit of $x$, the orbit of $x$ must be infinite.  Therefore, $x$ is not periodic.
\end{proof}

Remarks:
\begin{enumerate}
\item  If a rank-1 word satisfies the hypothesis of (a), or a rank-1 system satisfies the conclusion of (a), then we say it is {\em degenerate}.  In this case, the canonical measure $\mu$ has atoms. 
\item  If a rank-1 word satisfies the hypothesis of (b), or a rank-1 system satisfies the conclusion of (b), then we say it is {\em minimal}.   In this case, the canonical measure $\mu$ is atomless and finite; normalizing, $(X, \mu, \sigma)$ is a measure-preserving transformation. 
\item  If a rank-1 word satisfies the hypothesis of (c), or a rank-1 system satisfies the conclusion of (c), then we say it is {\em non-minimal}. In this case, the canonical measure $\mu$ is atomless and could be finite or infinite.  
\end{enumerate}

\begin{corollary}
\label{cor1}
If $(X, \sigma)$ is a non-degenerate rank-1 system, then each $x \in X$ that contains an occurrence of 0 is not periodic.
\end{corollary}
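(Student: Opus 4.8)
The plan is to deduce this directly from Corollary~\ref{cor} and Proposition~\ref{prop1}. The key observation is that ``non-degenerate'' means precisely that we are in case (b) or case (c) of Proposition~\ref{prop1}, and in both of those cases $X$ is a Cantor space, hence infinite. On the other hand, Corollary~\ref{cor}(a) tells us that any $x \in X$ containing an occurrence of $0$ has dense orbit, and a dense orbit cannot be finite when the ambient space is infinite. So the corollary should follow by combining these two facts.

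In detail, I would argue as follows. Let $x \in X$ contain an occurrence of $0$, and suppose toward a contradiction that $x$ is periodic, say $\sigma^p(x) = x$ for some $p \geq 1$. Then the orbit $O = \{\sigma^n(x) : n \in \Z\} = \{x, \sigma(x), \dots, \sigma^{p-1}(x)\}$ is finite. A finite subset of a Hausdorff space is closed, so $O = \overline{O}$; but by Corollary~\ref{cor}(a) we have $\overline{O} = X$, whence $X = O$ is finite. Since $(X,\sigma)$ is non-degenerate, Proposition~\ref{prop1} places us in case (b) or case (c), and in either case $X$ is a Cantor space, in particular infinite. This contradiction shows $x$ is not periodic.

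There is essentially no obstacle here: the corollary is just a repackaging of results already in hand. One could equally well split into the two non-degenerate cases directly — in case (c) the final paragraph of the proof of Proposition~\ref{prop1} proves exactly this statement, and in case (b) the minimality of $(X,\sigma)$ together with $X$ being infinite immediately forbids periodic points (indeed in case (b) \emph{every} element of $X$ contains an occurrence of $0$, so no element is periodic). The only point worth spelling out is that the definition of ``non-degenerate'' (namely, $V$ is not of the form $vvv\cdots$, equivalently $X$ is infinite) does land us in case (b) or (c) and never in case (a).
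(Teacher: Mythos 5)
Your proof is correct and follows essentially the route the paper intends: the corollary is stated without proof precisely because it is immediate from Corollary~\ref{cor}(a) (dense orbit) together with the fact that a non-degenerate system falls under case (b) or (c) of Proposition~\ref{prop1}, so $X$ is a Cantor space and in particular infinite. Indeed, the paper's own proof of case (c) of Proposition~\ref{prop1} already uses exactly your ``dense orbit in an infinite space cannot be finite'' argument.
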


\subsubsection{The space of non-degenerate rank-1 words}
\label{Setuplast}
Let $\mathcal{R}^\prime$ denote the set of rank-1 words and $\mathcal{R}$ denote the set of non-degenerate rank-1 words.  For $v \in \mathcal{F}$, let $$A^v = \{V \in \mathcal{R}^\prime : \textnormal{$V$ is built from $v$}\} = \{V \in \mathcal{R}^\prime : v \in A_V\}$$  Let $T^\prime$ be the topology on $\mathcal{R}^\prime$ generated by $\{A^v : v \in \mathcal{F}\}$.  Let $T$ be the topology that $\mathcal{R}$ inherits as a subset of $\mathcal{R}^\prime$ (with topology $T^\prime$).  Since $\mathcal{F}$ is countable, $(\mathcal{R}, T)$ is separable.  In Section \ref{secmetric}, we will describe a complete metric on $\mathcal{R}$ that is compatible with $T$; hence, $(\mathcal{R}, T)$ is a Polish space.  

There is, in fact, another way to see that $(\mathcal{R}, T)$ is Polish, which we now briefly describe.  One can check that $\mathcal{R}^\prime$ is an $F_{\sigma \delta}$ subset of $\{0,1\}^\N$.  There is a standard way to give a Polish topology to an $F_{\sigma \delta}$ subset of a Polish space; it involves refining the topology on the original Polish space in order to make the specified subset $G_\delta$ in the new topology, hence making it Polish (see, for example, \cite{Kechris} Lemmas 13.2 and 13.3).  One can check that in our case this procedure gives precisely the topology $T^\prime$ to $\mathcal{R}^\prime$.  Since $\mathcal{R}^\prime \setminus \mathcal{R}$ is countable, $\mathcal{R}$ is a $G_\delta$ subset of $\mathcal{R}^\prime$ and hence is Polish with the induced topology, which is $T$.

\subsubsection{The Borel isomorphism between two coding spaces}
Each non-degenerate rank-1 word is associated to a unique non-degenerate rank-1 system (and vice versa, by Proposition \ref{proponetoone}).  Thus we can view $\mathcal{R}$ as a (Polish) space of codes for all non-degenerate rank-1 systems.


There is, however, another standard coding for non-degenerate rank-1 systems as Bernoulli subshifts.  Consider
$$ K(\{0,1\}^\Z)=\{ X\subseteq \{0,1\}^\Z\,:\,\mbox{ $X$ is a closed (compact) subset of $\{0,1\}^\Z$}\}. $$
With the Hausdorff metric or the Vietoris topology, $K(\{0,1\}^\Z)$ becomes a Polish space (see, for example, \cite{Kechris} Theorem 4.25). Consider further
$$ \mathcal{S}=\{X\in K(\{0,1\}^\Z)\,:\, \mbox{$X$ is invariant under the shift}\}. $$
Then $\mathcal{S}$ is a closed subspace of $K(\{0,1\}^\Z)$, and therefore also a Polish space. Intuitively, the space $\mathcal{S}$ is the space of all Bernoulli subshifts. Let $\mathcal{R}^*$ be the subspace of $\mathcal{S}$ consisting of all non-degenerate rank-1 systems. Then we will verify below that $\mathcal{R}^*$ is a Borel subset of $\mathcal{S}$, and therefore a standard Borel space.

To see that $\mathcal{R}^*$ is Borel, consider the map $\Phi: \mathcal{R}\to\mathcal{S}$ defined as
$$ \Phi(V)=\{x\in\{0,1\}^\Z\,:\, \mbox{every subword of $x$ is a subword of $V$}\}, $$
and note that $\mathcal{R}^*$ is exactly the range of $\Phi$.
It is easy to verify that $\Phi$ is a Borel map. By Proposition~\ref{proponetoone} it is also one-to-one. By a theorem of Luzin--Suslin (see, for example, \cite{Kechris} Theorem 15.1 and Corollary 15.2), we conclude that $\mathcal{R}^*$ is Borel and that $\Phi$ is in fact a Borel isomorphism between $\mathcal{R}$ and $\mathcal{R}^*$.

\subsection{Finite words from which a specified rank-1 word is built}
\label{secbuiltfrom}
We will be interested in the structure of the set of finite words from which a specified rank-1 word is built.    

\begin{definition}
Let $v, w \in \mathcal{F}$. 
\begin{enumerate}
\item [ (a) ]  We say $w$ is {\em built from} $v$, and write $v \preceq w$, if there exists $r \geq 1$ and $a_1, a_2, \ldots a_{r-1} \in \N$ so that $w = v 1^{a_1} v 1^{a_2} v \ldots v 1^{a_{r-1}} v$.  
\item [ (b) ] If, moreover, $a_1 = a_2 = \ldots = a_{r-1}$, then we say $w$ is {\em built simply} from $v$ and write $v \preceq_s w$.
\end{enumerate}
\end{definition}

When $v \preceq w$, but $v \neq w$, we write $v \prec w$.  It is easy to check that that the relation $\preceq$ is a partial order on $\mathcal{F}$, i.e., it is reflexive, anti-symmetric, and transitive.  While the relation $\preceq_s$ is not transitive, we do have the following simple but useful lemma, which we state without proof.  Figure 1 illustrates the lemma.

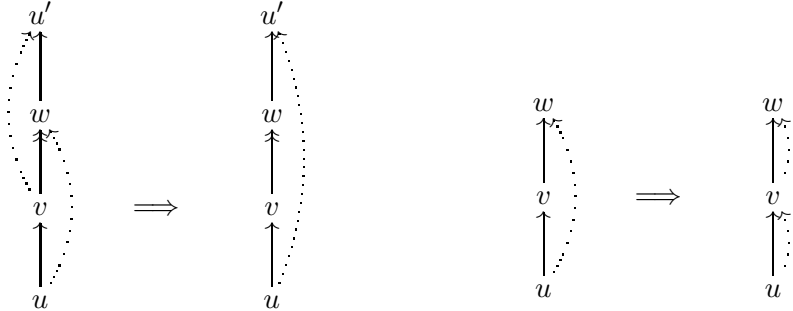
\begin{figure}[b]
$\xymatrix{
u' 		&  & u'\\ 
w \ar[u] &  & w\ar[u]\\ 
v \ar@{->>}[u] \ar@{.>}@/^1pc/[uu] &  \Longrightarrow  & v\ar@{->>}[u]\\ 
u \ar[u] \ar@{.>}@/_1pc/[uu] &  &u\ar[u] \ar@{.>}@/_1pc/[uuu]}$ 
\ \ \ \ \ \ \ \ \ \ \ \ \ \ \ \ \ \ \ \ \ \ \ 
$\xymatrix{& & \\ 
w & & w \\ 
v \ar[u] & \Longrightarrow & v \ar[u] \ar@{.>}@/_/[u] \\ 
u \ar[u] \ar@{.>}@/_1pc/[uu] & & u\ar[u] \ar@{.>}@/_/[u]}$

\caption{Lemma~\ref{lemmasimply} (a) and (b): solid arrows stand for $\preceq$, 
two-head arrows for $\prec$, and 
dotted arrows for $\preceq_s$.
}
\end{figure}

\begin{lemma}
\label{lemmasimply}
Let $u, v,w, u^\prime \in U$.   
\begin{enumerate}
\item [ (a) ]  If $ u \preceq v \prec w \preceq u^\prime$ and $u \preceq_s w$ and $v \preceq _s u^\prime$, then $u \preceq_s u^\prime$.
\item [ (b) ]  If $u \preceq v \preceq w$ and $u \preceq_s w$, then $u \preceq_s v$ and $v \preceq_s w$.
\end{enumerate}
\end{lemma}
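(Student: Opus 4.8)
The plan is to reduce both parts to a single elementary fact: \emph{if $u\in\mathcal{F}$ and $w$ is built from $u$, then the representation $w=u1^{a_1}u1^{a_2}\cdots u1^{a_{r-1}}u$ is unique} --- both the number $r$ of copies of $u$ and the gap sequence $(a_1,\dots,a_{r-1})$ are determined by $u$ and $w$. I would prove this first, by induction on $r$. The case $r=1$ is immediate from $|w|=|u|$. For $r\ge 2$: in any such representation the first $|u|$ symbols of $w$ form the initial copy of $u$, which ends in $0$, and the symbols of $w$ in positions $|u|,|u|+1,\dots$ begin with exactly $a_1$ ones followed by a $0$ (the start of the next copy of $u$); hence $a_1$ equals the length of the run of $1$s of $w$ starting at position $|u|$, a quantity that does not refer to the representation. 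Deleting the prefix $u1^{a_1}$ leaves a word built from $u$ with $r-1$ copies, and the induction hypothesis finishes. Equivalently, the decomposition of $w$ into copies of $u$ is the greedy one.

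Granting this, here is part (b). Fix representations $w=u1^{a}u1^{a}\cdots u$ (witnessing $u\preceq_s w$, with gap $a$), $v=u1^{b_1}u\cdots u1^{b_{q-1}}u$ (witnessing $u\preceq v$), and $w=v1^{c_1}v\cdots v1^{c_{p-1}}v$ (witnessing $v\preceq w$). Substituting the second into the third exhibits $w$ as a concatenation of $pq$ copies of $u$ separated by $1$-blocks, with gap sequence $b_1,\dots,b_{q-1},c_1,b_1,\dots,b_{q-1},c_2,\dots,c_{p-1},b_1,\dots,b_{q-1}$; since every one of these $1$-blocks is flanked by copies of $u$ (which start and end with $0$), no two of them merge, so this really is a representation of $w$ built from $u$. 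By the uniqueness fact it must coincide with $w=u1^{a}u1^{a}\cdots u$, so every entry of that gap sequence is $a$. Thus all $b_j=a$, i.e.\ $v=u1^{a}u\cdots u$ and $u\preceq_s v$; and all $c_i=a$, i.e.\ $w=v1^{a}v\cdots v$ and $v\preceq_s w$. (The degenerate cases $q=1$, where $v=u$, and $p=1$, where $w=v$, cause no trouble.)

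For part (a): from $u\preceq v\preceq w$ and $u\preceq_s w$, part (b) gives $u\preceq_s v$ and $v\preceq_s w$, and the argument just given shows both hold with the \emph{same} gap $a$ that witnesses $u\preceq_s w$; so $v=u1^{a}u\cdots u$ and $w=v1^{a}v\cdots v$, and the latter has at least two copies of $v$ because $v\prec w$. Now fix representations $u'=v1^{a'}v\cdots v$ (witnessing $v\preceq_s u'$) and $u'=w1^{d_1}w\cdots w$ (witnessing $w\preceq u'$). Substituting $w=v1^{a}v\cdots v$ into the latter exhibits $u'$ as built from $v$ with a gap sequence containing at least one entry equal to $a$ --- namely an internal $v$-gap of some copy of $w$ inside $u'$, which exists since $w\preceq u'$ forces at least one copy of $w$ into $u'$ and each copy of $w$ splits into at least two copies of $v$. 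By uniqueness this representation is the uniform one $u'=v1^{a'}v\cdots v$, so $a=a'$. Finally, substituting $v=u1^{a}u\cdots u$ into $u'=v1^{a}v\cdots v$ exhibits $u'$ as built from $u$ with every gap equal to $a$, i.e.\ $u\preceq_s u'$.

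The only real subtlety is the uniqueness fact above and, relatedly, pinning down exactly where the strictness $v\prec w$ enters part (a): it is used precisely to guarantee that $w$ (hence every copy of $w$ sitting inside $u'$) contains at least one internal $v$-gap, which is what forces $a=a'$; without strictness the conclusion of (a) is false, as $u=0$, $v=w=00$, $u'=00100$ shows. Everything else is routine concatenation bookkeeping, including the repeated observation that substituting one representation into another never merges two $1$-blocks, since each is bounded on both sides by a copy of a word in $\mathcal{F}$.
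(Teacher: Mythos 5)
Your proof is correct, and in fact the paper states Lemma~\ref{lemmasimply} without proof, so there is no argument of the authors' to compare against; your write-up supplies the missing details. The key observation you isolate --- that a word in $\mathcal{F}$ built from $u$ admits a \emph{unique} decomposition into copies of $u$ separated by blocks of $1$s (forced by the greedy reading, since each copy of $u$ begins and ends with $0$) --- is exactly the right engine: it makes both parts routine bookkeeping about substituted gap sequences, and your counterexample $u=0$, $v=w=00$, $u'=00100$ correctly pinpoints why the strict inequality $v\prec w$ is indispensable in part (a).
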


\subsubsection{The lattice $(A_V, \preceq)$}  
\label{seclattice}
For the rest of Section \ref{seclattice}, fix a rank-1 word $V$.  Recall that $A_V = \{v \in \mathcal{F}: \textnormal{$V$ is built from $v$}\}$.  Since $V$ is rank-1, $A_V$ is infinite.  Since $(\mathcal{F}, \preceq)$ is a partial order, so is $(A_V, \preceq)$.  

Our next objective is to show that $(A_V, \preceq)$ is a lattice.  Recall that a partial order is a lattice if every two elements have a least upper bound and a greatest lower bound.  The greatest lower bound of of two elements $v$ and $w$ is called the {\em meet} of $v$ and $w$ and is denoted by $v \wedge w$.  The least upper bound of $v$ and $w$ is called the {\em join} and is denoted by $v \vee w$.

Let $(i_n : n \in \N$) enumerate, in an order-preserving way, the positions of the occurrences of 0 in $V$.  Since $V$ is rank-1, $V$ begins with an occurrence of 0.  Therefore, $i_0 = 0$.  Define $L_V: \N_{>0} \rightarrow \N$ by $L_V(n) = i_n - i_{n-1} -1$.  Note that $L_V(n)$ is the number of 1s in $V$ between the occurrence of 0 at position $i_{n-1}$ and the occurrence of 0 at position $i_n$.  When $V$ is clear from the context, we write $L$ for $L_V$.

We state the following Lemma without proof.

\begin{lemma}
\label{lemmacharbuilt}
Let  $v = 0 1^{a_1} 0 1^{a_2} \ldots 1^{a_{r-1} }0 $ and $w =  0 1^{b_1} 0 1^{b_2} \ldots 1^{b_{s-1} }0 $.  
\begin{enumerate}
\item [ (a) ]  $v \in A_V$ iff for $0 < k < r$ and $i \equiv k \mod r$, $L_V(i) = a_k$.
\item [ (b) ]  If $v,w \in A_V$, then $v \preceq w$ iff $s$ is a multiple of $r$.
\end{enumerate}
\end{lemma}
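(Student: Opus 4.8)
The plan is to prove part (a) directly from the definition of ``built from'' by reading off the gap function $L_V$, and then to deduce part (b) from part (a): one direction of (b) is a bare zero-count, and the other is a regrouping of the zeros of $w$ into blocks of $r$.

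For (a), write $v = 01^{a_1}0\cdots1^{a_{r-1}}0$, so $v$ has $r$ occurrences of $0$ and $r-1$ internal gaps. Suppose first $v\in A_V$, say $V = v1^{c_1}v1^{c_2}v\cdots$. Then the occurrences of $0$ in $V$ fall into consecutive blocks of length $r$ (the zeros of the successive copies of $v$); inside a block the gaps, read left to right, are $a_1,\dots,a_{r-1}$, and the gap between the $j$-th and $(j{+}1)$-st block is $c_j$. Unwinding the definition $L_V(n) = i_n - i_{n-1} - 1$, this says exactly that $L_V(i) = a_k$ whenever $i\equiv k\pmod r$ with $0<k<r$, which is the ``only if'' direction. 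Conversely, assuming that condition, I would read $V$ from the left: since $V$ is rank-1 it has infinitely many $0$s, and $i_0 = 0$, so the first $r$ zeros together with the intervening blocks $1^{L_V(1)} = 1^{a_1},\dots,1^{L_V(r-1)} = 1^{a_{r-1}}$ spell out $v$; setting $c_j := L_V(jr)$ and continuing the same reading over the next $r$ zeros, and so on, shows $V = v1^{c_1}v1^{c_2}v\cdots$, i.e. $v\in A_V$.

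For (b), the implication $v\preceq w\Rightarrow r\mid s$ needs nothing from (a): if $w$ is a concatenation of $t\ge 1$ copies of $v$ separated by blocks of $1$s, then each copy of $v$ contributes $r$ of the $s$ zeros of $w$ and the $1$-blocks contribute none, so $s = tr$. For the converse, I would apply (a) twice: applying (a) to $w = 01^{b_1}0\cdots1^{b_{s-1}}0$ gives $L_V(j) = b_j$ for $0<j<s$ (take $i=j$), and applying (a) to $v$ gives $L_V(j) = a_{j\bmod r}$ whenever $r\nmid j$; hence $b_j = a_{j\bmod r}$ for all $j$ with $0<j<s$ and $r\nmid j$. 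Now assume $r\mid s$, write $s = mr$, and group the zeros of $w$ into $m$ consecutive blocks of $r$. By the previous sentence each block, with its internal gaps, spells out $01^{a_1}0\cdots1^{a_{r-1}}0 = v$, and consecutive blocks are separated by $1^{b_{\ell r}}$ for $\ell = 1,\dots,m-1$; thus $w = v1^{b_r}v1^{b_{2r}}v\cdots v1^{b_{(m-1)r}}v$, so $v\preceq w$.

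I do not anticipate a genuine obstacle; the whole argument is elementary bookkeeping. The only point that needs care is the indexing: in the congruence $i\equiv k\pmod r$ the residue $k=0$ is exceptional (it records an inter-copy gap of $V$, not an internal gap of $v$), so one has to keep ``$v$ has $r$ zeros and $r-1$ gaps'' consistently aligned with ``$L_V$ is indexed from $1$''. The trivial case $r=1$ (here $v = 0$, the condition in (a) is vacuous, and $0\preceq w$ always) is subsumed by the general argument.
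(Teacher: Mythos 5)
Your proof is correct. The paper states Lemma~\ref{lemmacharbuilt} without proof, and your argument --- reading off $L_V$ from the block structure for (a), counting zeros for one direction of (b), and regrouping the zeros of $w$ into blocks of $r$ via two applications of (a) for the other --- is exactly the routine bookkeeping the authors evidently had in mind, with the $k=0$ residue and the $r=1$ case handled correctly.
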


We now show that $(A_V, \preceq)$ is a lattice.

\begin{proposition}
\label{proplattice}
If $v, w \in A_V$, then there exist $u, u^\prime \in A_V$ such that
\begin{enumerate}
\item [ (a) ] $u \preceq v \preceq u^\prime$ and $u \preceq w \preceq u^\prime$;
\item [ (b) ] if $\alpha \in A_V$ with $v \preceq \alpha $ and $w \preceq \alpha $, then $u^\prime \preceq \alpha$; and
\item [ (c) ] if $\alpha \in A_V$ with $\alpha \preceq v$ and $\alpha \preceq w$, then $\alpha \preceq u$.
\end{enumerate}
\end{proposition}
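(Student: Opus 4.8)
The plan is to reduce the whole statement to elementary arithmetic of the number of $0$'s appearing in words, via Lemma~\ref{lemmacharbuilt}. For $\alpha\in A_V$ write $|\alpha|_0$ for the number of occurrences of $0$ in $\alpha$; by Lemma~\ref{lemmacharbuilt}(b), for $\alpha,\beta\in A_V$ one has $\alpha\preceq\beta$ iff $|\alpha|_0$ divides $|\beta|_0$. Setting $r=|v|_0$ and $s=|w|_0$, the natural candidates are the element $u'\in A_V$ with $|u'|_0=\operatorname{lcm}(r,s)$ (this will be the join) and the element $u\in A_V$ with $|u|_0=\gcd(r,s)$ (the meet), each unique by Lemma~\ref{lemmacharbuilt}(a). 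Granting that these two words exist in $A_V$, parts (a)--(c) follow at once from Lemma~\ref{lemmacharbuilt}(b): (a) holds since $r,s\mid\operatorname{lcm}(r,s)$ and $\gcd(r,s)\mid r,s$; for (b), any common upper bound $\alpha\in A_V$ of $v$ and $w$ has $|\alpha|_0$ a common multiple of $r$ and $s$, hence a multiple of $|u'|_0=\operatorname{lcm}(r,s)$, so $u'\preceq\alpha$; and (c) is dual. So the only real task is to produce $u,u'\in A_V$ with the prescribed numbers of $0$'s.

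For this I will invoke the characterization in Lemma~\ref{lemmacharbuilt}(a): there is a word in $A_V$ with exactly $q$ occurrences of $0$ (and it is then forced to be $01^{L_V(1)}01^{L_V(2)}0\cdots 1^{L_V(q-1)}0$) precisely when the gap function $L_V$ is constant on every residue class $k\bmod q$ with $0<k<q$. So everything reduces to two periodicity statements about $L_V$.

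For the join, take $q=\operatorname{lcm}(r,s)$ and fix $i$ with $i\not\equiv 0\pmod q$. If $i\not\equiv 0\pmod r$, then every $j\equiv i\pmod q$ also satisfies $j\equiv i\pmod r$ and $j\not\equiv 0\pmod r$, so $L_V(j)=L_V(i)$ because $v\in A_V$. If instead $r\mid i$, then necessarily $s\nmid i$ (otherwise $\operatorname{lcm}(r,s)\mid i$), and the same argument applies with $s$ and $w$ in place of $r$ and $v$. A two-line case split, and $u'$ exists.

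For the meet, take $q=\gcd(r,s)$, fix $k$ with $0<k<q$, and let $S_k=\{\,j\in\N_{>0}:j\equiv k\pmod q\,\}$. Since $q$ divides both $r$ and $s$ while $q\nmid j$ for $j\in S_k$, every $j\in S_k$ satisfies $j\not\equiv 0\pmod r$ and $j\not\equiv 0\pmod s$; hence $v\in A_V$ gives $L_V(j)=L_V(j+r)$ and $w\in A_V$ gives $L_V(j)=L_V(j+s)$ for all $j\in S_k$. It remains to deduce that $L_V$ is constant on $S_k$, and this connectivity step is where I expect the main (though still routine) work. Given $j,j_0\in S_k$, the difference $j-j_0$ is a multiple of $\gcd(r,s)$, so for $M$ large the integer $(j-j_0)+Mrs$ is a large non-negative multiple of $\gcd(r,s)$ and therefore can be written as $\alpha r+\beta s$ with $\alpha,\beta\ge 0$ (every sufficiently large multiple of $\gcd(r,s)$ is a non-negative integer combination of $r$ and $s$). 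Then $j+Mrs=j_0+\alpha r+\beta s$, and repeatedly applying the two invariances along $S_k$ — adding $\beta$ copies of $s$ and then $\alpha$ copies of $r$ starting from $j_0$, and separately using that $L_V$ is $rs$-periodic on $S_k$ to see $L_V(j+Mrs)=L_V(j)$ — yields $L_V(j)=L_V(j_0)$; all intermediate indices lie in $S_k$, so they stay positive and avoid multiples of $r$ and of $s$, which is exactly what makes the invariance identities applicable. This produces $u\in A_V$ with $|u|_0=\gcd(r,s)$ and finishes the proof.
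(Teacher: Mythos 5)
Your proposal is correct and follows essentially the same route as the paper: both take $u$ and $u^\prime$ to be the candidate words with $\gcd(r,s)$ and $\operatorname{lcm}(r,s)$ occurrences of $0$, reduce their membership in $A_V$ to constancy of $L_V$ on residue classes via Lemma~\ref{lemmacharbuilt}(a), and deduce (a)--(c) from the divisibility criterion of Lemma~\ref{lemmacharbuilt}(b). The only (harmless) divergence is in the connectivity step for the meet: the paper links two indices in the same class mod $\gcd(r,s)$ in one stroke using a Bezout identity $\gcd(r,s)=mr-ns$ with $m,n\in\N$, whereas you route through the fact that every sufficiently large multiple of $\gcd(r,s)$ is a non-negative combination $\alpha r+\beta s$ --- both are valid and equally elementary.
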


\begin{proof}
Let $v = 0 1^{a_1} 0 1^{a_2} \ldots 01^{a_{r-1}}0$ and $w =  0 1^{b_1} 0 1^{b_2} \ldots 1^{b_{s-1} }0$ and suppose that $v, w \in A_V$.

We will define $u$ and $u^\prime$ and show that $u,u^\prime \in A_V$.  Let $t = \gcd(r,s)$.  For $0 < k < t$, let $c_k = L(k)$.  Now let $u = 0 1^{c_1} 0 1^{c_2}0 \ldots 01^{c_{t-1} }0$.  Let $t^\prime = \operatorname{lcm}(r,s)$.  For $0 < k < t^\prime$, let $d_k = L(k)$.  Now let $u^\prime = 0 1^{d_1} 0 1^{d_2}0 \ldots 01^{d_{t^\prime-1} }0$.

We claim that $u \in A_V$.   By part (a) of Lemma \ref{lemmacharbuilt}, it suffices to show that for $0 < k < t$ and $i \equiv k \mod t$, $L(i) = L(k)$.  Suppose $i \equiv k \mod t$ and $0 < k < t$.  Choose $l \in \N$ so that $i = k + lt$.  Since $t = \gcd (r,s)$, there are $m,n \in \N$ such that $t = mr - ns$.  This implies that $i + lns = k + lmr$.  Since $i \not\equiv 0 \mod s$, we know that $L$ is constant on the congruence class of $i$ (mod $s$).  Thus, $L(i) = L(i + lns)$.  Since $k \not\equiv 0 \mod r$, we know that $L$ is constant on the congruence class of $k$ (mod $r$).  Thus, $L(k) = L(k + lmr)$.  Since $i + lns = k + lmr$, we have that $L(i) = L(k)$.

 We claim that $u^\prime \in A_V$.  By part (a) of Lemma \ref{lemmacharbuilt}, it suffices to show that for $0 < k < t^\prime$ and $i \equiv k \mod t^\prime$, $L(i) = L(k)$.  Suppose $i \equiv k \mod t^\prime$ and $0 < k < t^\prime$.  Since $k \not\equiv 0 \mod t^\prime$ and $t^\prime = \operatorname{lcm} (r,s)$, either $k \not\equiv 0 \mod r$ or $k \not\equiv 0 \mod s$.  If $k \not\equiv 0 \mod r$, then $L$ is constant on the congruence class of $i$ (mod $r$).  Thus, $L(i) = L(k)$.  If $k \not\equiv 0 \mod s$, then $L$ is constant on the congruence class of $i$ (mod $s$).  Thus, $L(i) = L(k)$.  

The proofs of (a), (b), and (c) follow immediately from the definition of $u$ and $u^\prime$ and part (b) of Lemma \ref{lemmacharbuilt}.
\end{proof}

We have one more proposition whose proof is closely connected with the proof of Proposition \ref{proplattice}. 

\begin{proposition} 
\label{propincomparable}
 If $v, w \in A_V$ are incomparable, then:
\begin{enumerate}
\item [ (a) ] $(v \wedge w) \prec v \prec (v \vee w)$;
\item [ (b) ] $(v \wedge w) \prec w \prec (v \vee w)$; and
\item [ (c) ] $(v \wedge w) \preceq_s (v \vee w)$.
\end{enumerate}
If, moreover, $V = v 1^a v 1^a v \ldots$, then $V = (v \wedge w) 1^a (v \wedge w) 1^a (v \wedge w) \ldots$.
\end{proposition}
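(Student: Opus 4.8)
The plan is to translate every statement about $\preceq$, $\wedge$, and $\vee$ into elementary divisibility facts about numbers of occurrences of $0$, using Lemma~\ref{lemmacharbuilt}. Write $v = 01^{a_1}0\cdots 01^{a_{r-1}}0$ and $w = 01^{b_1}0\cdots 01^{b_{s-1}}0$, so $v$ has $r$ and $w$ has $s$ occurrences of $0$. From the construction in the proof of Proposition~\ref{proplattice}, $v \wedge w$ has $t := \gcd(r,s)$ occurrences of $0$, $v \vee w$ has $t' := \operatorname{lcm}(r,s)$ occurrences of $0$, and in both cases the gap lengths are read off from $L = L_V$. Since $v$ and $w$ are incomparable, Lemma~\ref{lemmacharbuilt}(b) gives $r \nmid s$ and $s \nmid r$; writing $r = tr'$ and $s = ts'$, we get $\gcd(r',s') = 1$ and $r', s' \ge 2$. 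The facts I use repeatedly, both from Lemma~\ref{lemmacharbuilt}(a): since $v \in A_V$, the function $L$ is constant on every congruence class modulo $r$ other than the class of $0$, and since $w \in A_V$, $L$ is constant on every congruence class modulo $s$ other than the class of $0$.

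Parts (a) and (b) are immediate. As $v \wedge w$ is a lower bound and $v \vee w$ an upper bound of $\{v,w\}$ in $A_V$, we have $v \wedge w \preceq v, w \preceq v \vee w$; and $v \wedge w = v$ would force $v \preceq w$, while $v \vee w = v$ would force $w \preceq v$, each contradicting incomparability (the same argument with the roles of $v$ and $w$ exchanged gives (b)).

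For part (c): since $t \mid t'$, Lemma~\ref{lemmacharbuilt}(b) gives $v \wedge w \preceq v \vee w$, so $v \vee w = (v \wedge w)1^{e_1}(v \wedge w)1^{e_2}\cdots 1^{e_{q-1}}(v \wedge w)$ with $q = t'/t = r's'$, and a count of the occurrences of $0$ in $v \vee w$ shows $e_j = L(jt)$ for $1 \le j \le q-1$. So it suffices to show $L(jt) = L(t)$ for every $j \in \{1,\dots,r's'-1\}$. Fix such a $j$; since $\gcd(r',s') = 1$ and $0 < j < r's'$, either $r' \nmid j$ or $s' \nmid j$. Suppose $r' \nmid j$. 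By the Chinese Remainder Theorem choose $j'$ with $j' \equiv j \pmod{r'}$ and $j' \equiv 1 \pmod{s'}$; then $r' \nmid j'$ and $s' \nmid j'$. Now $jt \equiv j't \pmod{r}$ and neither is a multiple of $r$, so $L(jt) = L(j't)$; and $j't \equiv t \pmod{s}$ and neither is a multiple of $s$ (using $s' \ge 2$), so $L(j't) = L(t)$. The case $s' \nmid j$ is symmetric. Hence every $e_j$ equals $L(t)$, which is exactly $v \wedge w \preceq_s v \vee w$.

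For the final assertion, suppose $V = v1^av1^av\cdots$; this says in particular that $L(n) = a$ whenever $r \mid n$, and the claim to prove is precisely that $L(n) = a$ whenever $t \mid n$, i.e.\ $L(jt) = a$ for all $j \ge 1$. Given $j \ge 1$: if $s' \nmid j$, use the Chinese Remainder Theorem to pick $j_0$ with $j_0 \equiv j \pmod{s'}$ and $r' \mid j_0$; then $L(jt) = L(j_0 t)$, since neither $jt$ nor $j_0 t$ is a multiple of $s$, and $L(j_0 t) = a$ since $r \mid j_0 t$. If $r' \mid j$ and $s' \mid j$, then $r \mid jt$ directly. If $s' \mid j$ but $r' \nmid j$, pick $j_1$ with $j_1 \equiv j \pmod{r'}$ and $j_1 \equiv 1 \pmod{s'}$ (so $s' \nmid j_1$); then $L(jt) = L(j_1 t)$ and the first case applies to $j_1$. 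In every case $L(jt) = a$, as desired. The one point requiring care throughout is that at each appeal to the Chinese Remainder Theorem the chosen representative must avoid the relevant multiples of $r$ or of $s$, so that the constancy statements from Lemma~\ref{lemmacharbuilt}(a) legitimately apply; this is exactly where $r', s' \ge 2$ and $\gcd(r',s') = 1$ are needed, and it is the main thing to keep track of.
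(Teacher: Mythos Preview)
Your proof is correct. The core idea for part (c) is the same as the paper's: to show $L(jt)=L(t)$, produce an intermediate index that is congruent to $jt$ modulo one of $r,s$ (in a nonzero class) and to $t$ modulo the other, then apply the constancy coming from Lemma~\ref{lemmacharbuilt}(a) twice. You locate this intermediate with the Chinese Remainder Theorem; the paper locates it with a B\'ezout identity $t=mr-ns$ and the explicit value $i+lns=t+lmr$. These are two phrasings of the same arithmetic, so the arguments are essentially the same.

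The only place your write-up diverges is the final clause. The paper handles it by reusing part (c): once $(v\wedge w)\preceq_s (v\vee w)$ is known, the values $L(jt)$ for $0<jt<t'$ are all equal, and since $V$ is built from $v\vee w$ the constancy propagates to all $jt\not\equiv 0\bmod t'$; plugging in $j=r'$ (so $jt=r$, which is a multiple of $t$ but not of $t'$) pins the common value to $a$. Your direct three-case CRT argument gets to the same conclusion without invoking (c), at the cost of a little more bookkeeping. Either route is fine; the paper's is slightly shorter because it leverages what was just proved.
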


\begin{proof}  (a) and (b) are obvious.  We now prove (c).

Let $v, w \in A_V$ be incomparable.  We adopt the notation given in the proof of Proposition \ref{proplattice}.  To show that $u^\prime = v \vee w$ is built simply from $u = v \wedge w$, it suffices to show that if $0 < i < t^\prime$ and $i \equiv 0 \mod t$, then $L(i) = L(t)$.  

Since $v$ and $w$ are incomparable, $t \neq r$ and $t \neq s$.  Suppose that  $0 < i < t^\prime$ and $i \equiv 0 \mod t$.  We want to show that $L(i) = L(t)$.  We know that $i \not\equiv 0 \mod t^\prime$.  Therefore, since $t^\prime = \operatorname{lcm} (r,s)$, we have that either $i \not\equiv 0 \mod r$, or $i \not\equiv 0 \mod s$.  Without loss of generality, assume that $i \not\equiv 0 \mod s$. 

Suppose $i \not\equiv 0 \mod s$.  Let $l \in \N$ be such that $i = t + lt$.  Let $m,n \in \N$ be such that $t = mr -ns$.  This implies that $i + lns = t + lmr$.  Since $i \not\equiv 0 \mod s$, we know that $L$ is constant on the congruence class of $i \mod s$.  Thus, $L(i) = L(i + lns)$.  Since $t = \gcd (r,s)$, but $t \neq r$, we know that $t \not\equiv 0 \mod r $, which implies that $L$ is constant on the congruence class of $t \mod r$.  Thus, $L(t) = L(t + lmr)$.  Since $i + lns = t + lmr$, we have that $L(i) = L(t)$.

We have now proved (c), i.e., that $(v \wedge w) \preceq_s (v \vee w)$.    

Now suppose that $V = v 1^a v 1^a \ldots$.  To show that $V =(v \wedge w) 1^a (v \wedge w)1^a \ldots$, it suffices to show that if $i \equiv 0 \mod t$, then $L(i) = a$.

We know that if $i \equiv 0 \mod r$, then $L(i) = a$.  Thus, if $i \equiv 0 \mod t^\prime$, then $i \equiv 0 \mod r$ (since $t^\prime$ is a multiple of $r$) and hence, $L(i) =a$.

But we also know that $u \preceq_s u^\prime$, and this implies that $L(i)$ is the same for all $i$ satisfying $i \equiv 0 \mod t$ and $0 < i < t^\prime$.  Since $V$ is built from $u^\prime$, $L(i)$ is the same for all $i$ satisfying $i \equiv 0 \mod t$ and $i \not\equiv0 \mod t^\prime$.  We know that $r \equiv 0 \mod t$ and $r \not\equiv 0 \mod t^\prime$ and $L(r) = a$.  Thus, if $i \equiv 0 \mod t$ and $i \not\equiv 0 \mod t^\prime$, then $L(i) = a$.

Together, the last two paragraphs show that if $i \equiv 0 \mod t$, then $L(i) = a$.
\end{proof}

\begin{corollary}
\label{cordegeneratesimple}
Suppose $V$ is degenerate and that $V= v1^a v1^a v \ldots$, with $|v|$ as small as possible.  If $w \in A_V$ and $|v| \leq |w|$, then $w = v1^a v1^a v \ldots v 1^a v$ and, therefore, $v \preceq_s w$. 
\end{corollary}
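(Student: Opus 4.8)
The plan is to show that, given the minimality of $|v|$, every $w \in A_V$ is forced to be $\preceq$-comparable with $v$, and then to read the exact shape of $w$ off the function $L_V$. So fix $w \in A_V$ with $|v| \le |w|$.

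First I would rule out that $v$ and $w$ are incomparable in the lattice $(A_V, \preceq)$. If they were, Proposition~\ref{propincomparable} would give $(v \wedge w) \prec v$, so $v \wedge w$ has strictly fewer occurrences of $0$ than $v$ and hence $|v \wedge w| < |v|$; moreover, since $V = v 1^a v 1^a v \ldots$, the last sentence of Proposition~\ref{propincomparable} gives $V = (v \wedge w) 1^a (v \wedge w) 1^a (v \wedge w) \ldots$. But then $v \wedge w \in \mathcal{F}$ would be a word shorter than $v$ from which $V$ is built simply, with the same gap $a$, contradicting the minimality of $|v|$. Hence $v$ and $w$ are comparable. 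If $w \preceq v$ then $|w| \le |v|$, so the hypothesis $|v| \le |w|$ forces $|w| = |v|$, and antisymmetry of $\preceq$ gives $w = v$, whence the conclusion is trivial.

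The remaining — and main — case is $v \preceq w$. Writing $v = 0 1^{a_1} 0 \cdots 1^{a_{r-1}} 0$ and $w = 0 1^{b_1} 0 \cdots 1^{b_{s-1}} 0$, Lemma~\ref{lemmacharbuilt}(b) gives $r \mid s$, say $s = mr$. From $v \in A_V$ together with the hypothesis $V = v 1^a v 1^a v \ldots$, Lemma~\ref{lemmacharbuilt}(a) shows that $L_V$ is periodic of period $r$, with $L_V(k) = a_k$ for $0 < k < r$ and $L_V(k) = a$ whenever $r \mid k$. Since $w \in A_V$, Lemma~\ref{lemmacharbuilt}(a) applied to $w$ gives $b_k = L_V(k)$ for $0 < k < s$. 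Substituting the period-$r$ description of $L_V$ shows that the gap sequence $(b_1, \dots, b_{s-1})$ of $w$ is $(a_1, \dots, a_{r-1}, a)$ repeated $m-1$ times followed by $(a_1, \dots, a_{r-1})$; that is, $w = v 1^a v 1^a \cdots 1^a v$ with $m$ copies of $v$, and in particular $v \preceq_s w$.

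I expect the only genuine computation to be in the last step, and there it is purely index bookkeeping: lining up the period of $L_V$ against the positions of the $0$s of $w$ and checking that the gaps of $w$ landing on multiples of $r$ are exactly those forced to equal $a$. The conceptual point — and the one place where the minimality of $|v|$ is really used — is the elimination of the incomparable case. Without that hypothesis the statement is false: for $V = 010101\ldots$, the non-minimal choice $v = 01010$ with $a = 1$ has $w = 0101010 \in A_V$ with $|w| \ge |v|$ but with $v$ and $w$ incomparable, so $v \not\preceq_s w$.
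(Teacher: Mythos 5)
Your proof is correct and follows exactly the route the paper intends (the corollary is stated without proof as a consequence of Proposition~\ref{propincomparable}): you use the final sentence of that proposition together with the minimality of $|v|$ to rule out incomparability, and then read the gap structure of $w$ off $L_V$ via Lemma~\ref{lemmacharbuilt}. The closing counterexample correctly confirms that the minimality hypothesis is essential.
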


\subsubsection{The total order $(B_V, \preceq)$}
\label{sectotalorder}

\begin{definition}  Let $V \in \{0,1\}^\N$.  
\begin{enumerate}
\item [ (a) ] $V$ is {\em fundamentally} built from $v$ if $v \in A_V$ and for all $u,u^\prime \in A_V$ with $u \prec v \prec u^\prime$, $u \not\preceq_s u^\prime$.
\item [ (b) ] Let $B_V = \{v \in U: \textnormal{$V$ is fundamentally built from $v$}\}$.  
\end{enumerate}
\end{definition}  

Note that if $V\in \mathcal{R}^\prime$, then $0\in B_V$.

For the rest of Section \ref{sectotalorder}, fix a rank-1 word $V$.  We have two important propositions about $B_V$.

\begin{proposition}
\label{propsubset}
Every element of $B_V$ is comparable to every element of $A_V$.
\end{proposition}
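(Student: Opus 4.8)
The plan is to show that if $v \in B_V$ and $w \in A_V$, then $v$ and $w$ are comparable, arguing by contradiction using the lattice structure established in Proposition~\ref{proplattice} and Proposition~\ref{propincomparable}. So suppose $v \in B_V$ and $w \in A_V$ are incomparable. Then the meet $u = v \wedge w$ and join $u' = v \vee w$ both belong to $A_V$, and by Proposition~\ref{propincomparable}(a) we have $u \prec v \prec u'$, while by Proposition~\ref{propincomparable}(c) we have $u \preceq_s u'$. But these two facts together say precisely that $V$ is \emph{not} fundamentally built from $v$: we have produced $u, u' \in A_V$ with $u \prec v \prec u'$ and $u \preceq_s u'$, contradicting $v \in B_V$. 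Hence no such $w$ exists, and $v$ is comparable to every element of $A_V$.

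The key steps, in order, are: (1) take an arbitrary $v \in B_V$ and $w \in A_V$ and assume toward contradiction that they are incomparable; (2) invoke Proposition~\ref{proplattice} to get $v \wedge w, v \vee w \in A_V$ with $v \wedge w \preceq v \preceq v \vee w$ and $v \wedge w \preceq w \preceq v \vee w$; (3) since $v$ and $w$ are incomparable, $v \neq v \wedge w$ and $v \neq v \vee w$, so in fact $v \wedge w \prec v \prec v \vee w$ (this is Proposition~\ref{propincomparable}(a)); (4) apply Proposition~\ref{propincomparable}(c) to conclude $v \wedge w \preceq_s v \vee w$; (5) observe that the pair $(v \wedge w, v \vee w)$ witnesses the failure of the defining condition for $v \in B_V$, which is the desired contradiction.

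I do not anticipate a genuine obstacle here, since essentially all the work has been front-loaded into the lattice propositions; the only point requiring a moment's care is step (3), making sure that incomparability of $v$ and $w$ genuinely forces both strict inequalities $v \wedge w \prec v$ and $v \prec v \vee w$ — but this is immediate, because $v \wedge w = v$ would force $v \preceq w$ and $v \vee w = v$ would force $w \preceq v$, either of which contradicts incomparability. Thus the proof is a short deduction, and the real content is simply recognizing that the definition of $B_V$ is exactly engineered so that elements of $B_V$ cannot sit "transversally" to the lattice $A_V$ in the way an incomparable pair would require.
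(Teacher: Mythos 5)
Your proposal is correct and is essentially identical to the paper's proof: both argue by contradiction, apply Proposition~\ref{propincomparable} to an incomparable pair to get $(v \wedge w) \prec v \prec (v \vee w)$ with $(v \wedge w) \preceq_s (v \vee w)$, and observe that this contradicts $v \in B_V$. The extra care you take in step (3) about strictness is fine but is already packaged into Proposition~\ref{propincomparable}.
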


\begin{proof}
Suppose, towards a contradiction that $v \in B_V$ and $w \in A_V$ are not comparable.  Then, by Proposition \ref{propincomparable}, $(v \wedge w) \prec v \prec (v \vee w)$ with $(v \wedge w) \preceq_s (v \vee w)$.  This is a contradiction with $v \in B_V$.  
\end{proof}

\begin{corollary}
\label{corBtotalorder}
$(B_V, \preceq)$ is a total order.
\end{corollary}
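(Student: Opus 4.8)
The plan is to observe that the hard work has already been done in Proposition~\ref{propsubset}, so that this corollary is pure bookkeeping. Recall that a total (linear) order is just a partial order in which any two elements are comparable. Since $\preceq$ is a partial order on $\mathcal{F}$ (reflexive, antisymmetric, transitive), its restriction to any subset, in particular to $B_V$, is again a partial order; hence the only thing that needs checking is comparability of arbitrary pairs in $B_V$.

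First I would note that, straight from the definition, $B_V\subseteq A_V$: the condition ``$V$ is fundamentally built from $v$'' explicitly demands $v\in A_V$. Then, given any two elements $v,w\in B_V$, I would apply Proposition~\ref{propsubset} to the pair consisting of $v\in B_V$ and $w\in A_V$ (the latter using $B_V\subseteq A_V$), which yields that $v$ and $w$ are comparable under $\preceq$. Since $v$ and $w$ were arbitrary elements of $B_V$, every two elements of $B_V$ are comparable, and combined with the inherited antisymmetry and transitivity this shows that $(B_V,\preceq)$ is a total order.

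There is essentially no obstacle here: all the substantive content (that an incomparable pair $v,w$ would give $(v\wedge w)\prec v\prec(v\vee w)$ with $(v\wedge w)\preceq_s(v\vee w)$, contradicting $v\in B_V$) lives in Propositions~\ref{propincomparable} and~\ref{propsubset}. The only minor care needed is to unwind the definition of $B_V$ so as to see that membership in $B_V$ really does entail membership in $A_V$, which is then exactly what lets Proposition~\ref{propsubset} apply to a pair of elements of $B_V$.
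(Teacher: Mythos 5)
Your proof is correct and is exactly the argument the paper intends: the corollary follows immediately from Proposition~\ref{propsubset} once one notes $B_V\subseteq A_V$ (which is built into the definition of ``fundamentally built from'') and that the partial-order axioms are inherited from $(\mathcal{F},\preceq)$. No further comment is needed.
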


\begin{proposition}
\label{propBinfinite}
The set $B_V$ is infinite iff $V \in \mathcal{R}$ (i.e., iff $V$ is non-degenerate).
\end{proposition}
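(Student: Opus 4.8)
The plan is to prove both directions using the lattice structure of $(A_V, \preceq)$ established in Proposition \ref{proplattice} together with Lemma \ref{lemmasimply}. For the direction that $B_V$ infinite implies $V \in \mathcal{R}$, I would argue contrapositively: if $V$ is degenerate, then $V = v1^a v1^a v\ldots$ for some shortest $v$ and some $a$, and Corollary \ref{cordegeneratesimple} says every $w \in A_V$ with $|w| \geq |v|$ satisfies $v \preceq_s w$. Hence for any such $w$ strictly above $v$ and any $u$ strictly below $w$, one can fit $u \prec v$ or $u = v$ and still get $u \preceq_s w$ via Lemma \ref{lemmasimply}(b) (since $v \preceq_s w$ forces $u \preceq_s v$ and $v \preceq_s w$ when $u \preceq v \preceq w$); a short bookkeeping argument then shows only finitely many elements of $A_V$ can lie in $B_V$ — in fact $B_V \subseteq \{u \in A_V : |u| < |v|\}$ up to the borderline cases, which is finite. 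So $B_V$ is finite.

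For the converse, suppose $V \in \mathcal{R}$, so $V$ is non-degenerate and $A_V$ is infinite; I want to produce infinitely many elements of $B_V$. The key idea is to build $B_V$ greedily from the bottom. We have $0 \in B_V$ always (noted after the definition). Given $v \in B_V$, I want to find the "next" element of $B_V$ above $v$. Consider the collection of $w \in A_V$ with $v \prec w$; this is nonempty and, by Proposition \ref{propsubset} applied with the roles examined carefully, everything above $v$ in $A_V$ is linearly ordered (being comparable to $v$ and, inductively, fitting into the tower). Among all $w$ with $v \prec w$ and $v \preceq_s w$, either there is no such $w$ — in which case $v$ itself being in $B_V$ with nothing problematic above means we look at the immediate successor differently — or there is a largest such "simply built" extension; in the latter case that largest $w$ is the next element of $B_V$, because anything strictly between $v$ and $w$ is simply built from $v$ (Lemma \ref{lemmasimply}(b)) hence fails the $B_V$ condition, while $w$ itself is not simply built from $v$ by maximality, so no pair $u \prec w \prec u'$ with $u \preceq_s u'$ can straddle $w$ using $v$. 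The point is that non-degeneracy guarantees this process never terminates: if it did, we would reach some $v \in B_V$ above which the entire tower in $A_V$ is built simply from $v$ with a fixed gap word $1^a$, and then the analysis in Proposition \ref{propincomparable} (the "moreover" clause) combined with the infinitude of $A_V$ would force $V = v1^a v1^a v\ldots$, contradicting non-degeneracy.

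The main obstacle I anticipate is making the greedy-successor construction precise — specifically, verifying that the candidate "next" element $w$ genuinely lies in $B_V$, which requires checking the defining condition against \emph{all} pairs $u \prec w \prec u'$ in $A_V$, not just those with $u = v$. Here one uses that every element of $A_V$ below $w$ is comparable to $v$ (so is either $\preceq v$ or strictly between $v$ and $w$), handles the two cases with Lemma \ref{lemmasimply}, and uses transitivity-type arguments for $\preceq_s$ across the chain. The other delicate point is the termination argument: one must show cleanly that an infinite "simply built with constant gap" tail in $A_V$ actually propagates to all of $V$, for which the displayed conclusion of Proposition \ref{propincomparable} (that incomparable $v, w$ with $V = v1^a v 1^a \ldots$ force $V = (v\wedge w)1^a(v\wedge w)\ldots$) is not directly applicable since the tower is linearly ordered; instead one argues directly from Lemma \ref{lemmacharbuilt}(a) that if arbitrarily long $w \in A_V$ all satisfy $v \preceq_s w$ with the \emph{same} gap $1^a$, then $L_V$ is eventually — hence everywhere, by the congruence structure — constant equal to $a$, giving degeneracy.
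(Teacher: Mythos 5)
Your overall strategy is close to the paper's (in particular, taking a maximal simple extension in the non-degenerate direction is exactly what the paper does), but both halves have genuine gaps as written. In the degenerate direction, to show $w \notin B_V$ you must exhibit a \emph{straddling} pair $u, u' \in A_V$ with $u \prec w \prec u'$ and $u \preceq_s u'$; your argument only produces relations of the form $u \preceq_s w$ with $u$ below $w$, which does not violate the defining condition for $w \in B_V$. The missing ingredient is the upper witness: since $V = v1^a v1^a \cdots$, the word $w1^aw$ lies in $A_V$ and strictly above $w$, and Corollary~\ref{cordegeneratesimple} gives $v \preceq_s w1^aw$, so the pair $(v,\, w1^aw)$ does the job. (Also, your parenthetical use of Lemma~\ref{lemmasimply}(b) runs in the wrong direction: part (b) deduces the inner relations from the outer relation $u \preceq_s w$, not conversely, and $u \preceq v$ together with $v \preceq_s w$ does not in general yield $u \preceq_s w$.)

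In the non-degenerate direction, your assertion that everything in $A_V$ above $v$ is linearly ordered is false: $(A_V,\preceq)$ is only a lattice, and by Lemma~\ref{lemmacharbuilt}(b) the order mirrors divisibility of the number of occurrences of $0$, so incomparable elements above $v$ certainly can occur. Proposition~\ref{propsubset} gives only that $v$ itself, once known to lie in $B_V$, is comparable to every element of $A_V$ --- which, to your credit, is all your verification actually needs, since every $u \prec w$ is then either $\preceq v$ or strictly between $v$ and $w$; run as an induction starting from $0 \in B_V$, this legitimately sidesteps the incomparable case that the paper must handle via Proposition~\ref{propincomparable} (the paper instead shows that an \emph{arbitrary} $v \in A_V$ has some $w \in B_V$ above it, which immediately gives arbitrarily long elements of $B_V$). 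But two steps remain unresolved in your version: the phrase ``$w$ itself is not simply built from $v$ by maximality'' is backwards ($w$ \emph{is} simply built from $v$; maximality says nothing strictly above $w$ is), and, more seriously, the stalling case in which $v$ has no proper simple extension is only waved at (``look at the immediate successor differently''). In that case you must actually prove that the shortest element of $A_V$ strictly above $v$ lies in $B_V$, again splitting on where $u$ sits relative to $v$; without this the greedy process can fail to produce a new element and the infinitude of $B_V$ does not follow. Your termination argument via Lemma~\ref{lemmacharbuilt}(a) (a cofinal chain of simple extensions of $v$ with a common gap forces periodicity) is correct and matches the paper's one-line justification for the existence of a maximal simple extension.
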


\begin{proof}
Suppose $V$ is degenerate.  Let $v$ be of minimal length such that for some $a \in \N$, $V = v 1^a v 1^a v \ldots$.  To show $B_V$ is finite, it suffices to show that if $w \in A_V$ with $|v| < |w|$, then $w \notin B_V$.  Suppose $w \in A_V$ with $|v| < |w|$.  By Corollary \ref{cordegeneratesimple}, we know that $w = v1^a v1^a v \ldots v 1^a v$.  It is clear that $v \prec w \prec w1^a w$ and $v \preceq_s w1^aw$ and $w1^aw \in A_V$.  Therefore, $w \notin B_V$. 

Now suppose that $V$ is non-degenerate.  To show $B_V$ is infinite it suffices to show that for any $v \in A_V$, there is some $w \in B_V$ with $v \preceq w$.  Let $v \in A_V$ and choose $w \in A_V$ to be maximal (with respect to $\preceq$) so that $v \preceq_s w$.  The existence of such a $w$ exists follows from the fact that $V$ is not periodic.    

We claim that $w \in B_V$.  Suppose, towards a contradiction, that there exist $u, u^\prime \in A_V$ such that $u \prec w \prec u^\prime$ and so that $u \preceq_s u^\prime$.  If $v = w$, then we $u \prec v \prec u^\prime$ and $u \preceq_s u^\prime$.  By the second part of Lemma \ref{lemmasimply}, this implies $v \preceq_s u^\prime$, which contradicts the maximality of $w$.  

Suppose, then, that $v \prec w$.  If $v$ is comparable to $u$, then either $ u \preceq v \prec w \prec u^\prime $ or $v \preceq u \prec w \prec u^\prime$.  Either way, Lemma \ref{lemmasimply} implies that $ v \preceq_s u^\prime$, which contradicts the maximality of $w$.

Suppose, then, that $u$ and $v$ are incomparable.  The situation is illustrated below in Figure~\ref{propBinfinitefig}. By Proposition \ref{propincomparable}, we know that $(v \wedge u) \prec u \prec (v \vee u)$ and $(v \wedge u) \preceq_s (v \vee u)$.  Thus we have $(v \wedge u) \prec u \prec (v \vee u) \preceq u^\prime$, with $(v \wedge u) \preceq_s (v \vee u)$ and $u \preceq_s u^\prime$.  By the first part of Lemma \ref{lemmasimply}, $(v \wedge u) \preceq_s u^\prime$.  We now have $(v \wedge u) \prec v \prec w \prec u^\prime$, with $(v \wedge u) \preceq_s u^\prime$.  By the second part of Lemma \ref{lemmasimply},  $v \preceq_s u^\prime$, which contradicts the maximality of $w$. 

\begin{figure}
$\xymatrix{
&u'&  &  & &u'&  \\
&w\ar@{->>}[u]& &  & &w\ar@{->>}[u]&  \\
&v\vee u\ar[u]& & \Longrightarrow & &v\vee u\ar[u]&  \\
u\ar@{.>}@/^1pc/[uuur]\ar@{->>}[ur]& &v \ar@{->>}[ul]& & u\ar@{->>}[ur]& & v \ar@{->>}[ul]\ar@{.>}@/_/[uuul] \\
&v\wedge u \ar@{.>}@/_/[uu]\ar@{->>}[ul] \ar@{->>}[ur]& & & & v\wedge u \ar@{->>}[ul]\ar@{->>}[ur]& }$
\caption{\label{propBinfinitefig}The proof of Proposition~\ref{propBinfinite}.}
\end{figure}
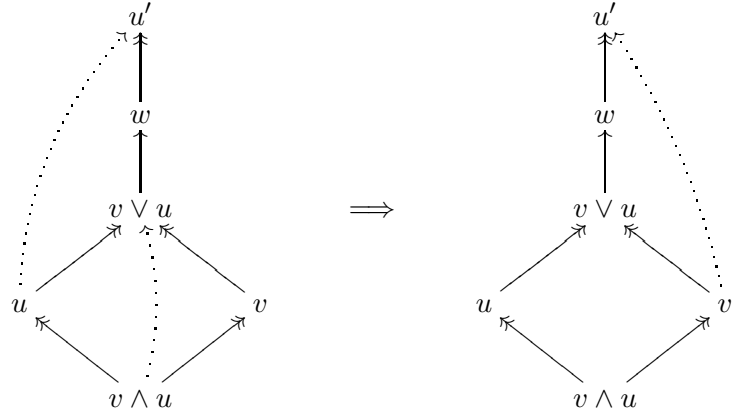

\end{proof}

In fact, the proof above showed the following.

\begin{corollary}
\label{corC}
If $V$ is non-degnerate and $v \in A_V$, then there is some $w \in B_V$ with $v \preceq w$.
\end{corollary}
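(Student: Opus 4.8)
The plan is to recognize that this corollary is, essentially verbatim, the content of the second half of the proof of Proposition~\ref{propBinfinite}, now extracted and stated on its own; so I would reorganize that argument into a self-contained one. Fix a non-degenerate $V$ and $v \in A_V$, and set $S = \{w \in A_V : v \preceq_s w\}$. Since $v \preceq_s v$, the set $S$ is nonempty. Because $V$ is non-degenerate, Proposition~\ref{prop1}(a) says $V$ is not periodic, so there is no $a \in \N$ with $V = v 1^a v 1^a v \ldots$; this forces the lengths of words in $S$ to be bounded (if $v \preceq_s w$ then $w = v 1^a v 1^a \ldots v$ for a fixed $a$, and if such $w \in A_V$ existed of unbounded length then $V \in A^w$ for all of them would force $V = v 1^a v 1^a \ldots$, a contradiction). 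Hence $S$ is finite and admits a $\preceq$-maximal element $w$. Since $v \preceq_s w$ gives $v \preceq w$, it remains only to show $w \in B_V$.

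Next I would prove $w \in B_V$ by contradiction. Suppose $u, u^\prime \in A_V$ satisfy $u \prec w \prec u^\prime$ and $u \preceq_s u^\prime$; the goal is to deduce $v \preceq_s u^\prime$, for then $u^\prime \in S$ with $w \prec u^\prime$, contradicting the maximality of $w$. Split into three cases. If $v = w$, then $u \prec v \prec u^\prime$ with $u \preceq_s u^\prime$, so Lemma~\ref{lemmasimply}(b) gives $v \preceq_s u^\prime$. If $v \prec w$ and $v$ is comparable to $u$, then either $u \preceq v \prec w \prec u^\prime$ or $v \preceq u \prec w \prec u^\prime$, and in either case combining $u \preceq_s u^\prime$ with Lemma~\ref{lemmasimply} yields $v \preceq_s u^\prime$. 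Finally, if $v \prec w$ and $v, u$ are incomparable, then by Proposition~\ref{propincomparable} we have $(v \wedge u) \prec u \prec (v \vee u)$ and $(v \wedge u) \preceq_s (v \vee u)$; since $(v \vee u) \preceq u^\prime$ and $u \preceq_s u^\prime$, Lemma~\ref{lemmasimply}(a) gives $(v \wedge u) \preceq_s u^\prime$, and then from $(v \wedge u) \prec v \prec w \prec u^\prime$ with $(v \wedge u) \preceq_s u^\prime$, Lemma~\ref{lemmasimply}(b) gives $v \preceq_s u^\prime$. In every case we reach the desired contradiction, so $w \in B_V$, completing the proof.

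There is no genuinely difficult step here, since the argument is assembled entirely from Lemma~\ref{lemmasimply} and Proposition~\ref{propincomparable}, which are already available; the only point needing a little care is the existence of a $\preceq$-maximal element of $S$, which is exactly where non-degeneracy (through the non-periodicity of $V$) enters, bounding the lengths of words built simply from $v$. I would also cross-check the incomparable case against Figure~\ref{propBinfinitefig} to make sure the two applications of Lemma~\ref{lemmasimply} are in the correct configurations: the chain $(v \wedge u) \preceq u \prec (v \vee u) \preceq u^\prime$ with the two ``simply'' relations for part~(a), and then the chain $(v \wedge u) \preceq v \preceq u^\prime$ with $(v \wedge u) \preceq_s u^\prime$ for part~(b).
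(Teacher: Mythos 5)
Your proposal is correct and is essentially the paper's own argument: the paper derives Corollary~\ref{corC} by observing that the second half of the proof of Proposition~\ref{propBinfinite} (take $w$ maximal in $A_V$ with $v \preceq_s w$, then rule out witnesses $u \prec w \prec u'$ with $u \preceq_s u'$ via Lemma~\ref{lemmasimply} and Proposition~\ref{propincomparable}) establishes exactly this statement. Your only addition is to flesh out why a $\preceq$-maximal such $w$ exists, which the paper dismisses with a one-line appeal to non-periodicity, and your justification of that point is sound.
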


\subsubsection{The canonical generating sequence of a non-degenerate rank-1 word}
\label{seccgs}

We now know that for $V \in  \mathcal{R}$, $B_V$ is an infinite set that is totally ordered by $\preceq$.  Let $(v_n : n \in \N)$ enumerate $B_V$ so that for $n, m \in \N$, $n \leq m$ iff $v_n \preceq v_m$.  We call the sequence $(v_n : n \in \N)$ the {\it canonical generating sequence} for $V$.

\subsubsection{Sets of the form $A^v$ and sets of the form $B^v$}

Recall that for $v \in \mathcal{F}$, $$A^v = \{V \in  \mathcal{R}^\prime : \textnormal{$V$ is built from $v$}\} = \{V \in  \mathcal{R}^\prime : v \in A_V\}.$$  
Let $$B^v = \{V \in  \mathcal{R}^\prime : \textnormal{$V$ is fundamentally built from $v$}\} = \{V \in  \mathcal{R}^\prime : v \in B_V\}.$$

Remarks:
\begin{enumerate}
\item  $ \mathcal{R}^\prime = A^0 = B^0$.
\item  For any $v$, $B^v \subseteq A^v$.
\item  If $v \preceq w$, then $A^w \subseteq A^v$.
\item  If $B^v \cap B^w \neq \emptyset$, then $v$ is comparable to $w$.
\end{enumerate}

\begin{proposition}
\label{propultrametric}  
If $v \prec w$ and $B^v \cap A^w \neq \emptyset$, then $A^w \subseteq B^v$.
\end{proposition}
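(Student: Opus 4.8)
The plan is to show that if $V \in B^v \cap A^w$ with $v \prec w$, then $v$ remains fundamentally built into every $V' \in A^w$, using the fact that membership in $B_V$ is determined by local data about the lengths $L_V(i)$ on congruence classes. First I would unpack what $V \in A^w$ means via Lemma~\ref{lemmacharbuilt}(a): if $w = 01^{b_1}0\ldots 1^{b_{s-1}}0$, then $V \in A^w$ says exactly that $L_V(i) = b_k$ whenever $i \equiv k \bmod s$ and $0 < k < s$. In other words, the function $L_V$ restricted to the non-multiples of $s$ is completely pinned down by $w$ alone. Consequently, if $V, V' \in A^w$, then $L_V$ and $L_{V'}$ agree on every congruence class mod $s$ except possibly the class of multiples of $s$.

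The key observation is that whether $u \preceq_s u''$ holds for words $u, u'' \in A_V$ with $|u| = r$, $|u''| = r'$ (so $r \mid r'$, $r \mid s$ via comparability considerations) depends only on the values of $L_V$ at the multiples of $r$ that are not multiples of $r'$ — and since $v \preceq u$ and $v \prec w$, any such word $u$ with $u \prec v$ has $|u| = r$ dividing $|v|$, which divides $|w| = s$. So the relevant positions $i$ (multiples of $r$) include both multiples and non-multiples of $s$. Here I would argue that because $v \in B_V$, for every pair $u \prec v \prec u''$ in $A_V$ we have $u \not\preceq_s u''$, which is witnessed by two positions $i \ne j$, both multiples of $|u|$ and non-multiples of $|u''|$, with $L_V(i) \ne L_V(j)$. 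The delicate point is to locate such witnessing positions among the non-multiples of $s$, so that the inequality $L_V(i) \ne L_V(j)$ transfers verbatim to $L_{V'}$, giving $u \not\preceq_s u''$ as computed in $V'$ as well, hence $v \in B_{V'}$, i.e., $V' \in B^v$.

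The main obstacle — and the step I would spend the most care on — is precisely this transfer of the "witness of non-simplicity" from positions that might be multiples of $s$ to positions that are not. If the only positions witnessing $u \not\preceq_s u''$ in $V$ happened to be multiples of $s$, the argument would break, since $L_{V'}$ could differ from $L_V$ there. I expect to resolve this by exploiting that $u'' \preceq_s u''\, 1^a\, u''$-type extensions can be taken inside $A_V$, or more directly by using Proposition~\ref{propincomparable} and the lattice structure of $(A_V, \preceq)$: if all witnesses were at multiples of $s$, one could replace $u''$ by $u'' \wedge w$ or a suitable common refinement to push the obstruction into a congruence class determined by $w$. A cleaner route may be to first prove the contrapositive locally: show that $V' \notin B^v$ forces some $u \prec v \prec u''$ with $u \preceq_s u''$ in $A_{V'}$, then observe that the relevant $L_{V'}$-values on non-multiples of $s$ equal the $L_V$-values, and that on multiples of $s$ the relation $u \preceq_s u''$ would then propagate back to $V$ (again using that $u''$ can be chosen comparable to $w$, so $|u''|$ is a multiple of $s$ and "multiple of $|u''|$" implies "multiple of $s$"), contradicting $V \in B^v$. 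I would organize the final write-up around this contrapositive, as it localizes the one place where the congruence-class bookkeeping must be done carefully.
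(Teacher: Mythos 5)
Your proposal correctly reduces the problem to the right contrapositive --- if $W \in A^w$ but $W \notin B^v$, there are $u, u'' \in A_W$ with $u \prec v \prec u''$ and $u \preceq_s u''$, and one must transfer this obstruction back to $V$ --- and you correctly flag the one genuine difficulty: $u''$ lies in $A_W$ but need not lie in $A_V$, and need not be comparable to $w$. But you do not close this gap; you only say you ``expect to resolve'' it, and the two concrete devices you offer do not work as stated. The assumption that ``$u''$ can be chosen comparable to $w$'' is unjustified: $A_W$ is a lattice, not a chain, so incomparability is a genuine case. And replacing $u''$ by $u'' \wedge w$ fails precisely when $u'' \wedge w = v$, which is possible since $v$ is a common lower bound of $u''$ and $w$; in that case you no longer have a pair straddling $v$ strictly, and you get no contradiction with $v \in B_V$.

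The paper closes the gap by a three-case analysis on the relation between $u''$ and $w$, working entirely at the level of the lattice $(A_W,\preceq)$ and Lemma~\ref{lemmasimply}, with no return to the functions $L_V$, $L_W$ and congruence classes. If $u'' \preceq w$, then $u'' \in A_V$ and one contradicts $v \in B_V$ directly. If $w \preceq u''$, then $u \prec v \prec w \preceq u''$ with $u \preceq_s u''$ gives $u \preceq_s w$ by Lemma~\ref{lemmasimply}(b), and since $u, w \in A_V$ this contradicts $v \in B_V$. In the incomparable case one goes \emph{up} rather than down: by Proposition~\ref{propincomparable}, $(u''\wedge w) \preceq_s (u''\vee w)$ and $v \preceq (u''\wedge w)$, so Lemma~\ref{lemmasimply}(a) applied to $u \prec v \preceq (u''\wedge w) \prec u'' \prec (u''\vee w)$ yields $u \preceq_s (u''\vee w)$, and then Lemma~\ref{lemmasimply}(b) applied to $u \prec v \prec w \prec (u''\vee w)$ yields $u \preceq_s w$, the same contradiction as before. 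This is the step your sketch is missing. (A minor further point: your divisibility bookkeeping should be phrased in terms of the number of occurrences of $0$ in $u$, $v$, $w$, as in Lemma~\ref{lemmacharbuilt}(b), not in terms of the lengths $|u|$, $|v|$, $|w|$, which need not divide one another.)
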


\begin{proof}
Suppose $v \prec w$ and $V \in B^v \cap A^w$.

We claim that $A^w \subseteq B^v$.  Suppose, towards a contradiction, that $W\in A^w$, but $W \notin B^v$.  Then $w \in A_W$, but $v \notin B_W$.  Note that $v \in A_W$, since $W \in A^w \subseteq A^v$.  Since $v \notin B_W$, there are $u, u^\prime \in A_W$ so that $u \prec v \prec u^\prime$ and $u \preceq_s u^\prime$.  Note that $u \in A_V$ (since $u \prec v$ and $v \in A_V$), but that $u^\prime$ need not be an element of $A_V$.  We have three possibilities; either $u^\prime \preceq w$, $w \preceq u^\prime$, or $u^\prime$ and $w$ are incomparable.

If $u^\prime \preceq w$, then $u^\prime \in A_V$.  Thus, $u,u^\prime \in A_V$, with $u \prec v \prec u^\prime$ and $u \preceq_s u^\prime$.  This is a contradiction with $v \in B_V$.

If $w \preceq u^\prime$, then $u \prec v \prec w \preceq u^\prime$, with $u \preceq _s u^\prime$.  By Lemma \ref{lemmasimply}, this implies that $u \preceq_s w$.  Since $u, w \in A_V$, this is a contradiction with $v \in B_V$.

Suppose that $u^\prime$ and $w$ are incomparable. The situation is illustrated below in Figure~\ref{propultrametricfig}. Since $u^\prime$ and $w$ are both in $A_W$, we know $(u^\prime \vee w), (u^\prime \wedge w) \in A_W$.  We now have $u \prec v \preceq (u^\prime \wedge w) \prec u^\prime \prec (u^\prime \vee w)$, with $u \preceq_s u^\prime$ and $(u^\prime \wedge w) \preceq_s  (u^\prime \vee w)$.  By the first part of Lemma \ref{lemmasimply}, we know that $u \preceq_s (u^\prime \vee w)$.  Now we have $u \prec v \prec w \prec (u^\prime \vee w)$, with $u \preceq _s (u^\prime \vee w)$.  By the second part of Lemma \ref{lemmasimply}, we know that $u \preceq_s w$. Since $u, w \in A_V$, this is a contradiction with $v \in B_V$.
\end{proof}

\begin{figure}
$\xymatrix{
&u'\vee w& &  & &u'\vee w&  \\
u'\ar@{->>}[ur]& &w \ar@{->>}[ul]& & u'\ar@{->>}[ur]& & w \ar@{->>}[ul]\\
&u'\wedge w \ar@{.>}@/_/[uu]\ar@{->>}[ul] \ar@{->>}[ur]& & \Longrightarrow& & u'\wedge w \ar@{->>}[ul]\ar@{->>}[ur]& \\
&v\ar[u]&  &  & &v\ar[u]&  \\
&u\ar@{.>}@/^1pc/[uuul]\ar@{->>}[u]& &  & &u\ar@{->>}[u]\ar@{.>}@/_/[uuur] &  \\
 }$
\caption{\label{propultrametricfig}The proof of Proposition~\ref{propultrametric}. }
\end{figure}

\begin{corollary}
\label{corultrametric}  
If $v \preceq w$ and $B^v \cap B^w \neq \emptyset$, then $B^w \subseteq B^v$.
\end{corollary}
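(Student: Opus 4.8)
\textbf{Proof plan for Corollary~\ref{corultrametric}.}
The plan is to reduce immediately to Proposition~\ref{propultrametric}. First I would dispose of the degenerate case $v = w$: then $B^w = B^v$, so certainly $B^w \subseteq B^v$, and there is nothing to prove. So assume $v \prec w$.

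Next I would upgrade the hypothesis. By Remark~2 (for any $u$, $B^u \subseteq A^u$), we have $B^w \subseteq A^w$, and hence
$$ \emptyset \neq B^v \cap B^w \subseteq B^v \cap A^w. $$
Thus $v \prec w$ and $B^v \cap A^w \neq \emptyset$, so Proposition~\ref{propultrametric} applies and yields $A^w \subseteq B^v$. Finally, using $B^w \subseteq A^w$ once more, I would conclude
$$ B^w \subseteq A^w \subseteq B^v, $$
which is the desired inclusion. There is essentially no obstacle here; the only point to be careful about is to separate out the case $v = w$, since Proposition~\ref{propultrametric} is stated for the strict relation $v \prec w$.
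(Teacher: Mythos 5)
Your proof is correct and is exactly the argument the paper intends (the corollary is stated without proof immediately after Proposition~\ref{propultrametric}): handle $v=w$ trivially, use $B^w\subseteq A^w$ to get $B^v\cap A^w\neq\emptyset$, apply the proposition, and conclude $B^w\subseteq A^w\subseteq B^v$. Nothing to add.
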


\begin{proposition}
\label{propcompatible}
The topology generated by $\{A^v : v \in \mathcal{F}\}$ is the same as the topology generated by $\{B^v : v \in \mathcal{F}\}$.
\end{proposition}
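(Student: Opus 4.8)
The plan is to prove that the two subbases generate the same topology by showing that each subbasic open set of one topology is open in the other. Write $T_A$ and $T_B$ for the topologies generated by $\{A^v : v \in \mathcal{F}\}$ and $\{B^v : v \in \mathcal{F}\}$ respectively. Since we only know $B^v\subseteq A^v$ in general, the two inclusions $T_B\subseteq T_A$ and $T_A\subseteq T_B$ require genuinely separate arguments.

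For $T_B\subseteq T_A$, I would show that each $B^v$ is $T_A$-open by producing, for every $V\in B^v$, a subbasic $T_A$-neighborhood of $V$ contained in $B^v$. The key point is to choose $w\in A_V$ with $v\prec w$ \emph{strictly}: such a $w$ exists because $V$ is rank-1, so $A_V$ is infinite, and $(A_V,\preceq)$ is a lattice by Proposition~\ref{proplattice} --- if no element of $A_V$ were strictly above $v$, then for any $x\in A_V$ the join $v\vee x\in A_V$ would equal $v$, forcing $x\preceq v$, so $A_V$ would be contained in the finite set of words of length at most $|v|$, a contradiction. (Concretely, one may take $w=v\vee v'$ for any $v'\in A_V$ with $|v'|>|v|$, which exists since $A_V$ contains words of arbitrarily large length.) Now $V\in B^v\cap A^w$, so this intersection is nonempty; since $v\prec w$, Proposition~\ref{propultrametric} gives $A^w\subseteq B^v$. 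Thus $A^w$ is the desired neighborhood, and $B^v$, being a union of such sets over its points, is $T_A$-open.

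For $T_A\subseteq T_B$, I would dually show each $A^v$ is $T_B$-open. Given $V\in A^v$, so $v\in A_V$, Corollary~\ref{corC} furnishes $w\in B_V$ with $v\preceq w$, hence $V\in B^w$. It remains to check $B^w\subseteq A^v$: if $W\in B^w$ then $w\in B_W\subseteq A_W$, and since $v\preceq w$ one obtains $v\in A_W$ by substituting a ``built from $v$'' decomposition of $w$ into a ``built from $w$'' decomposition of $W$ (a concatenation of copies of $v$ and blocks of $1$'s beginning with $v$). Hence $W\in A^v$, so $B^w$ is a subbasic $T_B$-neighborhood of $V$ inside $A^v$, and $A^v$ is $T_B$-open.

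The step I expect to be the main obstacle is the first direction: one must notice that Proposition~\ref{propultrametric} yields the containment $A^w\subseteq B^v$ only when the inequality $v\prec w$ is strict, so the real content is guaranteeing an element of $A_V$ strictly above $v$ --- which is exactly where the rank-1 hypothesis (infiniteness of $A_V$) and the lattice structure of Proposition~\ref{proplattice} enter. The second direction is largely bookkeeping once Corollary~\ref{corC} is available; note that Corollary~\ref{corC} invokes non-degeneracy of $V$, so the statement is most naturally read over the space $\mathcal{R}$ of non-degenerate rank-1 words.
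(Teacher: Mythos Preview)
Your proof is correct and follows essentially the same approach as the paper's: both directions rest on the same two tools, Corollary~\ref{corC} for $T_A\subseteq T_B$ and Proposition~\ref{propultrametric} for $T_B\subseteq T_A$. The only cosmetic difference is that for the latter direction you produce $w\succ v$ via the lattice structure of $A_V$, whereas the paper simply takes $w\in B_V$ strictly above $v$ (using that $B_V$ is infinite and totally ordered); your explicit attention to the \emph{strict} inequality required by Proposition~\ref{propultrametric}, and to the role of non-degeneracy through Corollary~\ref{corC}, is in fact a bit more careful than the paper's write-up.
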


\begin{proof}
It suffices to show the following:
\begin{enumerate}
\item [ (a) ] If $v \in \mathcal{F}$ and $V \in A^v$, then there exists $w \in \mathcal{F}$ so that $V \in B^w \subseteq A^v$.
\item [ (b) ] If $v \in \mathcal{F}$ and $V \in B^v$, then there exists $w \in \mathcal{F}$ so that $V \in A^w \subseteq B^v$.
\end{enumerate}

Suppose $v \in \mathcal{F}$ and $V \in A^v$. Choose $w \in B_V$ so that $v \preceq w$.  Clearly, $B^w \subseteq A^w \subseteq A^v$.  Since $w \in B_V$, $V \in B^w$.  This proves (a).

Suppose $v \in \mathcal{F}$ and $V \in B^v$.  Choose $w \in B_V$ so that $v \preceq w$.  Clearly, $B^v \cap A^w \neq \emptyset$.  By Proposition \ref{propultrametric}, $A^w \subseteq B^v$.  Since $w \in B_V$, $V \in B^w \subseteq A^w$.  This proves (b).
\end{proof}

\subsubsection{A complete metric on $ \mathcal{R}$}
\label{secmetric}

Define $d :  \mathcal{R} \times  \mathcal{R} \rightarrow \R$ by $$d(V,W) = 2^{-\sup \{|v| : V, W \in B^v\}}.$$

Remarks:
\begin{enumerate}
\item  If $V, W \in  \mathcal{R}$, then $V,W \in B^0$ and, therefore, $0 \leq d(V, W) \leq \frac{1}{2}$.
\item  By Proposition \ref{propcompatible}, $d$ generates the topology $T$ on $ \mathcal{R}$.
\end{enumerate}

\begin{proposition}
\label{propcomplete}
$( \mathcal{R}, d)$ is a complete ultrametric space.
\end{proposition}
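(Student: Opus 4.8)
The plan is to verify first that $d$ is an ultrametric, and then to prove completeness by realizing the limit of a $d$-Cauchy sequence as the common extension of a suitable $\preceq$-chain of finite words from $\mathcal{F}$. Symmetry and non-negativity of $d$ are immediate; $d(V,V)=0$ because $B_V$ is infinite (Proposition~\ref{propBinfinite}), so $\sup\{|v|:V\in B^v\}=\infty$; and if $d(V,W)=0$ then for every $m$ there is $v$ with $|v|\ge m$ and (using $B^v\subseteq A^v$) $v\in A_V\cap A_W$, so $V$ and $W$ both have the prefix $v$, and letting $m\to\infty$ gives $V=W$. For the ultrametric inequality, suppose $d(V,U)\le 2^{-n}$ and $d(U,W)\le 2^{-n}$, so there are $v,w\in\mathcal{F}$ of length at least $n$ with $V,U\in B^v$ and $U,W\in B^w$. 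Since $B^v\cap B^w\ni U$, the words $v$ and $w$ are $\preceq$-comparable; say $v\preceq w$ (the other case being symmetric). Then $B^w\subseteq B^v$ by Corollary~\ref{corultrametric}, so $V,W\in B^v$ and hence $d(V,W)\le 2^{-|v|}\le 2^{-n}=\max\{d(V,U),d(U,W)\}$.

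For completeness, let $(V_n)$ be $d$-Cauchy and fix indices $N_1<N_2<\cdots$ with $d(V_n,V_m)\le 2^{-k}$ for all $n,m\ge N_k$. For each $k$ let $y_k$ be the $\preceq$-least element of the infinite, totally ordered set $B_{V_{N_k}}$ of length at least $k$. If $n\ge N_k$, then choosing $v$ with $|v|\ge k$ and $V_{N_k},V_n\in B^v$ we get $v\in B_{V_{N_k}}$, hence $y_k\preceq v$, hence $V_n\in B^v\subseteq B^{y_k}$ by Corollary~\ref{corultrametric}; so $y_k\in B_{V_n}$ for all $n\ge N_k$. Next I claim $y_k\preceq y_{k+1}$: both words lie in $B_{V_{N_{k+1}}}$ and are therefore comparable, and if $y_{k+1}\prec y_k$ then Corollary~\ref{corultrametric} gives $B^{y_k}\subseteq B^{y_{k+1}}$, so $V_{N_k}\in B^{y_k}\subseteq B^{y_{k+1}}$, i.e. $y_{k+1}\in B_{V_{N_k}}$, which contradicts the minimality of $y_k$ since $|y_{k+1}|\ge k+1\ge k$. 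Thus $y_1\preceq y_2\preceq\cdots$ with $|y_k|\to\infty$, and after relabeling a subsequence (which preserves the displayed properties) we may assume the chain is strict: $y_1\prec y_2\prec\cdots$.

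Since $y_k\preceq y_{k+1}$ means $y_k$ is a prefix of $y_{k+1}$, the $y_k$ determine a unique $V\in\{0,1\}^\N$ having every $y_k$ as a prefix; choosing factorizations of the $y_j$ into copies of $y_k$ compatibly along the chain shows $V$ is built from each $y_k$, so $A_V$ is infinite and $V$ is a rank-1 word. The crux is that in fact $y_k\in B_V$ for every $k$: for $n\ge N_{k+1}$ we have $V_n\in B^{y_k}$ (shown above) and $V_n\in A^{y_{k+1}}$ (as $V_n$ is built from $y_{k+1}$), so $B^{y_k}\cap A^{y_{k+1}}\ne\emptyset$; since $y_k\prec y_{k+1}$, Proposition~\ref{propultrametric} gives $A^{y_{k+1}}\subseteq B^{y_k}$, and $V\in A^{y_{k+1}}$ then forces $V\in B^{y_k}$, i.e. $y_k\in B_V$. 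Consequently $B_V$ is infinite, so $V\in\mathcal{R}$ by Proposition~\ref{propBinfinite}; and for $n\ge N_k$ both $V_n$ and $V$ lie in $B^{y_k}$, whence $d(V_n,V)\le 2^{-|y_k|}\le 2^{-k}$. Therefore $V_n\to V$ in $(\mathcal{R},d)$, and the space is complete.

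The step I expect to be the real obstacle is establishing $y_k\in B_V$ (equivalently, that the candidate limit $V$ lies in $\mathcal{R}$ at all): producing a rank-1 word that is built from arbitrarily long words common to the $V_n$ is routine, but for $V\in\mathcal{R}$ and for genuine $d$-convergence one needs $V$ to be \emph{fundamentally} built from arbitrarily long words, and Proposition~\ref{propultrametric} is exactly the device that transfers "fundamentally built from $y_k$" from the $V_n$ to $V$.
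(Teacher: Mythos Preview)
Your proof is correct and follows essentially the same approach as the paper. The ultrametric verification is identical, and for completeness both arguments produce the limit $V$ as the common extension of a $\preceq$-chain of words eventually lying in each $B_{V_n}$, with Proposition~\ref{propultrametric} serving as the key device to transfer ``fundamentally built from'' to $V$; the only difference is cosmetic---you construct an explicit chain $(y_k)$ by picking $\preceq$-minimal elements of the $B_{V_{N_k}}$, whereas the paper works with the abstract set $B=\{v:\text{for sufficiently large }n,\ V_n\in B^v\}$ and shows $B\subseteq B_V$ and $B_V\subseteq B$.
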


\begin{proof}
We first show that $d$ is an ultrametric. For this we need to verify the following conditions for all $U,V,W \in  \mathcal{R}$.

(1) $d(U, V) \geq 0$.  This is immediate from the definition of $d$.

(2) $d(U, V) = d(V, U)$.  This is immediate from the definition of $d$.

(3) $d(U, V) = 0$ iff $U=V$.  If $V \in  \mathcal{R}$, then $B_V$ is infinite; thus, $d(V,V) = 0$.  If $d(U, V) = 0$, then there exist arbitrarily long words that are common initial segments of $U$ and $V$; thus, $U=V$.  

(4) For all $U,V,W \in  \mathcal{R}$, $d(U, W) \leq \max \{d(U,V), d(V,W)\}$.  This is clear if $U=V$ or $V = W$.  Otherwise, let $v$ be as long as possible satisfying $U, V \in B^v$ and let $w$ be as long as possible satisfying $V, W \in B^w$.  Without loss of generality, assume $|v| \leq |w|$.  Since $ B^v \cap B^w \neq \emptyset$, we know $v \preceq w$.  By Corollary \ref{corultrametric}, we know that $B^w \subseteq B^v$ and hence, $W \in B^v$.  Now $U, W \in B^v$ and thus, $d(U,W) \leq d(U, V)$.

Next we prove that $d$ is complete. Suppose $\{V_n\}$ is a $d$-Cauchy sequence in $ \mathcal{R}$.  Let $$B = \{v : \textnormal{for sufficiently large $n$, $V_n \in B^v$}\}.$$  

We claim that $(B, \preceq)$ is a total order.  If $v, w \in B$, then for sufficiently large $n$, $V_n \in B^v \cap B^w$.  Thus, $B^v \cap B^w \neq \emptyset$ and therefore, $v$ and $w$ are comparable. 

We claim that $B$ contains arbitrarily long $v$.  Let $M \in \N$.  Find $N$ so that for $n,m \geq N$, the distance between $V_n$ and $V_m$ is at most $2^{-M}$.  So for $n,m\geq N$, there is some $v$ of length at least $M$ so that $V_n, V_m \in B^v$.  Choose $v$ of minimal length so that $|v|\geq M$ and so that for some $n \geq N$, $V_n \in B^v$.  It follows from Corollary \ref{corultrametric} and the minimality of $|v|$ that if $m \geq N$, then $V_m \in B^v$.  Thus, $v$ is an element of $B$.

We now have $B$, an infinite set on which $\preceq$ is a total order.  Let $V$ be the unique infinite word such that each element of $B$ is an initial segment of $V$.  We claim that $V$ is rank-1.  It suffices to show that $B \subseteq A_V$.  If $v \in B$, then there are arbitrarily long initial segments of $V$ that are built from $v$; namely, the elements of $B$ that are longer than $v$.  Thus, $v \in A_V$.

We claim that $V$ is not degenerate.  By Proposition \ref{propBinfinite}, It suffices to show that $B \subseteq B_V$.  Let $v \in B$ and choose $w \in B$ with $v \prec w$.  Choose $n \in \N$ large enough that $V_n \in B^v \cap B^w$.  Clearly, $B^v \cap A^w \neq \emptyset$.  By Proposition \ref{propultrametric}, $A^w \subseteq B^v$.  Since $V \in A^w$, $V \in B^v$.  Therefore, $v \in B_V$.      

We claim that $V$ is the $d$-limit of the sequence $\{V_n\}$.  It suffices to show that $B_V \subseteq B$.  Let $v \in B_V$.  Choose $w \in B$ so that $v \prec w$.  Since $B \subseteq B_V$, $w \in B_V$, hence $B^v \cap B^w \neq \emptyset$.  By Corollary \ref{corultrametric}, $B^w \subseteq B^v$.  For sufficiently large $n \in \N$, we know $V_n \in B^w \subseteq B^v$.  Therefore, $v \in B$.
\end{proof}

\subsection{Expectedness in rank-1 words and elements of rank-1 systems}
\label{expected}

\subsubsection{Expectedness in a rank-1 word}
\label{secexpword}

For the rest of Section \ref{secexpword}, fix $V \in  \mathcal{R}$ and fix $v \in A_V$.  Let $(X, \sigma)$ be the rank-1 system associated to $V$ and let $v = 0 1^{a_1} 0 1^{a_2} \ldots 1^{a_{r-1}} 0$.

If $V(i) = 0$ and $|\{j < i : V(j) =0\}|$ is a multiple of $r$, then $V$ has an occurrence of $v$ beginning at $i$.  Such an occurrence of $v$ in $V$ is called {\em expected}.  Since $V$ is built from $v$, each occurrence of 0 in $V$ is part of exactly one expected occurrence of $v$.  It is possible for an occurrence of $v$ in $V$ to be {\em unexpected}; it is easy to see that such an occurrence of $v$ must overlap exactly two expected occurrence of $v$ in $V$. 

In Section \ref{secexpconnect} below we will show that if $x \in X$, then there is a unique collection of occurrences of $v$ in $x$ (these will be called the expected occurrences of $v$ in $x$) so that each occurrence of 0 is part of exactly one element of that collection.  We will also show that for any $i \in \Z$, the set of all $x \in X$ that have an expected occurrence of $v$ beginning at $i$ is a clopen subset of $X$.  The key to these results is noticing that one can determine whether an occurrence of $v$ in $V$ (say it begins at $i$) is expected by knowing what $V$ looks like ``close" to position $i$; in particular, we do not need to count the number of 0s in $V$ before position $i$.  This is formalized in Corollary \ref{corexp} below.

Recall that $(i_n : n \in \N$) enumerates, in an order-preserving way, the positions of the occurrences of 0 in $V$.  Recall also that $L: \N_{>0} \rightarrow \N$ is defined by $L(n) = i_n - i_{n-1} -1$.  If $V$ is built from $v$, then we know from part (a) of Lemma \ref{lemmacharbuilt} that for $0<k<r$ and $n \equiv k \mod r$, then $L(n) = a_k$.  Thus, for each $0 < k < r$, $L$ is constant on each congruence class of $k$ (mod $r$).  Since $V \in R$ is not periodic, neither is $L$.  Thus $L$ is not constant on the congruence class of 0 (mod $r$).  In fact, we can show that if $V$ is built from $v$, the function $L$ fails to be constant on the congruence class of $0$ (mod $r$) somewhat regularly.

\begin{proposition}
\label{propconsec}
There is some $t \in \N$ so that no $t$ consecutive elements of the sequence $(L(r), L(2r), L(3r), \ldots )$ are equal.
\end{proposition}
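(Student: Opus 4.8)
The plan is to argue by contradiction. Writing $b_i := L(ir)$ for $i\ge 1$, the sequence in the statement is $(b_1,b_2,b_3,\dots)$, and since $v\in A_V$ we have $V = v1^{b_1}v1^{b_2}v1^{b_3}\cdots$. Suppose that for every $t\in\N$ some $t$ consecutive $b_i$ are equal; I will deduce that $(b_i)$ is in fact constant, so that $V = uuu\cdots$ with $u = v1^{b_1}$, which by Proposition~\ref{prop1}(a) makes $V$ degenerate --- contradicting $V\in\mathcal{R}$.

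The engine of the argument is a rigidity statement: any word of $A_V$ lying strictly above $v$ severely constrains $(b_i)$. Suppose $v\prec w$ with $w\in A_V$, and let $m\ge 2$ be the number of copies of $v$ occurring in $w$, so that $w = v1^{c_1}v1^{c_2}\cdots v1^{c_{m-1}}v$. Because $w\in A_V$, the word $V$ is a concatenation of copies of $w$ separated by blocks of $1$'s, with $w$ itself the initial such block; comparing this decomposition of $V$ with the finer one $V = v1^{b_1}v1^{b_2}\cdots$ shows that $c_j = b_j$ for $1\le j\le m-1$ and, crucially, that $b_i = b_j$ whenever $m\nmid i$ and $j\in\{1,\dots,m-1\}$ is the residue of $i$ modulo $m$. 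In other words, off the multiples of $m$ the sequence $(b_i)$ is periodic with period $m$. Now I would feed in a single long run: by hypothesis there are $m$ consecutive indices $p,p+1,\dots,p+m-1$ with $b_p=\cdots=b_{p+m-1}=c$. Since $m$ consecutive integers realize every residue class modulo $m$, for each $j\in\{1,\dots,m-1\}$ one of these indices, say $p_j$, satisfies $p_j\equiv j\pmod m$; as $m\nmid p_j$ we get $b_j = b_{p_j} = c$. Hence $b_1 = b_2 = \cdots = b_{m-1} = c = b_1$, and therefore $b_i = b_1$ for every $i$ with $m\nmid i$.

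To finish I need such a $w$ with $m$ arbitrarily large. Recall that $r$ is the number of $0$'s in $v$. Since $V$ is rank-$1$, $A_V$ is infinite, and a word in $A_V$ is determined by its number of $0$'s (it is the prefix of $V$ ending at that many $0$'s), so $A_V$ contains a word $w_0$ with at least $(i+1)r$ occurrences of $0$. Put $w := v\vee w_0$, which lies in $A_V$ by Proposition~\ref{proplattice} and satisfies $v\preceq w$ and $w_0\preceq w$; the latter forces $w$ to have at least as many $0$'s as $w_0$, so by Lemma~\ref{lemmacharbuilt}(b) the number of copies of $v$ in $w$ is an integer that is at least $(i+1)$. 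Thus $m>i$, in particular $m\ge 2$ and $v\prec w$, so the previous paragraph applies to this $w$ and gives $b_i = b_1$, since $0<i<m$ forces $m\nmid i$. As $i$ was arbitrary, $(b_i)$ is constant, which is the desired contradiction.

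The one step that genuinely requires care is the middle one --- extracting from the mere fact that some longer $w$ lies in $A_V$ the conclusion that $(b_i)$ is forced to be period-$m$ away from the multiples of $m$, and then observing that, while such periodicity is perfectly compatible with long constant runs for any \emph{fixed} $m$, it becomes incompatible with arbitrarily long constant runs once $m$ is allowed to grow. The remaining ingredients --- that $A_V$ contains words with arbitrarily many $0$'s, and that joins stay inside $A_V$ (Proposition~\ref{proplattice}) --- are already in place, and the residue bookkeeping is routine.
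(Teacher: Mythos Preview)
Your proof is correct, and it takes a genuinely different route from the paper's.

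The paper argues in two steps. First, it picks any $w\in A_V$ with more than $r$ zeros and lets $D$ be the largest block of $1$'s in $w$; since every occurrence of $w$ in $V$ must straddle two consecutive expected copies of $v$, no two consecutive terms of $(L(r),L(2r),\dots)$ can both exceed $D$. Hence if arbitrarily long constant runs existed, a pigeonhole argument would give a single value $d\le D$ with arbitrarily long runs, so $(v1^d)^n$ is a subword of $V$ for every $n$. This puts the periodic point $(\dots v1^d v1^d\dots)$ inside $X$, and since it contains a $0$ this contradicts Corollary~\ref{cor1}.

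Your argument stays entirely on the combinatorial side and never touches $X$: you exploit that any $w\in A_V$ with $v\prec w$ forces the sequence $(b_i)$ to be periodic of period $m$ off the multiples of $m$, and then a single run of length $m$ pins down all $b_j$ with $1\le j\le m-1$ to a common value. Letting $m$ grow via the lattice structure (Proposition~\ref{proplattice}) collapses $(b_i)$ to a constant, making $V$ periodic and hence degenerate. This is more elementary in that it avoids the dynamical input (Corollary~\ref{cor1}) and relies only on Lemma~\ref{lemmacharbuilt} and Proposition~\ref{proplattice}; the paper's proof is shorter but leans on the earlier analysis of periodic points in $X$. Both reach the same contradiction---periodicity of $V$---but yours gets there by rigidity of the $b_i$-sequence while the paper gets there by producing a forbidden periodic orbit.
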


\begin{proof}
First we show that for sufficiently large $d$, there cannot even be two consecutive elements of $(L(r), L(2r), L(3r), \ldots )$ that each equal $d$.  Simply choose $w$ so that $V$ is built from $w$ and so that $w$ has more than $r$ occurrences of 0.  Let $D$ be the maximal number of consecutive 1s in $w$.  Since $w$ has more occurrences of 0 than $v$ does, every occurrence of $w$ in $V$ intersects at least two expected occurrences of $v$.  This implies that for every two consecutive elements of $(L(r), L(2r), L(3r), \ldots )$, at least one must be less than or equal to $D$.  Therefore, if $d >D$, there cannot be two consecutive elements of $(L(r), L(2r), L(3r), \ldots )$ that each equal $d$.  

Suppose, towards a contradiction, that there is no bound on the number of consecutive elements of $(L(r), L(2r), L(3r), \ldots )$ that are equal.  By the preceding paragraph, there must be a single value $d$ so that there is no bound on the number of consecutive elements of $(L(r), L(2r), L(3r), \ldots )$ that equal $d$.  Now let $\alpha = v1^d$ and notice that for each $n$, $\alpha^n$ is a subword of $V$.  This implies the existence of an $x \in X$ that contains a 0 and that is periodic, namely, $(\ldots \alpha \alpha \alpha \ldots )$.  This contradicts Corollary \ref{cor1}.
\end{proof}

For the rest of Section \ref{secexpword}, fix $t \in \N$ so that no $t$ consecutive elements of the sequence $(L(r), L(2r), L(3r), \ldots )$ are equal.  

\begin{corollary}
\label{corexpV}
The following statements (about $n \in \N$) are equivalent.
\begin{enumerate}
\item [ (i) ] $n \equiv 0 \mod r$. 
\item [ (ii) ] $L(n+r), L(n + 2r), \ldots, L(n + tr)$ are not all equal.
\end{enumerate}
\end{corollary}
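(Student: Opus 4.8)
The plan is to prove the two implications separately, each being a short deduction from facts already established, with essentially no hard step. For the direction (i) $\Rightarrow$ (ii), I would suppose $n \equiv 0 \bmod r$, say $n = mr$ with $m \in \N$. Then $n + jr = (m+j)r$ for $j = 1, \dots, t$, so the tuple $\bigl(L(n+r), L(n+2r), \dots, L(n+tr)\bigr)$ is exactly the block of $t$ consecutive terms $\bigl(L((m+1)r), L((m+2)r), \dots, L((m+t)r)\bigr)$ of the sequence $(L(r), L(2r), L(3r), \dots)$. By the choice of $t$ fixed just before the corollary (legitimate by Proposition~\ref{propconsec}), no $t$ consecutive terms of that sequence are all equal, so (ii) holds.

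For the contrapositive of the other direction, $\neg$(i) $\Rightarrow$ $\neg$(ii), I would suppose $n \not\equiv 0 \bmod r$ and write $n \equiv k \bmod r$ with $0 < k < r$. Then for every $j = 1, \dots, t$ we have $n + jr \equiv k \bmod r$ with $0 < k < r$; since $V$ is built from $v = 0 1^{a_1} 0 \cdots 1^{a_{r-1}} 0$, part (a) of Lemma~\ref{lemmacharbuilt} gives $L(n+jr) = a_k$ for each such $j$. Hence $L(n+r) = L(n+2r) = \dots = L(n+tr) = a_k$, which are all equal, so (ii) fails. This completes the equivalence.

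The only minor point worth checking is that all indices involved lie in the domain $\N_{>0}$ of $L$, which is immediate since $n \in \N$ and $r \geq 1$ force $n + jr \geq 1$ for $j \geq 1$. I do not expect any genuine obstacle here: the real content has already been isolated in Proposition~\ref{propconsec} (supplying the $\Rightarrow$ direction) and in Lemma~\ref{lemmacharbuilt}(a) (supplying the $\Leftarrow$ direction), and the corollary merely records the consequence that will actually be used later — namely that whether an occurrence of $v$ in $V$ is expected can be detected by inspecting only a bounded window of $V$ around that occurrence, without counting the occurrences of $0$ preceding it.
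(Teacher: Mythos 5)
Your proof is correct and is exactly the argument the paper intends (the corollary is stated without proof, being an immediate consequence of the choice of $t$ via Proposition~\ref{propconsec} for one direction and of Lemma~\ref{lemmacharbuilt}(a) for the other). Both implications and the domain check are handled properly; nothing is missing.
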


We now describe a uniform way to check whether a particular occurrence of $v$ in $V$ is expected.  

\begin{corollary}
\label{corexp}
Suppose $V$ has an occurrence of $v$ beginning at $i$.  Let $\alpha = 01^{b_1} 0 1^{b_2} 0 \ldots  0 1^{b_s}$ be the subword of $V$ of length $2t|v|$  that begins at $i$.  
\begin{enumerate}
\item [ (a) ] If $b_l \geq |v|$ for some $0 < l \leq s$, then the occurrence of $v$ in $V$ beginning at $i$ is expected iff $l \equiv 0 \mod r$.
\item [ (b) ] If $b_l < |v|$ for all $0 < l \leq s$, then $s > rt$ and the occurrence of $v$ in $V$ beginning at $i$ is expected iff 
$b_r, b_{2r}, \ldots, b_{tr}$ are not all equal.
\end{enumerate}
\end{corollary}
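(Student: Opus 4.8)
The plan rests on two observations: first, that whether an occurrence of $v$ in $V$ beginning at a position $i$ is expected depends only on the number of $0$s of $V$ strictly before $i$, \emph{modulo $r$}; and second, that a run of $1$s of length at least $|v|$ ``locates'' itself relative to the expected decomposition of $V$. For the latter I would record a short lemma first. Write $V = v1^{c_1}v1^{c_2}v\cdots$ as the concatenation of consecutive expected occurrences $E_0, E_1, \dots$ of $v$, so that $E_j$ consists of the $(jr+1)$-th through $((j+1)r)$-th $0$s of $V$ and the run between $E_{j-1}$ and $E_j$ has length $c_j = L(jr)$. Since any maximal run of $1$s lying strictly inside some $E_j$ has length $a_k \le |v|-2$, every maximal run of $1$s in $V$ of length at least $|v|$ must be one of the gap runs $1^{c_j}$; hence the $0$ immediately preceding such a run is the last $0$ of some $E_{j-1}$, and the $0$ immediately following it is the first $0$ of $E_j$.

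For part (a), let $M$ be the number of $0$s of $V$ strictly before position $i$, so that the $0$ at $i$ is the $(M+1)$-th $0$ of $V$, the $j$-th $0$ of $\alpha$ is the $(M+j)$-th $0$ of $V$, and the occurrence of $v$ at $i$ is expected exactly when $M \equiv 0 \mod r$. Suppose $b_l \ge |v|$ for some $0 < l \le s$. If $l < s$ then $b_l$ is a genuine maximal run of $V$, so by the lemma the $(l+1)$-th $0$ of $\alpha$ — that is, the $(M+l+1)$-th $0$ of $V$ — is the first $0$ of an expected copy; thus $M + l \equiv 0 \mod r$, and therefore $M \equiv 0$ iff $l \equiv 0 \mod r$. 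If $l = s$, the maximal run of $V$ beginning just after the $s$-th $0$ of $\alpha$ has length at least $b_s \ge |v|$, so that $0$, which is the $(M+s)$-th $0$ of $V$, is the last $0$ of an expected copy, giving $M + s - 1 \equiv r - 1 \mod r$; again $M \equiv 0$ iff $l = s \equiv 0 \mod r$. Either way the occurrence at $i$ is expected iff $l \equiv 0 \mod r$.

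For part (b), assume $b_l < |v|$ for all $0 < l \le s$. Then every maximal run of $1$s appearing entirely inside $\alpha$ — in particular every gap run $1^{c_j}$ contained in $\alpha$ — has length $< |v|$. Since $\alpha$ begins with a full copy of $v$ and each subsequent ``gap run followed by an expected copy'' unit occupies fewer than $2|v|$ positions, a length count against $|\alpha| = 2t|v|$ yields $s > rt$, so $b_r, b_{2r}, \dots, b_{tr}$ are genuine maximal runs of $V$. Write the $0$ at $i$ as the $(mr+p)$-th $0$ of $V$ with $1 \le p \le r$; it is the $p$-th $0$ of $E_m$, and the occurrence at $i$ is expected iff $p = 1$. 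The $j$-th $0$ of $\alpha$ is the $(mr+p+j-1)$-th $0$ of $V$, so $b_{kr} = L\big((m+k)r + p - 1\big)$ for $k = 1, \dots, t$. If $p = 1$ these are $L(mr+r), L(mr+2r), \dots, L(mr+tr)$ with $mr \equiv 0 \mod r$, so by Corollary~\ref{corexpV} they are not all equal. If $2 \le p \le r$, then $(m+k)r + p - 1 \equiv p - 1 \mod r$ with $0 < p - 1 < r$, so by Lemma~\ref{lemmacharbuilt}(a) each $b_{kr}$ equals $a_{p-1}$, hence they are all equal. Therefore the occurrence at $i$ is expected iff $b_r, \dots, b_{tr}$ are not all equal, which is part (b).

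The step I expect to be fussiest is the length bound $s > rt$ in part (b). The copy of $v$ beginning at $i$ need not itself be expected, and an unexpected occurrence straddles exactly two consecutive expected copies, so the $0$ at $i$ can sit at an interior position of its expected copy $E_m$; one must then follow the expected decomposition of $V$ from that $0$ onward and, using that the intervening gap runs all have length $< |v|$, count carefully how many complete copies of length $|v|$ — and hence how many $0$s — are forced to lie inside a window of length $2t|v|$. Everything else, including part (a), the dichotomy in part (b), and the short lemma, is bookkeeping: translating statements about the positions of the $0$s of $\alpha$ into congruences modulo $r$ and then invoking Corollary~\ref{corexpV} and Lemma~\ref{lemmacharbuilt}(a).
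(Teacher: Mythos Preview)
Your argument is correct and follows essentially the same route as the paper: both identify $b_l$ with $L(n+l)$ (writing $i=i_n$), observe that a run of $1$s of length $\ge |v|$ cannot be an internal run of any expected copy and hence forces $n+l\equiv 0\pmod r$, and then invoke Corollary~\ref{corexpV} together with Lemma~\ref{lemmacharbuilt}(a) for part~(b). The paper's write-up is a bit leaner --- it avoids your $l<s$ versus $l=s$ split in (a) by using only the inequality $b_l\le L(n+l)$, and it dispatches the bound $s>rt$ with the single line ``if $s\le tr$ then $|\alpha|\le t|v|+t(|v|-1)<2t|v|$'' --- so the step you flag as ``fussiest'' is, in the paper's telling, the most routine.
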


\begin{proof}
Let $n \in \N$ so that $i = i_n$.  Note that for $0 < l < s$, we have $b_l = L(n+l)$.  

Proof of (a):  If $b_l \geq |v|$, then $L(n+l) \geq |v|$.  This implies that $L(n+l) > a_k$ for each $0 < k < r$.  Thus, $n+l \not\equiv k \mod r$ for each $0  < l < r$.  Therefore, $n+l \equiv 0 \mod r$.  Thus, $n \equiv 0 \mod r$ iff $l \equiv 0 \mod r$.  We know that the occurrence of $v$ in $V$ beginning at $i_n$ is expected iff $n \equiv 0 \mod r$.  Therefore, the occurrence of $v$ in $V$ beginning at $i_n = i$ is expected iff $l \equiv 0 \mod r$.

Proof of (b):  Suppose that $b_l < |v|$ for all $0 < l \leq s$.   If $s \leq tr$, then simple counting shows that $|\alpha| \leq t|v| + t (|v| - 1) < 2t|v|$.  Therefore, $s > tr$.

We know that the occurrence of $v$ in $V$ beginning at $i_n$ is expected iff $n \equiv 0 \mod r$.  By Corollary \ref{corexpV}, we know $n \equiv 0 \mod r$ iff $L(n+r), L(n + 2r), \ldots, L(n + tr)$ are not all equal.  But we also know that $L(n + r) = b_r, \ldots, L(n+tr) = b_{tr}$.  Therefore, the occurrence of $v$ in $V$ beginning at $i$ is expected iff 
$b_r, b_{2r}, \ldots, b_{tr}$ are not all equal.
\end{proof}

The crucial fact about Corollary \ref{corexp} is that to determine whether an occurrence of $v$ in $V$ beginning at $i$ is expected it is sufficient to know the subword of $V$ of length $2t|v|$ that begins at $i$.

\subsubsection{Expectedness in an element of a rank-1 system}
\label{secexpelement}

For the rest of Section \ref{secexpelement}, fix $(X, \sigma)$, a non-degenerate rank-1 system and fix $v \in \mathcal{F}$.  Let $v = 0 1^{a_1} 0 1^{a_2} \ldots  1^{a_{r-1}}0$.

\begin{definition}
\label{defbuiltx}
We say that $x \in X$ is {\em built from} $v$ if there is a collection $E_v$ of occurrences of $v$ in $x$ so that each 0 in $x$ is part of exactly one element of $E_v$.
\end{definition}

Remarks:
\begin{enumerate}
\item  If the collection $E_v$ witnesses that $x$ is built from $v$, then distinct elements of $E_v$ must actually be disjoint (since $v$ begins with 0).
\item  If $x$ is constantly 1, i.e., $x$ contains no occurrence of 0, then $x$ is built from $v$ vacuously.
\end{enumerate}

There are three types of elements of $X$ that have an occurrence of 0.  If $x \in X$ is has an occurrence of 0, then either:  
\begin{enumerate}
\item [ (a) ] $x$ has a first occurrence of 0;
\item [ (b) ] $x$ has a last occurrence of 0; or
\item [ (c) ] $x$ has an occurrence of 0, but neither a first nor last occurrence of 0.
\end{enumerate} 

By Corollary \ref{cor} no $x \in X$ can satisfy both (1) and (2) above.  For each of these possibilities we have a lemma, which we state here without proof.

\begin{lemma}
\label{lemma1a}
Suppose $x \in X$ has a first occurrence of 0 and that $E_v$ witnesses that $x$ is built from $v$.  The following statements (about $i \in \Z$) are equivalent:
\begin{enumerate}
\item [ (i) ] $x(i) = 0$ and $|\{j < i : x(j) = 0\}|$ is a multiple of $r$.
\item [ (ii) ] some element of $E_v$ begins at position $i$ in $x$.
\end{enumerate}
\end{lemma}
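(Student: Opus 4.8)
The plan is to use the existence of a \emph{first} occurrence of $0$ in $x$ to locate the leftmost element of $E_v$ exactly, and then propagate rightward by a short induction.

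First I would set up notation: let $p_0 < p_1 < p_2 < \cdots$ enumerate, in increasing order, the positions of the occurrences of $0$ in $x$. This list has a well-defined least element $p_0$ because $x$ has a first occurrence of $0$, and it is infinite by Corollary~\ref{cor}(b). Note that $v = 0 1^{a_1} 0 \cdots 1^{a_{r-1}} 0$ contains exactly $r$ occurrences of $0$, so any occurrence of $v$ in $x$ beginning at a position $p_n$ covers precisely the zeros at $p_n, p_{n+1}, \dots, p_{n+r-1}$ and ends at position $p_{n+r-1}$; moreover, by the first remark after Definition~\ref{defbuiltx}, distinct elements of $E_v$ are pairwise disjoint.

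Next I would prove, by induction on $k \in \N$, that the elements of $E_v$ begin precisely at the positions $p_{kr}$. For the base case, the element of $E_v$ containing the zero at $p_0$ is an occurrence of $v$ beginning at some $q \le p_0$ with $x(q) = 0$; since $p_0$ is the first $0$ in $x$, this forces $q = p_0$. For the inductive step, suppose some element of $E_v$ begins at $p_{kr}$; it covers the zeros $p_{kr}, \dots, p_{kr+r-1}$ and ends at $p_{kr+r-1}$, so by disjointness no element of $E_v$ begins at any of $p_{kr+1}, \dots, p_{kr+r-1}$, while the zero at $p_{kr+r}$ lies in some element of $E_v$ beginning at a position $q'$; disjointness from the element at $p_{kr}$ gives $q' > p_{kr+r-1}$, hence $q' \ge p_{kr+r}$, and since $q'$ covers $p_{kr+r}$ we get $q' \le p_{kr+r}$, so $q' = p_{(k+1)r}$. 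Thus the set of positions at which elements of $E_v$ begin is exactly $\{ p_{kr} : k \in \N \}$.

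Finally I would combine the pieces. Condition (ii) holds for $i$ exactly when $i = p_{kr}$ for some $k \in \N$. Condition (i) holds for $i$ exactly when $x(i) = 0$ --- equivalently $i = p_m$ for a unique $m$ --- and $|\{ j < i : x(j) = 0 \}| = m$ is a multiple of $r$, i.e.\ when $i = p_{kr}$ for some $k \in \N$. Hence (i) and (ii) describe the same set of $i$, which is the claim. The argument is essentially bookkeeping; the only place requiring a little care is the inductive step, where disjointness of the elements of $E_v$ together with the fact that each such element begins at a zero must be used to force the next element to begin at the very next available zero. I do not anticipate any genuine obstacle.
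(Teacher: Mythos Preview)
Your proof is correct. The paper actually states this lemma without proof, so there is nothing to compare against; your argument---anchoring the leftmost element of $E_v$ at the first zero and then inducting rightward using disjointness---is exactly the sort of routine bookkeeping the authors had in mind when they omitted the details.
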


\begin{lemma}
\label{lemma2a}
Suppose $x \in X$ has a last occurrence of 0 and that $E_v$ witnesses that $x$ is built from $v$.  The following statements (about $i \in \Z$) are equivalent.
\begin{enumerate}
\item [ (i) ] $x(i) = 0$ and $|\{j \geq i : x(j) = 0\}|$ is a multiple of $r$.
\item [ (ii) ] Some element of $E_v$ begins at position $i$ in $x$.
\end{enumerate}
\end{lemma}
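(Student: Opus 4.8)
The plan is to argue directly from the structure of $E_v$, in a way that parallels the argument for Lemma~\ref{lemma1a} but with occurrences of $0$ counted from the right rather than from the left. First I would record the relevant structural facts about $E_v$: by the remark following Definition~\ref{defbuiltx} the elements of $E_v$ are pairwise disjoint occurrences of $v$ in $x$, each such occurrence contains exactly $r$ occurrences of $0$ (since $v = 01^{a_1}0\cdots 1^{a_{r-1}}0$), and each begins and ends with $0$. Since $x$ has an occurrence of $0$, Corollary~\ref{cor} gives that it has infinitely many; as $x$ also has a last occurrence of $0$, I can list the positions of all occurrences of $0$ in $x$ as $q_1 > q_2 > q_3 > \cdots$, where $q_1$ is the position of the last $0$.

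Next I would prove, by induction on $k \geq 1$, the following claim: counting the elements of $E_v$ from the right, the $k$-th one, $e_k$, covers exactly the occurrences of $0$ at positions $q_{(k-1)r+1}, q_{(k-1)r+2}, \ldots, q_{kr}$, and in particular begins at position $q_{kr}$. For $k = 1$: the element of $E_v$ containing the $0$ at $q_1$ must in fact end at $q_1$, since its last symbol is a $0$ of $x$ and no $0$ of $x$ lies to the right of $q_1$; this forces it to be the rightmost element $e_1$ of $E_v$, its $r$ occurrences of $0$ are then the $r$ rightmost ones (those at $q_1, \ldots, q_r$), and since $v$ begins with $0$, $e_1$ begins at $q_r$. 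The inductive step is essentially the same: granting that $e_1, \ldots, e_k$ cover precisely the occurrences of $0$ at $q_1, \ldots, q_{kr}$, the $0$ at $q_{kr+1}$ belongs to some element of $E_v$ disjoint from all of $e_1, \ldots, e_k$, which must therefore be $e_{k+1}$, and it must end at $q_{kr+1}$ because the next $0$ to its right, at $q_{kr}$, already belongs to $e_k$; it then covers $q_{kr+1}, \ldots, q_{(k+1)r}$ and begins at $q_{(k+1)r}$.

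The one point in this induction that I expect to require the most care is the implicit use of the fact that the elements of $E_v$ are laid out in $x$ in a genuine left-to-right order, with no interleaving, so that ``the $k$-th element from the right'' is well defined and consecutive elements are separated only by blocks of $1$s; this follows from disjointness together with the requirement that every $0$ of $x$ be covered by $E_v$, but it is the step whose phrasing needs the most attention. Everything else is bookkeeping.

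Finally I would read off the equivalence. By the claim, some element of $E_v$ begins at position $i$ if and only if $i = q_{kr}$ for some $k \geq 1$, equivalently $x(i) = 0$ and $|\{j \geq i : x(j) = 0\}| = kr$ for some $k \geq 1$. Since $x(i) = 0$ already forces $|\{j \geq i : x(j) = 0\}| \geq 1$, this is the same as saying that $x(i) = 0$ and $|\{j \geq i : x(j) = 0\}|$ is a multiple of $r$, which is exactly the equivalence of (i) and (ii). (Alternatively, one could reduce to Lemma~\ref{lemma1a} by passing to the reversed word $\bar x(i) = x(-i)$, which has a first occurrence of $0$ and is built from the reversal of $v$ via the reversals of the elements of $E_v$; I would present the direct argument above, since the reversal route requires extra bookkeeping to set up.)
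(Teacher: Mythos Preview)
The paper states this lemma without proof (it is one of three lemmas introduced with ``which we state here without proof''), so there is no argument to compare against. Your direct inductive argument---listing the zeros from the right as $q_1 > q_2 > \cdots$ and showing that the $k$-th element of $E_v$ from the right covers exactly the zeros at positions $q_{(k-1)r+1}, \ldots, q_{kr}$ and hence begins at $q_{kr}$---is correct and is precisely the sort of straightforward verification the authors presumably had in mind. The alternative route via reversal that you mention would also work, but as you say it adds bookkeeping without adding insight.
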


If $x \in X$ has an occurrence of 0, but neither a first nor a last occurrence of 0, then let $(i_n : n \in \Z)$ enumerate, in an order-preserving way, the positions of the occurrences of 0 in $x$.  Define $L_x : \Z \rightarrow \N$ by $L_x(n) = i_{n} - i_{n-1} -1$.  So, $L_x(n)$ is the number of 1s between the 0 at position $i_{n-1}$ in $x$ and the 0 at position $i_{n}$ in $x$.  Note that since $x $ is not periodic, neither is $L_x$.

\begin{lemma}
\label{lemma3a}
Suppose $x \in X$ has an occurrence of 0, but neither a first nor last occurrence of 0.  Suppose further that $E_v$ witnesses that $x$ is built from $v$.  Then the following statements (about $n \in \Z$) are equivalent:
\begin{enumerate}
\item [ (i) ] Some element of $E_v$ begins at position $i_n$ in $x$.
\item [ (ii) ] The function $L_x$ is not constant on the congruence class of $n$ (mod $r$), but is constant on each other congruence class mod $r$.
\end{enumerate}
\end{lemma}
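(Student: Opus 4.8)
The plan is to first analyze the combinatorial structure of the witnessing collection $E_v$ and then deduce both implications at once. Since $v = 0 1^{a_1} 0 \cdots 1^{a_{r-1}} 0$ has exactly $r$ occurrences of $0$ and any two distinct elements of $E_v$ are disjoint (as remarked after Definition~\ref{defbuiltx}), each element of $E_v$ is an occurrence of $v$ whose $0$'s form $r$ consecutive $0$'s of $x$, and these length-$r$ blocks partition the set of all $0$'s of $x$. Because $x$ has an occurrence of $0$ but no first or last one, $x$ has (using Corollary~\ref{cor}) infinitely many $0$'s in each direction; hence the elements of $E_v$, listed in order of position, begin precisely at the positions $i_m$ with $m \equiv c \pmod r$, for a fixed residue $c$, and the element beginning at $i_m$ accounts for the $0$'s at $i_m, i_{m+1}, \dots, i_{m+r-1}$.

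Reading off the internal gaps of $v$, for every $m \equiv c \pmod r$ I get $L_x(m+j) = a_j$ for $1 \le j \le r-1$. As $j$ runs over $1, \dots, r-1$ these exhaust all residues mod $r$ other than that of $c$, so $L_x$ is constant (with value $a_j$) on the congruence class of $c+j$ for each such $j$; that is, $L_x$ is constant on every congruence class mod $r$ except possibly that of $c$. I would then show $L_x$ is \emph{not} constant on the class of $c$: if it were, $L_x$ would be periodic with period $r$, and since $i_{m+r} - i_m = r + \sum_{l=1}^{r} L_x(m+l)$ the right-hand side would be a fixed positive integer $p$ independent of $m$, so the set of $0$-positions of $x$ would be invariant under translation by $p$, making $x$ periodic and contradicting Corollary~\ref{cor1} (recall $x$ has an occurrence of $0$). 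Hence the class of $c$ mod $r$ is exactly the class on which $L_x$ fails to be constant.

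Given this, the two implications are immediate. For (i) $\Rightarrow$ (ii): if an element of $E_v$ begins at $i_n$ then $n \equiv c \pmod r$, and the previous paragraph says precisely that $L_x$ is not constant on the class of $n$ but is constant on every other class mod $r$. For (ii) $\Rightarrow$ (i): if $L_x$ is not constant on the class of $n$, then since the class of $c$ is the only class with this property we must have $n \equiv c \pmod r$, so an element of $E_v$ begins at $i_n$.

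I expect the genuine work to be the first paragraph — verifying carefully that the starting positions of the elements of $E_v$ really do form a single congruence class of the $0$-indices modulo $r$. This is the bi-infinite analogue of Lemma~\ref{lemma1a} and Lemma~\ref{lemma2a} (whose proofs are omitted), and it rests on disjointness of the occurrences in $E_v$, on $v$ beginning with $0$, and on $x$ having infinitely many $0$'s in both directions. Everything afterwards is elementary congruence arithmetic together with the single appeal to non-periodicity of $x$.
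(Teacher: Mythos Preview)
Your argument is correct. Note that the paper actually states Lemma~\ref{lemma3a} (together with Lemmas~\ref{lemma1a} and~\ref{lemma2a}) without proof, so there is no paper argument to compare against; what you have written is exactly the natural proof the authors presumably had in mind, and the only substantive step---showing that the starting indices of elements of $E_v$ form a single residue class mod $r$, and then invoking non-periodicity via Corollary~\ref{cor1} to rule out $L_x$ being constant on that class---is handled cleanly.
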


\begin{proposition}
\label{propexp1}
If $x \in X$ is built from $v$, then there is a unique collection $E_v$ of occurrences of $v$ in $x$ so that each 0 in $x$ is part of exactly one element of $E_v$.
\end{proposition}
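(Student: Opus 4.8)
The plan is to observe that a witnessing collection $E_v$ is completely determined by the set of positions at which its elements begin, and then to invoke Lemmas \ref{lemma1a}, \ref{lemma2a}, and \ref{lemma3a} to see that this set of positions depends only on $x$ and $v$, not on the collection. (Existence of at least one such collection is part of the hypothesis that $x$ is built from $v$, so only uniqueness needs to be established.)

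First I would note that if $E_v$ witnesses that $x$ is built from $v$, then by the first remark following Definition \ref{defbuiltx} the elements of $E_v$ are pairwise disjoint, so no two of them begin at the same position. Since each element of $E_v$ is simply an occurrence of $v$, i.e.\ a block of length $|v|$ determined by its starting position, the collection $E_v$ is recovered from the set $S(E_v)=\{i\in\Z:\textnormal{some element of $E_v$ begins at position $i$ in $x$}\}$. Thus it suffices to show that $S(E_v)$ is the same for every witnessing $E_v$.

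Next I would split into cases according to the classification of those $x\in X$ that have an occurrence of $0$, together with the trivial remaining case. If $x$ has no occurrence of $0$, then, since $v$ contains a $0$, $x$ has no occurrence of $v$ at all, so $E_v=\emptyset$ is forced. If $x$ has an occurrence of $0$, then by Corollary \ref{cor} it has infinitely many, hence it cannot have both a first and a last occurrence of $0$; so exactly one of the alternatives (a), (b), (c) listed before Lemma \ref{lemma1a} holds. In case (a), Lemma \ref{lemma1a} gives $S(E_v)=\{i:x(i)=0\textnormal{ and }|\{j<i:x(j)=0\}|\equiv 0\bmod r\}$; in case (b), Lemma \ref{lemma2a} gives the analogous description using $\{j\geq i:x(j)=0\}$; in case (c), Lemma \ref{lemma3a} describes $S(E_v)$, in terms of the enumeration $(i_n:n\in\Z)$ of the positions of $0$ in $x$, as the set of $i_n$ such that $L_x$ fails to be constant on the congruence class of $n$ (mod $r$) but is constant on every other congruence class mod $r$. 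In each case the right-hand side makes no reference to $E_v$, so $S(E_v)$, and therefore $E_v$, is uniquely determined.

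Since the three lemmas are being quoted rather than reproved, I do not expect any real obstacle here; the only points that require a little care are the observation that a witnessing collection is reconstructible from its set of starting positions and the verification, via Corollary \ref{cor}, that the case analysis is exhaustive and that the ``first occurrence'' and ``last occurrence'' alternatives are mutually exclusive.
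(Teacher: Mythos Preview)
Your proposal is correct and follows essentially the same route as the paper: the paper's proof simply observes that if $x$ has no occurrence of $0$ then $E_v$ must be empty, and otherwise invokes Lemmas~\ref{lemma1a}, \ref{lemma2a}, and \ref{lemma3a} directly. Your version spells out the reduction to the set of starting positions and the exhaustiveness of the case split more explicitly, but the argument is the same.
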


\begin{proof}
If $x$ has no occurrence of 0, then only the empty set will work as $E_v$.  That the proposition holds when $x$ has an occurrence of 0 follows immediately from Lemmas \ref{lemma1a},  \ref{lemma2a}, and  \ref{lemma3a}.\end{proof}

\begin{definition}
\label{defexp}
Let $x \in X$ be built from $v$.  An occurrence of $v$ in $x$ is {\em expected} if it belongs to $E_v$ and {\em unexpected} otherwise.
\end{definition}

The next corollary follows immediately from Definition \ref{defbuiltx}, Proposition \ref{propexp1} and Definition \ref{defexp}.
\begin{corollary}
\label{corexppreceq}
Suppose $x \in X$ is built from $v$ and that $u \preceq v$.  Then $x$ is built from $u$ and every expected occurrence of $u$ in $x$ is completely contained in an expected occurrence of $v$. 
\end{corollary}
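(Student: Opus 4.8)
The plan is to construct the witnessing collection $E_u$ directly from $E_v$ by subdividing each expected occurrence of $v$ into occurrences of $u$, and then to invoke the uniqueness clause of Proposition~\ref{propexp1} to identify the resulting $E_u$ with the collection of expected occurrences of $u$.

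First I would record the structural consequence of $u \preceq v$. Writing $v = u 1^{a_1} u 1^{a_2} \cdots u 1^{a_{k-1}} u$ for suitable $k \geq 1$ and $a_1, \ldots, a_{k-1} \in \N$, each occurrence of $v$ in $x$ beginning at some position $i$ carries with it $k$ canonical occurrences of $u$ in $x$, beginning at the positions $i$, $i + |u| + a_1$, and so on. These $k$ occurrences of $u$ are pairwise disjoint, and every $0$ lying inside that occurrence of $v$ lies inside exactly one of them, since the only symbols of $v$ not belonging to one of its $k$ constituent copies of $u$ are $1$s.

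Next I would set $E_u$ to be the union, over all $w \in E_v$, of the $k$ canonical occurrences of $u$ inside $w$. Using that $E_v$ witnesses that $x$ is built from $v$ — so each $0$ of $x$ lies in exactly one $w \in E_v$ — together with the previous paragraph, I would check that each $0$ of $x$ lies in exactly one element of $E_u$; hence $x$ is built from $u$ in the sense of Definition~\ref{defbuiltx}. The case in which $x$ has no occurrence of $0$ (i.e.\ $x$ is constantly $1$) is handled vacuously, with $E_v = E_u = \emptyset$.

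Finally, Proposition~\ref{propexp1} guarantees that the collection witnessing that $x$ is built from $u$ is unique, so the $E_u$ just constructed must coincide with the collection of expected occurrences of $u$ in $x$ from Definition~\ref{defexp}. Since by construction every member of $E_u$ is contained in a member of $E_v$, every expected occurrence of $u$ in $x$ is completely contained in an expected occurrence of $v$ in $x$, which is the claim. I do not expect a genuine obstacle; the only point needing a moment's care is verifying that the canonical copies of $u$ inside an occurrence of $v$ are literally occurrences of $u$ in $x$ and that they account for all the $0$s of that occurrence of $v$, both of which follow at once from the definition of $u \preceq v$.
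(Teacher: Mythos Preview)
Your proposal is correct and is precisely the argument the paper has in mind: the paper states without further detail that the corollary follows immediately from Definition~\ref{defbuiltx}, Proposition~\ref{propexp1}, and Definition~\ref{defexp}, and your proof unpacks exactly those three ingredients in the natural way.
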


\subsubsection{Connections between a rank-1 word and elements of its associated system}
\label{secexpconnect}

For the rest of Section \ref{secexpconnect}, fix $V \in \mathcal{R}$ and let $(X, \sigma)$ be the non-degenerate rank-1 system associated to $V$.

\begin{proposition}
\label{propexp2}
If $V$ is built from $v$, then each $x \in X$ is built from $v$.
\end{proposition}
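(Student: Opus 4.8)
The plan is to build the witnessing collection $E_v$ directly out of the local test for expectedness supplied by Corollary~\ref{corexp}. The crucial content of that corollary is a \emph{position‑independent} rule: the subword of $V$ of length $2t|v|$ that begins at a given occurrence of $v$ determines whether that occurrence is expected. So I would first record this as a function $R$ defined on all length‑$2t|v|$ subwords of $V$ that begin with an occurrence of $v$, with values \textbf{expected}/\textbf{unexpected}, such that an occurrence of $v$ in $V$ beginning at $k$ is expected iff $R$ returns \textbf{expected} on the length‑$2t|v|$ subword of $V$ starting at $k$. (Here $t$ is as fixed in Section~\ref{secexpword}; it exists because $V\in\mathcal{R}$.)

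Next, fix $x\in X$. If $x$ has no occurrence of $0$ it has no occurrence of $v$, and $E_v=\emptyset$ vacuously witnesses that $x$ is built from $v$. Otherwise I would \emph{define} $E_v$ to consist of exactly those occurrences of $v$ in $x$, say beginning at $j$, for which the length‑$2t|v|$ subword of $x$ starting at $j$ is sent to \textbf{expected} by $R$. This makes sense: since every finite subword of $x$ is a subword of $V$ and $x$ has an occurrence of $v$ at $j$, that length‑$2t|v|$ subword is a subword of $V$ beginning with $v$, so $R$ applies to it. By Definition~\ref{defbuiltx}, the whole proposition then reduces to the claim that every $0$ in $x$ is part of exactly one element of $E_v$.

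To prove this claim, fix $i$ with $x(i)=0$. Every occurrence of $v$ in $x$ covering position $i$ begins somewhere in $[\,i-|v|+1,\ i\,]$. I would take a single auxiliary window: the subword $w$ of $x$ of length $|v|+2t|v|$ beginning at position $i-|v|+1$. Being a finite subword of $x$, $w$ is a subword of $V$, and this identification matches position $i$ of $x$ with a position $p$ of $V$ with $V(p)=0$; it also sets up a bijection between the occurrences of $v$ in $x$ covering position $i$ and the occurrences of $v$ in $V$ covering position $p$. The window length $|v|+2t|v|$ is chosen just large enough that, under this bijection, the length‑$2t|v|$ test‑subword of each such occurrence in $x$ coincides verbatim with the corresponding test‑subword in $V$. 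Hence, by the defining property of $R$, an occurrence of $v$ in $x$ covering $i$ lies in $E_v$ iff the corresponding occurrence of $v$ in $V$ covering $p$ is expected. Since $V=v1^{a_1}v1^{a_2}\cdots$, the $0$ at position $p$ belongs to exactly one expected occurrence of $v$ in $V$; transporting this through the bijection shows that exactly one occurrence of $v$ in $x$ covering $i$ lies in $E_v$, which proves the claim.

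The only real difficulty is the bookkeeping in this last step: one must choose the auxiliary window $w$ long enough that every relevant test‑subword stays inside it (so that its $R$‑value can be read off "inside $V$"), and check that the indexing bijection respects both occurrences of $v$ and their test‑subwords. Conceptually everything rests on the single point that $R$ is a local, position‑independent rule; in particular no case split according to whether $x$ has a first or last occurrence of $0$ is needed, since $R$ inspects only symbols to the right of an occurrence and $x$ is bi‑infinite.
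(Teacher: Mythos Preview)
Your proposal is correct and follows essentially the same approach as the paper's proof: the paper also defines $E_v$ by applying the local test from Corollary~\ref{corexp} to the length-$2t|v|$ subword of $x$ starting at each occurrence of $v$, and then verifies the ``exactly one'' condition by taking an auxiliary window around a given $0$ in $x$, locating that window as a subword of $V$, and transporting the bijection between covering occurrences. Your abstraction of the test as a function $R$ and your window of length $|v|+2t|v|$ (the paper uses $|v|+2t|v|-1$, but the extra symbol is harmless) are cosmetic differences only.
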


\begin{proof}
Suppose $V$ is built from $v$ and let $x \in X$.  We first define $E_v$, a collection of occurrences of $v$ in $x$.  Suppose $x$ has an occurrence of $v$ beginning at $i$.  Let $\alpha = 01^{b_1} 0 1^{b_2} 0 \ldots  0 1^{b_s}$ be the subword of $x$ of length $2t|v|$  that begins at $i$. 

\begin{enumerate}
\item [ (a) ] If $b_k \geq |v|$ for some $0 < k \leq s$, then the occurrence of $v$ in $V$ beginning at $i$ belongs to $E_v$ iff $k \equiv 0 \mod r$.
\item [ (b) ] If $b_k < |v|$ for all $0 < k \leq s$, then $s > rt$ and the occurrence of $v$ in $V$ beginning at $i$ belongs to $E_v$ iff $b_r, b_{2r}, \ldots, b_{tr}$ are not all equal.
\end{enumerate}

That each occurrence of 0 is $x$ is a part of exactly one element of $E_v$ follows from Corollary \ref{corexp}.  Indeed, suppose $x$ has an occurrence of 0 at position $i$.  Let $\beta$ be the subword of $x$ beginning at $i - |v| +1$ and ending at $i + 2t|v| -1$.  Note that if $x$ has an occurrence of $v$ (say it begins at $i-r$) that contains the occurrence of 0 at position $i$, then the subword of $x$ of length $2t|v|$ and beginning at $i-r$ is completely contained in the occurrence of $\beta$ beginning at $i - |v| + 1$.

Let $j \in \N$ be such that $V$ has an occurrence of $\beta$ beginning at $j - |v| + 1$.  Note that $V$ has an occurrence of 0 at position $j$.  Note further that if  $V$ has an occurrence of $v$ (say it begins at $j-r$) that contains the occurrence of 0 at position $j$, then the subword of $V$ of length $2t|v|$ and beginning at $j-r$ is completely contained in the occurrence of $\beta$ beginning at $j - |v| + 1$.
 
It is clear that $x$ has an occurrence of $v$ beginning at $i - r$ that contains the occurrence of 0 at position $i$ iff $V$ has an occurrence of $v$ beginning at $j - r$ that contains the occurrence of 0 at position $j$.  Moreover, by Corollary \ref{corexp}, such an occurrence of $v$ in $x$ belongs to $E_v$ iff the corresponding occurrence of $v$ in $V$ is expected.

Since the occurrence of 0 in $V$ at position $j$ is contained in exactly one expected occurrence of $v$, the occurrence of 0 in $x$ at position $i$ is contained in part of exactly one element of $E_v$.
\end{proof}

Corollary \ref{corclopen} follows immediately from the way that $E_v$ was defined in the previous proof.  First, however, we need a definition.

\begin{definition}
$\empty$
\begin{enumerate}
\item [ (a) ] For any finite word $\alpha$ and $ i \in \Z$, $$U_{\alpha, i} = \{x \in X : \textnormal{$V$ has an occurrence of $\alpha$ beginning at $i$}\}$$
\item [ (b) ] For $v$ such that $V$ is built from $v$, $$E_{v, i} = \{x \in X : \textnormal{$V$ has an expected occurrence of $v$ beginning at $i$}\}$$
\end{enumerate}
\end{definition}

\begin{corollary}
\label{corclopen}
If $V$ is built from $v$ and $i \in \Z$, then $E_{v, i}$ is a clopen subset of $X$.
\end{corollary}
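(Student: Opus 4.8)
The plan is to read off from the proof of Proposition~\ref{propexp2} that the collection $E_v$ was constructed in a purely \emph{local} manner. Recall that $t$ is the constant fixed in Section~\ref{secexpword}. In that proof, given $x \in X$ with an occurrence of $v$ beginning at position $i$, whether that occurrence was placed in $E_v$ was decided by examining only $\alpha$, the subword of $x$ of length $2t|v|$ beginning at $i$: case~(a) looks at whether some exponent $b_k$ read off from $\alpha$ is $\geq |v|$ and, if so, at $k \bmod r$; case~(b) looks at whether $b_r, b_{2r}, \dots, b_{tr}$ (all of which lie among the exponents of $\alpha$, since then $s > rt$) are all equal. So the first step is to isolate this observation: membership of $x$ in $E_{v,i}$ depends only on the finite block $x(i), x(i+1), \dots, x(i + 2t|v| - 1)$.

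Next I would make this precise. Let $W$ be the set of all finite words $w$ of length $2t|v|$ that begin with $v$ and for which the rule of Proposition~\ref{propexp2}(a)--(b), applied with $w$ in place of $\alpha$, declares the initial occurrence of $v$ to be an element of $E_v$. The set $W$ is finite, being a subset of the finite set $\{0,1\}^{2t|v|}$. By the locality observation of the previous paragraph, for $x \in X$ we have $x \in E_{v,i}$ if and only if the length-$2t|v|$ subword of $x$ beginning at position $i$ lies in $W$; that is,
$$E_{v,i} = \bigcup_{w \in W} U_{w, i}.$$

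Finally I would invoke routine point-set topology on $\{0,1\}^\Z$. For each finite word $w$ and each $i \in \Z$, the set of $y \in \{0,1\}^\Z$ having an occurrence of $w$ beginning at $i$ is a basic clopen (cylinder) subset of $\{0,1\}^\Z$, so $U_{w,i}$, its intersection with $X$, is clopen in the subspace topology on $X$. A finite union of clopen sets is clopen, so $E_{v,i}$ is clopen, as desired. I do not anticipate any real obstacle: the entire content lies in the ``locality'' of the construction of $E_v$, which was precisely the point of Corollary~\ref{corexp} and of the way the proof of Proposition~\ref{propexp2} was organized; once that is noted, the remainder is bookkeeping with cylinder sets.
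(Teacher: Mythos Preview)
Your proposal is correct and is precisely the argument the paper has in mind: the paper states that the corollary ``follows immediately from the way that $E_v$ was defined in the previous proof,'' and you have simply unpacked this by observing that membership in $E_{v,i}$ depends only on the length-$2t|v|$ window beginning at $i$, so that $E_{v,i}$ is a finite union of cylinder sets $U_{w,i}$. There is nothing to add.
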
 

We also have the following.

\begin{proposition}
\label{propexpbasis}
If $U \subseteq X$ is non-empty and open, then there is some $v$ from which $V$ is fundamentally built and some $j \in \Z$ so that $E_{v, j} \subseteq U$.
\end{proposition}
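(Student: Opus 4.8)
The plan is to start with a nonempty open $U \subseteq X$ and a point $x \in U$, and exploit the fact that the sets $U_{\alpha, i}$ form a basis for the topology of $X$ (since $X$ is a subspace of $\{0,1\}^\Z$ with the product topology, and membership in $X$ is determined by finite subwords). So first I would fix $x \in U$ and a finite window: there is a finite word $\beta$ and an index $k \in \Z$ so that $x$ has an occurrence of $\beta$ beginning at $k$ and so that $U_{\beta, k} \subseteq U$ — concretely, $\beta$ records the values of $x$ on a long enough interval around the origin. Enlarging $\beta$ if necessary, I may assume $\beta$ begins and ends with $0$ (using Corollary \ref{cor}, every $x$ with an occurrence of $0$ has infinitely many, on both sides unless it has a first/last $0$; the degenerate-looking cases where $x$ is eventually constant $1$ on one side still have arbitrarily long finite subwords that begin and end with $0$ as long as $x$ has any $0$ at all — and if $x$ has no $0$, handle that case separately using that $1^n v$ is a subword of $V$ for arbitrarily large $n$, as shown in the proof of Proposition \ref{prop1}(c)). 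Since $\beta$ is a subword of $x$ and every finite subword of $x$ is a subword of $V$, $\beta$ occurs in $V$, hence $\beta$ occurs in some $w$ from which $V$ is built.

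The second main step is to promote this $w$ to an element $v \in B_V$: by Corollary \ref{corC}, since $V$ is non-degenerate and $w \in A_V$, there is some $v \in B_V$ with $w \preceq v$. Then $\beta$ is a subword of $v$ — say the occurrence of $\beta$ in $v$ begins at position $p$ and $v$ occurs in $V$ as an expected occurrence beginning at some position $q$ (every occurrence of $0$ in $V$, in particular the first $0$ of our chosen occurrence of $\beta$, lies in a unique expected occurrence of $v$; but I want the whole copy of $\beta$ inside a single expected copy of $v$, which is automatic once $\beta$ is a subword of $v$ and I locate an expected occurrence of $v$ in $V$ containing it). Concretely $V$ has an expected occurrence of $v$ at some position $j$, and within it $\beta$ occurs at $j + p$.

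The final step is to check that $E_{v, j} \subseteq U$ for a suitable choice of $j$. Here I want: if $y \in X$ has an expected occurrence of $v$ beginning at $j$, then $y$ has an occurrence of $\beta$ beginning at $j + p$, and $y \in U_{\beta, k}$ after shifting — i.e. I should choose the index $j$ (or compose with a shift, or rather just pick the translate) so that $j + p$ is precisely the position $k$ where I want $\beta$ to sit. Since $\beta$ literally occurs inside $v$ at offset $p$, any occurrence (expected or not) of $v$ beginning at $j$ forces an occurrence of $\beta$ beginning at $j+p$; so $E_{v,j} \subseteq U_{\beta, j+p}$, and choosing $j := k - p$ gives $E_{v, k-p} \subseteq U_{\beta, k} \subseteq U$. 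By Corollary \ref{corclopen}, $E_{v,j}$ is clopen, hence in particular a legitimate (nonempty — it contains $x$) subset; nonemptiness is what pins down that $x$ itself, which has $\beta$ at $k$, can be checked to have an \emph{expected} occurrence of $v$ at $k-p$ using Corollary \ref{corexp}, provided the window $\beta$ was chosen long enough (length at least $2t|v| + |v|$ around the relevant spot) to detect expectedness.

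The main obstacle I anticipate is the bookkeeping in this last step: I need the chosen window $\beta$ to be simultaneously (i) long enough that $U_{\beta,k} \subseteq U$, and (ii) long enough that, by Corollary \ref{corexp}, the occurrence of $\beta$ at $k$ actually \emph{certifies} that the occurrence of $v$ at $k - p$ is expected — not merely that an occurrence of $v$ is present. Since $v$ is only produced \emph{after} $\beta$ is chosen (via $w \preceq v$, and $v$ could be much longer than $\beta$), this is circular as stated. The fix is to first choose any $\beta_0$ with $U_{\beta_0, k} \subseteq U$, then choose $w \succeq$ (a word containing $\beta_0$) and $v \in B_V$ with $w \preceq v$, and only \emph{then} let $\beta$ be the subword of $x$ of length, say, $|v| + 2t|v|$ centered appropriately so that it both contains the relevant copy of $v$ as an expected occurrence (detectable by Corollary \ref{corexp}) and refines $\beta_0$; then $U_{\beta, k'} \subseteq U_{\beta_0, k} \subseteq U$ for the appropriate shift $k'$, and $E_{v, j} \subseteq U_{\beta, k'}$ for $j$ chosen to place the expected copy of $v$ correctly. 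Everything else is a routine translation between occurrences in $V$ and occurrences in elements of $X$, which is exactly what Corollary \ref{corexp} and the proof of Proposition \ref{propexp2} were set up to handle.
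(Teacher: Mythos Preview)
Your core argument is correct and follows the paper's approach: find a basic cylinder $U_{\alpha,i}\subseteq U$, locate $\alpha$ as a subword of some $v\in B_V$ at some offset $p$, and set $j=i-p$; then any (expected) occurrence of $v$ at $j$ forces $\alpha$ at $i$, so $E_{v,j}\subseteq U_{\alpha,i}\subseteq U$. At that point the proposition is proved.

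You overcomplicate the argument in two places. First, forcing $\beta$ to begin and end with $0$ is unnecessary and drags in casework about points with no $0$ or with a first/last $0$; the paper imposes no such condition on $\alpha$. Second, and more substantially, you spend the last third of the proposal trying to arrange $x\in E_{v,j}$ (hence nonemptiness), but the proposition does not ask for this, and your parenthetical claim ``it contains $x$'' is not justified: from ``$\beta$ occurs in $x$ at $k$'' and ``$\beta$ is a subword of $v$ at offset $p$'' you cannot conclude that $x$ has any occurrence of $v$ at $k-p$, expected or otherwise. The circularity you then diagnose (needing $\beta$ long enough to certify expectedness of a $v$ chosen only after $\beta$) is an artifact of this extra, unneeded goal; once you drop it, the two-round construction evaporates. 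The paper's version is also a bit cleaner on the bookkeeping: since every $v\in B_V$ is an initial segment of $V$, one simply takes $v$ with $|v|\ge k+|\alpha|$, where $k$ is the position of an occurrence of $\alpha$ in $V$; then the offset of $\alpha$ inside $v$ is $k$ itself, and $j=i-k$ works without any search for $p$.
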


\begin{proof}  Let $U \subseteq X$ be non-empty and open.  Let $\alpha$ be a finite word and $i \in \Z$ so that $U_{\alpha , i} \subseteq U$ is non-empty.  Since $U_{\alpha, i}$ is non-empty, there is some $k \in \N$ so that $V$ has an occurrence of $\alpha$ beginning at $k$.  Choose $v$ to be such that $V$ is fundamentally built from $v$ and so that $|v| \geq k + |\alpha|$.  Note that the occurrence of $v$ in $V$ beginning at $0$ completely contains the occurrence of $\alpha$ in $V$ beginning at $k$.  Thus, if any $x \in X$ has an occurrence of $v$ beginning at $l$, then $x$ has an occurrence of $\alpha$ beginning at $l + k$.  Let $j = i - k$, and notice that $E_{v, j} \subseteq U_{\alpha, i} \subseteq U$.    
\end{proof}

\begin{proposition}
\label{propexp3}
If $x \in X$ is not periodic but is built from $v$, then $v$ is an initial segment of $V$.
\end{proposition}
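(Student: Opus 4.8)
The plan is to bootstrap from Proposition~\ref{propexp2}: even though $x$ is only assumed to be built from $v$, it is automatically built from every term of the canonical generating sequence of $V$, and one can then compare the two tilings of $x$.

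First some reductions. Since $x$ is not periodic it is not the constant bi-infinite word $1$, so it has an occurrence of $0$; by Corollary~\ref{cor} it then has infinitely many $0$'s and a dense orbit, and by Proposition~\ref{propfirst} every finite subword of $V$ occurs in $x$. Let $(v_n : n \in \N)$ be the canonical generating sequence of $V$: each $v_n$ lies in $B_V \subseteq A_V$, the lengths $|v_n|$ tend to infinity, each $v_n$ is an initial segment of $V$ (because $V$ is built from $v_n$), and $v_n$ occurs in $x$. By Proposition~\ref{propexp2} the element $x$ is built from every $v_n$; let $E_{v_n}$ be the unique (by Proposition~\ref{propexp1}) collection of pairwise disjoint expected occurrences of $v_n$ in $x$ covering every $0$ of $x$, and let $E_v$ be the collection witnessing that $x$ is built from $v$.

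The heart of the matter is the following nesting statement:
\begin{quote}
for all sufficiently large $n$, every member of $E_v$ is contained in a member of $E_{v_n}$.
\end{quote}
Granting it, fix such an $n$ and any $Q = [q, q+|v_n|) \in E_{v_n}$ (one exists, since $x$ has a $0$ and every $0$ of $x$ lies in a member of $E_{v_n}$). Because $v_n$ begins with $0$, the point $q$ carries a $0$ of $x$, so $q$ lies in a unique member $P = [p, p+|v|)$ of $E_v$ with $p \le q$; by the nesting statement $P \subseteq Q$, hence $p = q$. Then $x$ restricted to $[q, q+|v|)$ is $v$ and $x$ restricted to $[q, q+|v_n|)$ is $v_n$, so $v$ is an initial segment of $v_n$, and therefore of $V$. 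In the special case that $x$ has a first occurrence of $0$ one avoids the nesting statement altogether: translating $x$ so that this first $0$ sits at position $0$, the members of $E_{v_n}$ and of $E_v$ through position $0$ must both begin at $0$ (nothing lies to their left), and one reads off directly that $v$ is an initial segment of $v_n$.

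Proving the nesting statement is the main obstacle. Fix $n$ with $|v_n|$ long and rich compared with $v$ (for instance $|v_n| > 2|v|$ and $v_n$ having more occurrences of $0$ than $v$), and suppose for contradiction that some $P = [p, p+|v|) \in E_v$ is contained in no member of $E_{v_n}$. Let $Q_1 = [q_1, q_1+|v_n|)$ be the member of $E_{v_n}$ containing the $0$ of $x$ at position $p$, and let $Q_2$ be the next member of $E_{v_n}$ (it exists: otherwise only $1$'s lie to the right of $Q_1$ and, as $v$ ends in $0$, $P$ would have to lie inside $Q_1$). Only $1$'s occur strictly between $Q_1$ and $Q_2$, and $|v| < |v_n|$, so $P$ can neither end among those $1$'s nor reach past $Q_2$; hence the copy of $v$ on $P$ is a concatenation $s\,1^{g}\,t$, where $s$ is a nonempty proper suffix of $v_n$ beginning at a $0$ of $v_n$, $t$ is a nonempty proper prefix of $v_n$ ending at a $0$ of $v_n$, and $g \ge 0$. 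The plan is to show this configuration is impossible: running the same analysis simultaneously through all sufficiently long $v_m$ and through $V$ itself — using that $v_n$ is an initial segment of $V$, that $V$ is built from $v_n$, and that $v_n$ is long and rich relative to $v$ (so every maximal block of $1$'s inside $v$ is short compared with $v_n$) — should force $v$ to straddle a boundary between consecutive expected copies of $v_m$ in $x$ for \emph{every} large $m$, which rigidly determines the $E_v$-tiling of $x$, hence the gap pattern in the expansion of $V$ in powers of $v_n$, contradicting the non-periodicity of $x$ (equivalently, the non-constancy of the gap sequence of the $E_v$-tiling). The underlying mechanism is the local detection of expectedness from Corollary~\ref{corexp}: membership of an occurrence of $v$ in $E_v$ depends only on a bounded window around it, so a straddling copy of $v$ imposes a window pattern that must recur throughout $V$. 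Carrying out this propagation argument carefully is the step I expect to demand the most work.
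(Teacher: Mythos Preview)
Your reduction to the nesting statement is correct as far as it goes, but the nesting statement itself is both unproved and unnecessarily strong, and your sketch for it is not a proof. The propagation idea you outline --- forcing every copy of $v$ to straddle an $E_{v_m}$-boundary for all large $m$ and deducing periodicity --- is vague at exactly the point where the real work lies, and it is not clear it can be made to go through without essentially reproving the proposition by other means.

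The paper avoids this altogether with a much lighter argument: you do not need every $E_v$-block to sit inside an $E_{v_n}$-block; you only need a \emph{single} index $i$ at which both an $E_v$-block and an $E_w$-block begin, for one fixed $w \in A_V$ with $|w| \ge |v|$. That alone shows $v$ is an initial segment of $w$, hence of $V$. Finding such an $i$ is done by the case analysis you already anticipated for the ``first~$0$'' case, extended via Lemmas~\ref{lemma1a}--\ref{lemma3a}. In the interesting case (no first or last $0$), Lemma~\ref{lemma3a} says the $E_v$-blocks start exactly at the $i_n$ with $n$ in the unique residue class mod $r$ on which $L_x$ is non-constant, and likewise for $w$ and $s$. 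If no $n$ lay in both classes, then for every $m$ either $m \not\equiv k \pmod r$ or $m \not\equiv l \pmod s$, whence $L_x(m) = L_x(m+rs)$ for all $m$; this makes $L_x$ periodic and hence $x$ periodic, contradicting the hypothesis. So a common starting point exists, and you are done in one line. Replace the nesting statement with this existence argument.
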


\begin{proof}
Suppose $x \in X$ is not periodic but is built from $v$.  Choose $w$ so that $V$ is built from $w$ and so that $|v| \leq |w|$.  By Proposition \ref{propexp2}, $x$ is built from $w$.  To show that $v$ is an initial segment of $V$, it suffices to show that there is some $i \in \Z$ so that $x$ has an occurrence of $v$ beginning at $i$ and also an occurrence of $w$ beginning at $i$.  Let $r$ be the number of occurrences of 0 in $v$ and $s$ be the number of occurrences of 0 in $w$.

Suppose $x$ has a first occurrence of 0; say the first occurrence of 0 in $x$ is at position $i$.  By Lemma \ref{lemma1a}, $x$ has an occurrence of $v$ beginning at $i$ and also an occurrence of $w$ beginning at $i$.  

Suppose that $x$ has a last occurrence of 0.  By Corollary \ref{cor}, we know that $x$ has infinitely many occurrences of 0.  Let $i$ be such that $x(i) = 0$ and $|\{j \geq i : x(j) = 0\}| = rs$.  By Lemma \ref{lemma2a}, $x$ has an occurrence of $v$ beginning at $i$ and also an occurrence of $w$ beginning at $i$.  

Now suppose that $x$ has an occurrence of 0, but neither a first nor last occurrence of 0.  We know that $x$ is built from $v$.  By Lemma \ref{lemma3a}, we can choose $k \in \Z$ so that $L_x$ is not constant on the congruence class of $k$ (mod $r$), but is constant on each other congruence class mod $r$.  Since we also know that $x$ is built from $w$, we can similarly choose $l$ so that $L_x$ is not constant on the congruence class of $l$ mod $s$, but is constant on each other congruence class mod $s$.

We claim that there exists $n \in \Z$ so that $n \equiv k \mod r$ and $n \equiv l \mod s$.  Suppose, toward a contradiction, that there is no such $n$.  Then for every $m \in \Z$, either $m$ is not congruent to $k$ mod $r$ or $m$ is not congruent to $l$ mod $s$.  In either case $L_x(m) = L_x(m + rs)$.  This implies that $L_x$ is periodic, which implies that $x$ is periodic.  This is a contradiction.

We now know there is some $n \in \Z$ so that $n \equiv k \mod r$ and $n \equiv l \mod s$.  By Lemma \ref{lemma3a}, some element of $E_v$ begins at position $i_n$ and some element of $E_w$ begins at position $i_n$.  Thus, $x$ has an occurrence of $v$ beginning at $i_n$ and also an occurrence of $w$ beginning at $i_n$.  
\end{proof}

\begin{proposition}
\label{proponetoone}
Let $V$ and $W$ be distinct elements of $ \mathcal{R}$ and let $(X, \sigma)$ and $(Y, \sigma)$ be the rank-1 systems associated to $V$ and $W$, respectively.  Then $X$ and $Y$ do not share any non-periodic points and thus $X \neq Y$.
\end{proposition}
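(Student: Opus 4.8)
The plan is to argue by contradiction: assume some non-periodic $x\in\{0,1\}^\Z$ lies in both $X$ and $Y$, and deduce that $V=W$. The whole argument rests on two facts already established, namely Proposition~\ref{propexp2} (if $W$ is built from $v$, then every element of $Y$ is built from $v$) and Proposition~\ref{propexp3} (a non-periodic element of $X$ that is built from $v$ forces $v$ to be an initial segment of $V$).

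First I would fix an arbitrary $v\in A_W$, i.e., a finite word from which $W$ is built. By Proposition~\ref{propexp2}, applied to $W$ and its associated system $Y$, every element of $Y$ is built from $v$; in particular $x$ is built from $v$. Since $x\in X$ is non-periodic and built from $v$, Proposition~\ref{propexp3}, applied to $V$ and its associated system $X$, gives that $v$ is an initial segment of $V$. On the other hand, $v$ is obviously an initial segment of $W$, since $W$ is built from $v$. Thus every $v\in A_W$ is a common initial segment of $V$ and $W$. Because $W$ is rank-1, $A_W$ is infinite and contains finite words of arbitrarily large length, so $V$ and $W$ agree on arbitrarily long initial segments; hence $V=W$, contradicting the hypothesis that $V$ and $W$ are distinct.

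Finally, to conclude $X\neq Y$: the systems are non-degenerate, so $X$ is a Cantor space and in particular uncountable, while by Proposition~\ref{prop1} it has at most one periodic point (the constant word $1$ in the non-minimal case, and none in the minimal case). Hence $X$ contains a non-periodic point, and by the previous paragraph that point cannot belong to $Y$; therefore $X\neq Y$.

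I do not anticipate a serious obstacle: once Propositions~\ref{propexp2} and~\ref{propexp3} are in hand, the statement simply records that the set $A_W$ of building words determines $W$ and is ``detected'' inside any shared non-periodic orbit. The only point needing a little care is that the non-periodicity hypothesis of Proposition~\ref{propexp3} is exactly what makes the argument run; a common periodic point (the constant word $1$) genuinely can be shared, so non-periodicity cannot be dropped from the statement.
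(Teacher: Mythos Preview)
Your proof is correct and follows essentially the same approach as the paper's: both argue by contradiction using Propositions~\ref{propexp2} and~\ref{propexp3} to show that the building words of one rank-1 word are initial segments of the other, forcing $V=W$. The only cosmetic difference is that the paper starts with $v\in A_V$ and concludes $v$ is an initial segment of $W$, while you start with $v\in A_W$ and conclude $v$ is an initial segment of $V$; you also spell out the final step $X\neq Y$ more explicitly than the paper does.
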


\begin{proof}
Suppose $x \in X \cap Y$ is not periodic.  If $V$ is built from $v$, then by Proposition \ref{propexp2}, $x$ is built from $v$ and by Proposition \ref{propexp3}, $v$ is an initial segment of $W$.  Since $V$ is built from arbitrarily long words, $V$ and $W$ agree on arbitrarily long initial segments and thus are equal.  This is a contradiction.
\end{proof}

\section{Isomorphisms between non-degenerate rank-1 systems}
\label{RS}

\subsection{Stable isomorphisms and replacement schemes}
\label{RSrest}

\begin{definition}
Let $(X, \sigma)$ and $(Y, \nu)$ be non-degenerate rank-1 systems.  A {\em topological isomorphism} (or {\em isomorphism}) from $(X, \sigma)$ to $(Y, \sigma)$ is a homeomorphism $\phi : X \rightarrow Y$ that commutes with $\sigma$.
\end{definition}

We want to understand when two non-degenerate rank-1 systems are isomorphic.  More generally, we want to understand all isomorphisms between non-degenerate rank-1 systems.  

\begin{definition}
An isomorphism $\phi : X \rightarrow Y$ between non-degenerate rank-1 systems is {\em stable} if there exist $v,w \in \mathcal{F}$ such that for all $x \in X$:
\begin{enumerate}
\item  $x$ is built from $v$ and $\phi (x)$ is built from $w$; and
\item  $x$ has an expected occurrence of $v$ beginning at $k$ iff $\phi (x)$ has an expected occurrence of $w$ beginning at $k$.
\end{enumerate}
In this case we also say the pair $(v,w)$ is a {\em replacement scheme} for $\phi$.
\end{definition}

An isomorphism $\phi$ is stable iff $\phi ^{-1}$ is stable, and $(v,w)$ is a replacement scheme for $\phi$ iff $(w,v)$ is a replacement scheme for $\phi^{-1}$.

In Theorem \ref{thmreplacement} below, we will show that every isomorphism between non-degenerate rank-1 systems is nearly stable.  First, however, we will describe conditions on $V$ and $W$ that are necessary and sufficient for the existence of a stable isomorphism between $(X, \sigma)$ and $(Y, \sigma)$. 

\begin{definition}
Let $V, W \in \mathcal{R}$ and $v, w \in \mathcal{F}$.  The pair $(v,w)$ is a {\em replacement scheme} for $V$ and $W$ if:
\begin{enumerate}
\item  $V$ is built from $v$ and $W$ is built from $w$; and
\item  $V$ has an expected occurrence of $v$ beginning at $k$ iff $W$ has an expected occurrence of $w$ beginning at $k$.
\end{enumerate}
\end{definition}

\begin{proposition}
\label{propreplacement}
Let $V, W \in \mathcal{R}$ and $v, w \in \mathcal{F}$.  The pair $(v,w)$ is a replacement scheme for $V$ and $W$ iff $(v, w)$ is a replacement scheme for some stable isomorphism $\phi : X \rightarrow Y$.
\end{proposition}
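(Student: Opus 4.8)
The plan is to prove the two directions separately, with the forward direction (replacement scheme for $V,W$ gives a stable isomorphism) being the substantive one. For the forward direction, suppose $(v,w)$ is a replacement scheme for $V$ and $W$, so $V$ is built from $v$, $W$ is built from $w$, and $V$ has an expected occurrence of $v$ beginning at $k$ iff $W$ has an expected occurrence of $w$ beginning at $k$. I would first record the structural consequence: since both $V$ and $W$ have their expected occurrences of $v$ (resp. $w$) starting at exactly the same positions, and the gaps between consecutive expected occurrences are filled with $1$s, the sequence of gap-lengths between consecutive expected $v$'s in $V$ equals the sequence of gap-lengths between consecutive expected $w$'s in $W$. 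The idea is then to define $\phi$ by ``reading off'' this common gap-pattern: given $x \in X$, by Proposition \ref{propexp2} $x$ is built from $v$, so by Proposition \ref{propexp1} it has a unique collection $E_v$ of expected occurrences of $v$; replace each such occurrence of $v$ by an occurrence of $w$, keeping the same starting position for the block's ``anchor'' and keeping the $1$-gaps between consecutive blocks the same, and stretch/shift coordinates accordingly. Concretely, one fixes the alignment so that an expected $v$ beginning at $0$ in $x$ corresponds to an expected $w$ beginning at $0$ in $\phi(x)$, and then the positions of all other blocks are forced because the gap sequence is the same; the bits strictly between blocks are $1$'s on both sides.

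The key steps, in order, would be: (1) show $\phi(x)$ is a well-defined bi-infinite word — here one must handle the three cases of Lemmas \ref{lemma1a}, \ref{lemma2a}, \ref{lemma3a} (a first occurrence of $0$, a last occurrence of $0$, or neither) to see the block decomposition of $x$ determines a unique block decomposition with $w$'s and the same gaps, hence a unique point; (2) show $\phi(x) \in Y$, i.e. every finite subword of $\phi(x)$ is a subword of $W$ — this follows because any finite window of $\phi(x)$ is contained in finitely many consecutive $w$-blocks-plus-gaps, whose gap pattern occurs in $W$ (since it occurs among the expected $w$'s of $W$, because the corresponding $v$-gap pattern occurs among the expected $v$'s of $x$ and hence, by Proposition \ref{propexp2} applied to $V$ being built from a long enough word, in $V$, hence equals a pattern of expected $v$'s in $V$, hence of expected $w$'s in $W$); (3) show $\phi$ commutes with $\sigma$ — shifting $x$ by $1$ shifts the block decomposition, and the construction is equivariant up to the bookkeeping of how block-starts move, so one checks $\phi(\sigma x) = \nu(\phi x)$ directly from the definition; (4) build $\phi^{-1}$ symmetrically using the replacement scheme $(w,v)$ for $W$ and $V$ (which holds by symmetry of the definition), and check $\phi^{-1}\circ\phi = \mathrm{id}$; (5) continuity of $\phi$ and $\phi^{-1}$ — this is where Corollary \ref{corclopen} enters: the sets $E_{w,j}$ are clopen in $Y$, the sets $E_{v,i}$ are clopen in $X$, and by construction $\phi^{-1}(E_{w,j}) = E_{v,j}$ (same $j$, by the alignment), so $\phi$ is continuous; together with $\phi$ being a bijection between compact Hausdorff spaces, it is a homeomorphism. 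Finally, (6) verify the two clauses of stability: for all $x$, $x$ is built from $v$ (Proposition \ref{propexp2}) and $\phi(x)$ is built from $w$ (by construction), and $x$ has an expected occurrence of $v$ at $k$ iff $\phi(x)$ has an expected occurrence of $w$ at $k$ — the latter is exactly how $\phi$ was defined, once one knows the $E_v$-collection is the unique one (Proposition \ref{propexp1}). Thus $(v,w)$ is a replacement scheme for the stable isomorphism $\phi$.

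For the converse, suppose $(v,w)$ is a replacement scheme for some stable isomorphism $\phi : X \to Y$. Pick any non-periodic $x \in X$ whose expected $v$-occurrences begin at exactly the same set of positions as the expected $v$-occurrences of $V$ — for instance, one can use the point $y$ built in the proof of Proposition \ref{prop1}(b)/(c), namely $y(i) = V(i)$ for $i \geq 0$ and $y(i) = 1$ for $i < 0$, which lies in $X$, is not periodic (Corollary \ref{cor1}), and has $V$ as a ``positive half''; its expected $v$-blocks on the nonnegative coordinates are exactly those of $V$. Since $\phi$ is stable with scheme $(v,w)$, clause (2) says $\phi(y)$ has an expected $w$-occurrence beginning at $k$ iff $y$ does, so the positions of expected $w$-blocks in $\phi(y)$ match the positions of expected $v$-blocks in $y$, which on the nonnegative coordinates match those of $V$. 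Since $\phi(y) \in Y$, its nonnegative half is (eventually, or after a harmless shift) built from $w$ with the same block pattern as $W$; one then reads off that $W$ and $\phi(y)$'s positive part have their expected $w$-occurrences at the same places that $V$ has its expected $v$-occurrences, which is precisely the statement that $(v,w)$ is a replacement scheme for $V$ and $W$. (One small point to nail down: that $\phi$ maps the ``$V$-half'' point to the ``$W$-half'' point up to a shift — this can be forced by composing $\phi$ with a power of $\sigma$, or argued from density of orbits via Corollary \ref{cor}(a) and continuity.)

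The main obstacle I anticipate is step (1)–(3) of the forward direction: making the definition of $\phi$ completely precise across the three structural cases and verifying $\sigma$-equivariance and the well-definedness of the coordinate bookkeeping — in particular confirming that the alignment conventions are consistent (an expected $v$ at $0$ goes to an expected $w$ at $0$) so that $\phi^{-1}(E_{w,j}) = E_{v,j}$ exactly, which is what makes both continuity and clause (2) of stability fall out cleanly. Everything else (that images land in $Y$, that $\phi$ is a bijection, the compactness argument for homeomorphism) is then routine given the earlier results, especially Propositions \ref{propexp1}, \ref{propexp2} and Corollary \ref{corclopen}.
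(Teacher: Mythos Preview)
Your forward direction is essentially the paper's approach, just spelled out in more detail than the paper's one-paragraph outline; nothing to flag there.

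The reverse direction, however, has a genuine gap. Your chosen witness point $y$ with $y(i) = V(i)$ for $i \geq 0$ and $y(i)=1$ for $i<0$ lies in $X$ only in the \emph{non-minimal} case (this is exactly what Proposition~\ref{prop1}(c) establishes). In the minimal case there is a uniform bound on the number of consecutive $1$'s in any element of $X$, so this $y$ is simply not available, and your argument does not go through. The fixes you propose in the parenthetical---composing $\phi$ with a power of $\sigma$, or invoking orbit density---do not address this: composing with $\sigma^i$ destroys the property that expected $v$'s and expected $w$'s start at the \emph{same} positions, and density alone does not produce a point whose nonnegative half is literally $V$.

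The paper handles both cases uniformly with a different idea. It first uses a compactness argument to produce an $x \in X$ having an expected occurrence of $v_m$ at position $0$ for \emph{every} $m$ (take $x_m \in E_{v_m,0}$ and pass to a limit point, using that each $E_{v_m,0}$ is closed by Corollary~\ref{corclopen}); this $x$ exists even in the minimal case and has nonnegative half equal to $V$. Then, letting $U$ be the nonnegative half of $\phi(x)$, the paper does \emph{not} try to argue directly that $U=W$. Instead it shows that $U$ is itself a rank-1 word, that $U$ contains every finite subword of $W$ so that $Y \subseteq Z$ where $Z$ is the system associated to $U$, and finally invokes Proposition~\ref{proponetoone} (distinct non-degenerate rank-1 words give distinct systems) to force $U = W$. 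This injectivity step is the missing ingredient in your sketch: without it, you have no mechanism for identifying the nonnegative half of $\phi(x)$ with $W$ in the minimal case.
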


\begin{proof}
We first outline the proof of the forward implication.  If  $(v,w)$ is a replacement scheme for $V$ and $W$, then we can obtain $W$ from $V$ by replacing every expected occurrence of $v$ by $w$ and then adding or deleting occurrences of 0 as necessary.  For example, if the length of $v$ minus the length of $w$ is $l \geq 0$, then $W$ can be obtained from $V$ by replacing every expected occurrence of $v$ by $w1^l$.  There is a unique function $\phi : X \rightarrow \{0,1\}^\Z$ so that for each $x \in X$, $\phi(x)$ can be obtained from $x$ in the same way that $W$ can be obtained from $V$ (in our example, by replacing every expected occurrence of $v$ by $w1^l$).  It is then straightforward to check that $\phi$ is a homeomorphism from $X$ to $Y$.  It is obvious from the definition of $\phi$ that $\phi$ commutes with $\sigma$ and that $(v, w)$ is a replacement scheme for $\phi$.

We now prove the reverse implication.  Suppose $(v, w)$ is a replacement scheme for an isomorphism $\phi : X \rightarrow Y$.  Let $(v_m : m \in \N)$ and $(w_n : n \in \N)$ be the canonical generating sequences for $V$ and $W$, respectively.

We claim that there is some $x \in X$ so that for all $m \in \N$, $x$ has an expected occurrence of $v_m$ beginning at 0.  For each $m \in \N$ choose some $x_m \in X$ that has an expected occurrence of $v_m$ beginning at 0.  Note that this implies that $x_m$ has an expected occurrence of $x_n$ beginning at 0, for all $n  \leq m$.  Since $X$ is compact, the sequence $(x_m : m \in \mathbb{N})$ has a convergent subsequence; say $x \in X$ is the limit of a convergent subsequence.  It follows from Corollary \ref{corclopen} that for all $m \in \N$, $x$ has an expected occurrence of $v_m$ beginning at 0.

Now consider $\phi (x)$, and let $U$ be the infinite word in $\phi(x)$ that begins at 0.  To show that $(v,w)$ is a replacement scheme for $V$ and $W$, it suffices to show that $U = W$.  

We first claim that $U$ is rank-1.  It is easy to see that for each $v_m$ such that $v \prec v_m$, there is a corresponding word $v^\prime_m$ from which $U$ is built.  For example, if the length of $v$ minus the length of $w$ is $l \geq 0$ and $v_m = v 1^{a_1} v 1^{a_2} \ldots 1^{a_{r-1}} v$, then $U$ is built from the word $$v^\prime_m = w 1^{a_1 +l} w 1^{a_2+ l} \ldots 1^{a_{r-1} + l} w.$$  Since there are infinitely many $v_m$ such that $v \prec v_m$, $U$ is rank-1.

We next claim that $U$ contains every finite subword of $W$.  Indeed, note that since $x$ has an occurrence of $V$ beginning at 0, $x$ has expected occurrences of $v$ beginning at arbitrarily large $k$.  Since $w$ contains an occurrence of 0, there are occurrences of 0 that occur in $\phi (x)$ at arbitrarily large $k$.  This implies that there are expected occurrences of each $w_n$ that occur in $\phi (x)$ at arbitrarily large $k$.  Thus $U$ contains an occurrence of each $w_n$ and thus contains every finite subword of $W$.

Let $(Z, \sigma)$ be the non-degenerate rank-1 system corresponding to $U$.  Since $U$ contains every finite subword of $W$, we know that $Y \subseteq Z$.  Since a rank-1 system is non-degenerate iff it is uncountable (see Proposition \ref{prop1}), and $Y$ is non-degenerate, it must be the case that $Z$ is non-degenerate.  Now from Proposition \ref{proponetoone} it follows that $W = U$.
\end{proof}

Our next major objective is to show that every isomorphism between non-degenerate rank-1 systems is nearly stable.

\begin{theorem}
\label{thmreplacement}
If $\phi : X \rightarrow Y$ is an isomorphism between non-degenerate rank-1 systems, then for some $q \in \Z$, the isomorphism $\phi \circ \sigma^q : X \rightarrow Y$ is stable.
\end{theorem}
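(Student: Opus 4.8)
The plan is to reduce the statement, via the definition of stability, to the following: find a finite word $v$ from which $V$ is built, a finite word $w$ from which $W$ is built, and an integer $q$, such that for every $x \in X$ and every $k \in \Z$, $x$ has an expected occurrence of $v$ beginning at $k$ if and only if $\phi(x)$ has an expected occurrence of $w$ beginning at $k+q$. Granting this, $\phi \circ \sigma^{q}$ is a stable isomorphism with replacement scheme $(v,w)$: every $x \in X$ is built from $v$ and every element of $Y$ is built from $w$ by Proposition \ref{propexp2}, and the displayed biconditional is equivalent to condition (2) of stability for $\phi\circ\sigma^q$ once one rewrites $(\phi\circ\sigma^q)(x)=\sigma^q(\phi(x))$. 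The first step is to record, by a standard compactness argument (the Curtis--Hedlund--Lyndon theorem), that $\phi$ and $\phi^{-1}$ are sliding block codes: there is $N\in\N$ with $\phi(x)(i)$ depending only on $x(i-N),\dots,x(i+N)$, and similarly for $\phi^{-1}$. Thus for any finite word $u$ with $|u|>2N$ there is a word $\phi_*(u)$ of length $|u|-2N$ so that an occurrence of $u$ beginning at $k$ in $x$ produces an occurrence of $\phi_*(u)$ beginning at $k+N$ in $\phi(x)$; write $(\phi^{-1})_*$ for the analogous operation for $\phi^{-1}$.

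Next I would choose $(v,w)$ by a back-and-forth using the canonical generating sequences $(v_m)$ and $(w_n)$. Begin with $v_0\in B_V$ of very large length. The word $\phi_*(v_0)$ cannot be essentially a run of $1$s for arbitrarily long $v_0$: if it were, then taking $x^\ast\in X$ with an expected occurrence of every $v_m$ beginning at $0$ (such a point exists by compactness, as in the proof of Proposition \ref{propreplacement}), one finds $\phi(x^\ast)$, and then, applying $\phi^{-1}$, $x^\ast$ itself eventually constant to the right, contradicting that $x^\ast$ restricted to $[0,\infty)$ equals $V$, which has infinitely many $0$s. Hence for $|v_0|$ large enough $\phi_*(v_0)$ contains an occurrence of some $w=w_n\in B_W$ that is expected (with respect to $W$), together with the whole length-$2t_w|w|$ window that, by Corollary \ref{corexp}, detects its expectedness. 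Running the same argument for $\phi^{-1}$ and $w$, and enlarging $|v_0|$ (hence $|w|$) if needed, $(\phi^{-1})_*(w)$ contains an expected occurrence of some $v=v_m\in B_V$ with $|v|>|v_0|$ — so $v_0\preceq v$, as the shorter of two comparable elements of the totally ordered set $B_V$ (Corollary \ref{corBtotalorder}) — again together with its detection window. Since $v_0$ is a prefix of $v$, the word $\phi_*(v_0)$ is a prefix of $\phi_*(v)$, so the expected occurrence of $w$ found above still lies inside $\phi_*(v)$; this makes $(v,w)$ a genuinely matched pair.

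With $(v,w)$ fixed, the correspondence follows. If $x$ has an expected occurrence of $v$ beginning at $k$, then $\phi(x)$ on $[k+N,k+|v|-N)$ is the fixed word $\phi_*(v)$; inside this known window, the expectedness criterion of Corollary \ref{corexp} — the same local rule used to define expected occurrences in elements of $Y$ in the proof of Proposition \ref{propexp2} — exhibits an expected occurrence of $w$ in $\phi(x)$ at a position $k+q$ depending only on $(v,w)$. Symmetrically, via $\phi^{-1}$, any $y\in Y$ with an expected occurrence of $w$ beginning at $j$ has an expected occurrence of $v$ beginning at $j+q'$, for a fixed $q'$. Composing, for every $x\in X$ an expected occurrence of $v$ at $k$ forces one at $k+(q+q')$. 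To finish I would rule out $q+q'\neq 0$ by a rigidity argument: a nonzero value would make the set of starting positions of expected occurrences of $v$ in any $x\in X$ that has one invariant under a nonzero translation, and translating this into a statement about the exponents appearing in the expansion of $V$ as built from $v$ forces those exponents to be periodic, hence $V$ periodic — impossible for $V\in\mathcal{R}$. With $q'=-q$, the two one-sided implications combine to the required biconditional, giving the theorem.

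The hard part is the correspondence of the third paragraph: converting the purely combinatorial sliding-block structure of $\phi$ into a matching of expected occurrences that is at once uniform in $x$, positionally exact, and sensitive to expectedness rather than merely to the presence of a subword. The key that unlocks this is Corollary \ref{corexp}: expectedness of an occurrence of a word is decided inside a bounded window, so once $v$ is taken long enough the block code literally ``sees'' which occurrences of $w$ are expected. The delicate work lies in the bookkeeping guaranteeing that every relevant detection window fits inside the region the block code controls — the region $[k+N,k+|v|-N)$ can be interrupted by long runs of $1$s, which is exactly why $(v,w)$ must be produced by the back-and-forth of the second paragraph — together with the final rigidity step eliminating a residual shift.
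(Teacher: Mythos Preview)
The back-and-forth of your second paragraph cannot work as written: if $w$ together with its detection window sits inside $\phi_*(v_0)$, then $|w| < |\phi_*(v_0)| = |v_0|-2N$; and if $v$ sits inside $(\phi^{-1})_*(w)$, then $|v| < |w|-2N < |v_0|-4N$. So necessarily $|v|<|v_0|$, contradicting your claim that $|v|>|v_0|$ (and hence $v_0\preceq v$). ``Enlarging $|v_0|$'' does not help, since the bound on $|v|$ shrinks with it. The paper's setup avoids this by going in the \emph{opposite} direction: rather than looking for a $w$ inside the sliding-block image of $v_0$, it uses the clopen sets $E_{v,i}$ and Proposition~\ref{propexpbasis} to find a \emph{longer} $w_2$ with $E_{w_2,j}\subseteq\phi(E_{v_1,0})$, and then a longer $v_3\succeq v_1$ with $E_{v_3,i}\subseteq\phi^{-1}(E_{w_2,0})$.

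Even granting a corrected back-and-forth, your pair $(v,w)$ cannot be a replacement scheme. Your forward implication locates \emph{one} expected $w$ inside the image of each expected $v$; but since $|w|$ and $|v|$ are very different, there will typically be many expected occurrences of the shorter word inside the image of each expected occurrence of the longer one, and the biconditional ``expected $v$ at $k$ iff expected $w$ at $k+q$'' fails outright. Your rigidity step does not rescue this: from ``expected $v$ at $k$ $\Rightarrow$ expected $v$ at $k+(q+q')$'' you only get forward closure of the position set under a shift, which by itself does not force periodicity (nor $q+q'=0$). The paper confronts exactly this mismatch: Lemma~\ref{lemma11} (and Lemma~\ref{lemmar+s} in the non-minimal case) only yields $0\le r+s<|v_3|$, and the real engine is Proposition~\ref{proprepcase1}, which shows that the \emph{entire pattern} of expected $w_2$'s associated to a single expected $v_3$ is the same for every $x$ and $k$. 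One then \emph{defines} $w$ to be the word spanning that whole pattern, so that expected $v_3$'s and expected $w$'s correspond one-to-one. In the non-minimal case an additional ingredient (Lemma~\ref{lemma1s}, controlling how $\phi$ handles arbitrarily long blocks $01^a0$) is needed to run the analogue of Proposition~\ref{proprepcase1}. Your outline is missing precisely this core step.
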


As no minimal rank-1 system can be isomorphic to a non-minimal rank-1 system, we can prove the theorem in two cases:  when both $(X, \sigma)$ and $(Y, \sigma)$ are minimal; and when both $(X, \sigma)$ and $(Y, \sigma)$ are non-minimal.  We will do this in Sections \ref{RSminimal} and \ref{RSnonminimal}, respectively.

\begin{corollary}
If $(X, \sigma)$ is a non-degenerate rank-1 system and $\tau$ is an autohomeomorphism of $X$ that commutes with $\sigma$, then for some $i \in \mathbb{Z}$, $\tau = \sigma ^i$.
\end{corollary}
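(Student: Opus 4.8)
The corollary asserts that the centralizer of $\sigma$ in the homeomorphism group of $X$ is exactly $\{\sigma^i : i \in \Z\}$. The plan is to derive this directly from Theorem \ref{thmreplacement}, which says that any isomorphism is stable after composing with a power of $\sigma$.

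So first I would take an autohomeomorphism $\tau$ of $X$ commuting with $\sigma$; this is precisely an isomorphism from $(X,\sigma)$ to itself. Applying Theorem \ref{thmreplacement}, there is some $q \in \Z$ so that $\psi := \tau \circ \sigma^q$ is a stable isomorphism from $X$ to $X$, witnessed by a replacement scheme $(v,w)$ with $v,w \in \mathcal{F}$ and both $V$ (the rank-1 word generating $X$) built from $v$ and built from $w$. Since $\psi$ fixes each expected occurrence position, we have that $x$ has an expected occurrence of $v$ at $k$ iff $\psi(x)$ has an expected occurrence of $w$ at $k$. The goal is then to show $\psi = \sigma^j$ for some $j$, since then $\tau = \sigma^{j-q}$.

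The key step is to pin down $v$ and $w$. Since $(v,w)$ is a replacement scheme for $V$ and $V$ (Proposition \ref{propreplacement} applied with $W = V$), both $v$ and $w$ lie in $A_V$, and $V$ has an expected occurrence of $v$ at $k$ iff it has an expected occurrence of $w$ at $k$. Picking a large $w' \in B_V$ above both $v$ and $w$ in the order $\preceq$, the expected occurrences of $v$ and of $w$ both refine the expected occurrences of $w'$, and comparing how they sit inside a single expected occurrence of $w'$ forces $v = w$: if $v \prec w$ then inside each expected occurrence of $w$ there are several expected occurrences of $v$, not just one, so the "begins at $k$" sets differ; symmetrically $w \not\prec v$; and by Proposition \ref{propsubset} (or directly, since they're both in $A_V$ and comparable after passing through $B_V$) $v$ and $w$ are comparable. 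Hence $v = w$. But a stable isomorphism with replacement scheme $(v,v)$ must send each $x$ to a $y$ with exactly the same expected occurrences of $v$ at exactly the same positions; since the expected occurrences of $v$ partition the 0's of $x$ and determine $x$ completely (the gaps between consecutive expected occurrences of $v$ are determined, and $x$ is the concatenation), and likewise for $y$, we get $y = x$, i.e. $\psi$ is the identity — wait, more carefully: $\psi(x)$ and $x$ have the same expected occurrences of $v$ at the same places, and since $v$ begins and ends with $0$ and $V$ is built from $v$, knowing the positions of all expected occurrences of $v$ in a non-periodic point determines that point, so $\psi(x) = x$ for all non-periodic $x$, hence $\psi = \mathrm{id}$ by density (Corollary \ref{cor}) and continuity. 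Then $\tau \circ \sigma^q = \mathrm{id}$, so $\tau = \sigma^{-q}$.

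The main obstacle is the argument that $v = w$ and then that a $(v,v)$-scheme forces $\psi$ to be the identity rather than some nontrivial shift — one must be careful that a replacement scheme only constrains $\psi$ up to things that preserve expected-occurrence positions, and check that this pins $\psi$ down to the identity (not merely to a shift). An alternative, cleaner route avoiding this subtlety: don't try to force $\psi = \mathrm{id}$; instead observe that $\tau$ being an autohomeomorphism commuting with $\sigma$ is the $X = Y$ case, and the whole of Theorem \ref{thmreplacement} plus the analysis of when a stable self-isomorphism exists (Proposition \ref{propreplacement} with $V = W$) shows that the only replacement schemes $(v,w)$ for $V$ and $V$ have $v = w$ and produce exactly the shift maps $\sigma^j$; so $\psi$, being stable, is some $\sigma^j$, whence $\tau = \sigma^{j-q}$. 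I would write the proof along these lines, keeping it to a few sentences since it is an immediate consequence of the theorem.
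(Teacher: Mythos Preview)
Your main argument is correct and is exactly the paper's approach: apply Theorem~\ref{thmreplacement} to get $\psi=\tau\circ\sigma^q$ stable with replacement scheme $(v,w)$, invoke Proposition~\ref{propreplacement} to see $(v,w)$ is a replacement scheme for $V$ and $V$, deduce $v=w$, and conclude $\psi=\mathrm{id}$ so $\tau=\sigma^{-q}$. The paper's proof is just these four lines; your detour through $B_V$ and comparability for the $v=w$ step is unnecessary (counting zeros in $v$ and $w$ via the common set of expected-occurrence starting positions gives it immediately), and your ``alternative route'' at the end is slightly off --- a stable self-isomorphism with scheme $(v,v)$ is exactly the identity, not an arbitrary $\sigma^j$.
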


\begin{proof}
The map $\tau$ is an isomorphism between $(X, \sigma)$ and $(X, \sigma)$.  By Theorem \ref{thmreplacement} there is some $q \in \mathbb{Z}$ so that $\tau \circ \sigma^q$ is a stable isomorphism between $(X, \sigma)$ and $(X, \sigma)$.  Let $(v,w)$ be a replacement scheme for $\tau \circ \sigma^q$.  By Proposition \ref{propreplacement} $(v,w)$ is a replacement scheme for $V$ and $V$.  This implies that $v = w$, which implies that $\tau \circ \sigma^q$ is the identity map.  Thus, $\tau = \sigma^{-q}$.
\end{proof}

Before proceeding with the proof of Theorem \ref{thmreplacement}, we state one more important corollary, which gives a nice characterization of when $V,W \in \mathcal{R}$ are isomorphic.  This characterization will be used in Section \ref{seccomplexity} to determine the complexity of the isomorphism relation on $\mathcal{R}$ and in Section \ref{secinverse} to characterize when a non-degenerate rank-1 system is isomorphic to its inverse.

\begin{corollary}
\label{correplacement}
Let $V, W \in \mathcal{R}$.  Then $V$ is isomorphic to $W$ (i.e., there exists an isomorphism $\phi : X \rightarrow Y$) iff there is a replacement scheme $(v,w)$ for $V$ and $W$.
\end{corollary}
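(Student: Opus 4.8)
The plan is to read this corollary off directly from Theorem~\ref{thmreplacement} and Proposition~\ref{propreplacement}; no genuinely new argument is needed, since all the substance has been packaged into those two statements. First I would dispose of the easy (``if'') direction: if $(v,w)$ is a replacement scheme for $V$ and $W$, then Proposition~\ref{propreplacement} hands us a stable isomorphism $\phi : X \to Y$ having $(v,w)$ as a replacement scheme, and in particular $\phi$ is an isomorphism between the rank-$1$ systems associated to $V$ and $W$, so those systems are isomorphic.

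For the converse, suppose $\phi : X \to Y$ is any isomorphism. I would apply Theorem~\ref{thmreplacement} to obtain $q \in \Z$ with $\psi := \phi \circ \sigma^{q}$ a stable isomorphism from $X$ to $Y$, then use the definition of ``stable'' to fix a replacement scheme $(v,w)$ for $\psi$. Since $\psi$ is a stable isomorphism $X \to Y$, Proposition~\ref{propreplacement} (the implication ``replacement scheme for a stable isomorphism $\Rightarrow$ replacement scheme for $V$ and $W$'') immediately gives that $(v,w)$ is a replacement scheme for $V$ and $W$, as required. The only point requiring a (trivial) check is that precomposing $\phi$ with $\sigma^{q}$ on the domain changes neither $X$ nor the codomain $Y$, so that Proposition~\ref{propreplacement} genuinely applies to $\psi$.

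Thus the proof itself is a two-line deduction, and there is no obstacle internal to the corollary. The real work is entirely upstream: in Theorem~\ref{thmreplacement} (established later by splitting into the minimal and non-minimal cases) and in the forward implication of Proposition~\ref{propreplacement}, whose proof required producing a point $x\in X$ lying in arbitrarily high levels of the associated tower and then invoking Proposition~\ref{proponetoone} to identify the image word with $W$.
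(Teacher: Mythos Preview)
Your proposal is correct and matches the paper's proof essentially line for line: both directions are deduced immediately from Proposition~\ref{propreplacement} and Theorem~\ref{thmreplacement}, with the converse going through the stable isomorphism $\phi\circ\sigma^q$ and then invoking Proposition~\ref{propreplacement} to pass from a replacement scheme for that map to one for $V$ and $W$.
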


\begin{proof}
If $(v,w)$ is a replacement scheme for $V$ and $W$, then by Proposition \ref{propreplacement}, there is an isomorphism $\phi : X \rightarrow Y$.  On the other hand, if $\phi : X \rightarrow Y$ is an isomorphism, then by Theorem \ref{thmreplacement} there is some $q \in \Z$ so that $\phi \circ \sigma^q$ is stable.  Since the isomorphism $\phi \circ \sigma^q : X \rightarrow Y$ is stable, there is some $(v,w)$ that is a replacement scheme for $\phi \circ \sigma^q$.  By Proposition \ref{propreplacement}, this $(v,w)$ is also a replacement scheme for $V$ and $W$.
\end{proof}

\subsection{Proof of Theorem \ref{thmreplacement} in the minimal case}
\label{RSminimal}

\subsubsection{The setup}
\label{seccase1setup}
Let $(X, \sigma)$ and $(Y, \sigma)$ be minimal non-degenerate rank-1 systems and let $\phi : X \rightarrow Y$ be an isomorphism.  Let $V$ and $W$ be the rank-1 words that give rise to $(X, \sigma)$ and $(Y, \sigma)$, respectively.  Let $X_{\max}$ be the largest number of consecutive 1s that occurs in $V$ (this is the same as the largest number of 1s that occurs in an element of $X$).  Let $Y_{\max}$ be the largest number of consecutive 1s that occurs in $W$.

We will use basic techniques to choose  $v_1, w_2, v_3 \in \mathcal{F}$ and $r, s \in \Z$ satisfying all of the following.
\begin{enumerate}
\item  $v_1, v_3 \in A_V$ and $w_2 \in A_W$.
\item  $v_1 \preceq v_3$.
\item  $|v_1| \geq X_{\max}$ and $|v_1| \geq Y_{\max}$.
\item  For all $x \in X$ and all $k \in \Z$:  
\begin{enumerate}
\item  If $x$ has an expected occurrence of $v_3$ beginning at $k$, then $\phi (x)$ has an expected occurrence of $w_2$ beginning at $k + r$.
\item  If $\phi (x)$ has an expected occurrence of $w_2$ beginning at $k$, then $x$ has an expected occurrence of $v_1$ beginning at $k+s$.
\end{enumerate}
\end{enumerate}

Choose $v_1 \in A_V$ so that $|v_1| > X_{\max}$ and $|v_1| > Y_{\max}$.  Consider the non-empty open set $\phi(E_{v_1, 0}) \subseteq Y$.  By Proposition \ref{propexpbasis}, there is some $w_2 \in A_W$ and some $j \in \Z$ so that $E_{w_2, j} \subseteq \phi(E_{v_1, 0})$.  Note that if $\phi (x)$ has an occurrence of $w_2$ beginning at $j$, then $x$ has an expected occurrence of $v_1$ beginning at 0.  Since $\phi$ commutes with $\sigma$, if any $\phi (x) \in Y$ has an occurrence of $w_2$ beginning at any $k \in \Z$, then $x$ has an expected occurrence of $v_1$ beginning at $k - j$.  Let $s = -j$ and note that if any $\phi (x) \in Y$ has an occurrence of $w_2$ beginning at any $k \in \Z$, then $x$ has an expected occurrence of $v_1$ beginning at $k + s$. 

Now consider the non-empty open set $\phi^{-1}(E_{w_2, 0}) \subseteq X$.  By Proposition \ref{propexpbasis}, there is some $v^\prime_3 \in A_V$ and some $i \in \Z$ so that $E_{v^\prime_3, i} \subseteq \phi(E_{w_2, 0})$.  Now let $v_3 = v^\prime_3 \vee v_1$.  Note that $v_1 \preceq v_3$.  Also, since $v^\prime_3 \preceq v_3$, it follows from Corollary \ref{corexppreceq} that $E_{v_3, i} \subseteq E_{v^\prime_3, i}$.  Thus, $E_{v_3, i} \subseteq \phi(E_{w_2, 0})$.

If $x$ has an occurrence of $v_3$ beginning at $i$, then $\phi(x)$ has an expected occurrence of $w_2$ beginning at 0.  Since $\phi$ commutes with $\sigma$, if any $x \in X$ has an occurrence of $v_3$ beginning at any $k \in \Z$, then $\phi(x)$ has an expected occurrence of $w_2$ beginning at $k - i$.  Let $r = -i$ and note that if any $x \in X$ has an occurrence of $v_3$ beginning at any $k \in \Z$, then $\phi(x)$ has an expected occurrence of $w_2$ beginning at $k + r$. 

The following diagram illustrates the information we have so far. The arrows indicate forcing of occurrences of words.

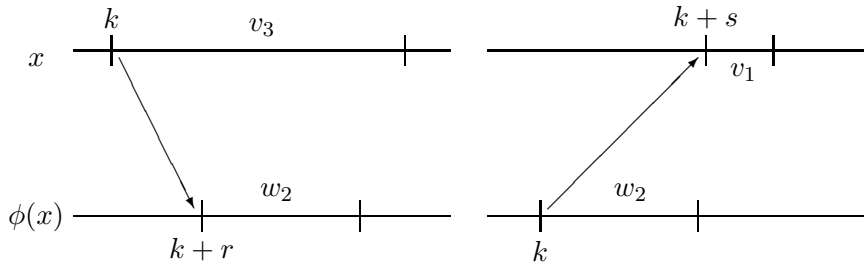
\begin{figure}[h]
\begin{center}
\setlength{\unitlength}{.1cm}
\begin{picture}(120,35)(0,0)
\put(5,6){\makebox(0,0)[b]{$\phi(x)$}}
\put(5,28){\makebox(0,0)[b]{$x$}}
\put(10,8){\line(1,0){50}}
\put(65,8){\line(1,0){50}}
\put(10,30){\line(1,0){50}}
\put(65,30){\line(1,0){50}}

\put(15,28){\line(0,1){4}}
\put(15,33){\makebox(0,0)[b]{$k$}}
\put(16,29){\vector(1,-2){10}}
\put(27,6){\line(0,1){4}}
\put(27,2){\makebox(0,0)[b]{$k+r$}}
\put(54,28){\line(0,1){4}}
\put(35,32){\makebox(0,0)[b]{$v_3$}}
\put(37,10){\makebox(0,0)[b]{$w_2$}}
\put(48,6){\line(0,1){4}}

\put(72,6){\line(0,1){4}}
\put(73,9){\vector(1,1){20}}
\put(93,6){\line(0,1){4}}
\put(94,28){\line(0,1){4}}
\put(103,28){\line(0,1){4}}
\put(94,33){\makebox(0,0)[b]{$k+s$}}
\put(72,2){\makebox(0,0)[b]{$k$}}
\put(99,26){\makebox(0,0)[b]{$v_1$}}
\put(84,10){\makebox(0,0)[b]{$w_2$}}

\end{picture}
\caption{\label{Case1setupfig} Forced occurrences of words $w_2$ and $v_1$.}
\end{center}
\end{figure}

\subsubsection{What we need}  To prove the theorem we need to find finite words $v$ and $w$ and an integer $q$ so that for all $x \in X$ and all $k \in \Z$:
\begin {enumerate}
\item $x$ is built from $v$ and $\phi (x)$ is built from $w$; and
\item $x$ has an expected occurrence of $v$ beginning at $k$ iff $\phi (x)$ has an expected occurrence of $w$ beginning at $k + q$.
\end{enumerate}

The desired $v$ will be the word $ v_3$.  Since $v \in A_V$, Proposition \ref{propexp2} implies that each $x \in X$ is built from $v$.

\subsubsection{First steps} 
Consider any $\phi (x)  \in Y$.  It consists of expected occurrences of $w_2$ interspersed with 1s.  Each expected occurrence of $w_2$ in $\phi (x)$ forces an expected occurrence of $v_1$ in $x$ and, by Corollary \ref{corexppreceq}, that expected occurrence of $v_1$ in $x$ is contained in an expected occurrence of $v_3$.  In this way we associate, to each expected occurrence of $w_2$ in $\phi (x)$, an expected occurrence of $v_3$ in $x$.  Notice that if an expected occurrence of $w_2$ in $\phi (x)$ beginning at $j$ is associated to an expected occurrence of $v_3$ in $x$ beginning at $k$, then $k \leq j + s < k + |v_3|$ or, equivalently, $0 \leq (j-k) + s < |v_3|$.  

We can fix an expected occurrence of $v_3$ in any $x \in X$ (say it begins at $k$) and consider the collection of expected occurrences of $w_2$ in $\phi (x)$ associated to it.  In Proposition \ref{proprepcase1}, we show that the structure of that collection is, in some sense, independent of $x$ and $k$.

\begin{lemma}
\label{lemma11}
$0 \leq r+s < |v_3|$.
\end{lemma}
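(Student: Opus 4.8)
I want to show $0 \le r+s < |v_3|$. The idea is to chase a single expected occurrence of $v_3$ through the two forcing implications and back, and use the fact that an expected occurrence of $v_1$ inside an expected occurrence of $v_3$ constrains the relative position. Concretely, take any $x \in X$ with an expected occurrence of $v_3$ beginning at some position $k$. By property (4a) of the setup, $\phi(x)$ has an expected occurrence of $w_2$ beginning at $k+r$. Then by property (4b), applied with $k+r$ in place of $k$, $x$ has an expected occurrence of $v_1$ beginning at $(k+r)+s = k+r+s$.

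So $x$ has an expected occurrence of $v_1$ beginning at $k+r+s$ and an expected occurrence of $v_3$ beginning at $k$. Since $v_1 \preceq v_3$ (property (2)), Corollary~\ref{corexppreceq} tells me that every expected occurrence of $v_1$ in $x$ is completely contained inside an expected occurrence of $v_3$ in $x$. Now I need to argue that the expected occurrence of $v_1$ beginning at $k+r+s$ must be contained in the \emph{specific} expected occurrence of $v_3$ beginning at $k$. Here I use property (3): $|v_1| \ge X_{\max}$, and moreover $v_1$ begins and ends with $0$, so an occurrence of $v_1$ contains more consecutive... wait — rather, $v_1$ contains two $0$s with fewer than $|v_1| \le $ (gap) — the cleanest route: the expected occurrences of $v_3$ in $x$ are disjoint and consecutive ones are separated by blocks $1^{a}$ with $a \le X_{\max} < |v_1|$. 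An occurrence of $v_1$ beginning at position $j$ with $k \le j$ and $j$ not inside the $v_3$-occurrence at $k$ would force $j \ge k + |v_3|$; and since consecutive expected $v_3$'s in $x$ start within $|v_3| + X_{\max}$ of each other, the next expected $v_3$ after the one at $k$ starts at some $k' \ge k + |v_3|$, and the expected $v_1$ at $k+r+s$ lies in the $v_3$ at $k$ iff $k \le k+r+s$ and $k+r+s + |v_1| \le k + |v_3|$, i.e. $0 \le r+s$ and $r+s \le |v_3| - |v_1| < |v_3|$. So the key is precisely to pin down that the expected $v_1$ at $k+r+s$ lands in the expected $v_3$ at $k$ and not a neighbor.

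\textbf{Pinning down the containment.} To force it into the right $v_3$-occurrence, I'll use that both forcing maps came from $\phi$ commuting with $\sigma$ in a coherent way, but more simply: the expected occurrence of $w_2$ at $k+r$ in $\phi(x)$ was obtained from the expected $v_3$ at $k$ in $x$ via the relation $E_{v_3,i}\subseteq\phi^{-1}(E_{w_2,0})$ shifted; and the expected $v_1$ at $(k+r)+s$ was obtained from that same $w_2$-occurrence via $E_{w_2,j}\subseteq\phi(E_{v_1,0})$. Tracing the original (unshifted) instances: the $w_2$-occurrence beginning at $0$ in $\phi(x')$ forces a $v_1$-occurrence at $s=-j$ in $x'$, and that $w_2$ at $0$ was itself forced by a $v_3$ at $i = -r$ in $x'$. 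So in $x'$ I have an expected $v_3$ at $-r$ and an expected $v_1$ at $s$. Since $|v_1| > X_{\max}$ and $v_3 = v_3' \vee v_1$ is built from $v_1$ simply-or-not, the expected $v_1$'s inside the expected $v_3$ at $-r$ occupy positions $-r, -r + (|v_1| + a_1), \dots$ Actually the cleanest: just observe $0 \le (j - k) + s < |v_3|$ was already noted in the paragraph before the lemma for the pair (expected $w_2$ at $j$, associated expected $v_3$ at $k$); applying this with the $w_2$-occurrence at $j = k+r$ associated to the $v_3$-occurrence at $k$ gives $0 \le r + s < |v_3|$ directly.

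\textbf{Main obstacle.} The real content is verifying that the expected $v_3$-occurrence \emph{associated} (in the sense of the paragraph preceding the lemma) to the expected $w_2$-occurrence at $k+r$ is exactly the one at $k$ — i.e. that the two separately-chosen forcing relations are mutually consistent on a common witness. This follows because we can pick $x$ with an expected occurrence of $v_3$ at $0$ (such $x$ exists by compactness, as $A_V$-occurrences can be taken cofinal), run $\phi$, land on an expected $w_2$ at $r$, whose associated expected $v_3$ must be the one at $0$ since $|v_1| > X_{\max}$ forces the expected $v_1$ at $r+s$ to sit inside it and Corollary~\ref{corexppreceq} gives uniqueness of the containing $v_3$. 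Once that identification is made, the inequality $0 \le r+s < |v_3|$ is immediate from the displayed inequality $0 \le (j-k)+s < |v_3|$ with $j = k+r$, $k = 0$. I don't expect any genuine difficulty beyond bookkeeping.
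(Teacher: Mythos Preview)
Your argument has a genuine gap at the step you yourself flag as the ``main obstacle.'' You correctly observe that the entire lemma reduces to showing that the expected occurrence of $v_1$ at position $k+r+s$ lies inside the expected occurrence of $v_3$ at position $k$ (rather than inside some neighboring expected occurrence of $v_3$). But your justification for this --- that ``$|v_1| > X_{\max}$ forces the expected $v_1$ at $r+s$ to sit inside it'' --- does not work. The inequality $|v_1| > X_{\max}$ guarantees only that an expected occurrence of $v_1$ cannot lie in a gap of $1$'s between two expected occurrences of $v_3$; hence it lies inside \emph{some} expected occurrence of $v_3$. It gives no information about \emph{which} one. There is nothing so far preventing $r+s$ from being, say, larger than $|v_3| + X_{\max}$, in which case the $v_1$ at $k+r+s$ would sit inside the \emph{next} expected $v_3$, not the one at $k$. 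Your appeal to the displayed inequality $0 \le (j-k)+s < |v_3|$ from the paragraph before the lemma is circular: that inequality holds for the pair consisting of the $w_2$-occurrence at $j$ and the $v_3$-occurrence \emph{associated to it} (i.e., the one containing the $v_1$ at $j+s$), and you are precisely trying to show that for $j=k+r$ this associated $v_3$ is the one at $k$.

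The paper's proof takes a completely different route: it argues by contradiction. Assuming $r+s \notin [0,|v_3|)$, the expected $v_1$ at $k+r+s$ lies in an expected $v_3$ at some $k' \neq k$. One then proves a claim: the number of $1$'s following the expected $v_3$ at $k$ equals the number following the expected $v_3$ at $k'$ (comparing two forced expected $v_1$'s and using $|v_1|>X_{\max}$ to rule out a nonzero offset). Iterating this claim in both directions forces $x$ to be periodic with period dividing $|k-k'|$, contradicting Corollary~\ref{cor1}. The essential idea you are missing is this periodicity argument; a direct pinning-down of the containing $v_3$ does not seem to be available from the setup alone.
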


\begin{proof}
We first prove a claim.

{\bf  Claim:}  Suppose $x$ has an expected occurrence of $v_3$ beginning at $k$, which is followed by $1^a$ and then by another expected occurrence of $v_3$.  Suppose also that the expected occurrence of $v_3$ that contains the expected occurrence of $v_1$ beginning at $k + r +s$ is followed by $1^b$ and then by another expected occurrence of $v_3$.  Then $a = b$.

Here is the proof. We know that $x$ has an expected occurrence of $v_1$ beginning at $k + r + s$.  Thus, there is also an expected occurrence of $v_1$ beginning at $k + r + s + |v_3| + b$.  But we also know that there is an expected occurrence of $v_1$ beginning at $k + |v_3| + a + r + s$.  If $a \neq b$, then these two expected occurrences of $v_1$ are distinct, and hence disjoint.  This clearly implies that $|a-b| \geq |v_1|$, which cannot happen because $a$ and $b$ are non-negative and $|v_1| > X_{\max}$.  This proves the claim.

The next claim can be proved with a similar argument.  We state it without proof.

{\bf  Claim:}
Suppose $x$ has an expected occurrence of $v_3$ beginning at $k$, which is preceded by $1^a$ and then by another expected occurrence of $v_3$.  Suppose also that the expected occurrence of $v_3$ that contains the expected occurrence of $v_1$ beginning at $k + r +s$ is preceded by $1^b$ and then by another expected occurrence of $v_3$.  Then $a = b$.

Now suppose $0 > r + s$ or $r+s \geq |v_3|$.  Choose $x \in X$ and $k \in \Z$ so that $x$ has an expected occurrence of $v_3$ beginning at $k$.  Then $x$ has an expected occurrence of $v_1$ beginning at $k + r + s$.  This expected occurrence of $v_1$ is contained in an expected occurrence of $v_3$ in $x$; say it begins at $k^\prime$.  Note that $k \neq k^\prime$.  Applying the claims repeatedly (infinitely many times) shows that $x$ is periodic, with period dividing $|k - k^\prime|$.  This contradicts Corollary \ref{cor1}.
\end{proof}

The following figure illustrates the information we have so far. 

\begin{figure}[h]
\begin{center}
\setlength{\unitlength}{.1cm}
\begin{picture}(100,40)(0,0)
\put(10,8){\line(1,0){80}}
\put(10,30){\line(1,0){80}}
\put(5,6){\makebox(0,0)[b]{$\phi(x)$}}
\put(5,28){\makebox(0,0)[b]{$x$}}
\put(20,34){\makebox(0,0)[b]{$k$}}
\put(32,3){\makebox(0,0)[b]{$k+r$}}
\put(44,35){\makebox(0,0)[b]{$v_3$}}
\put(41,10){\makebox(0,0)[b]{$w_2$}}
\put(60,25){\makebox(0,0)[b]{$k+r+s$}}
\put(61,32){\makebox(0,0)[b]{$v_1$}}

\put(21,29){\vector(1,-2){10}}
\put(33,9){\vector(1,1){20}}

\put(20,27){\line(0,1){6}}
\put(85,27){\line(0,1){6}}
\put(32,6){\line(0,1){4}}
\put(48,6){\line(0,1){4}}
\put(54,29){\line(0,1){2}}
\put(68,29){\line(0,1){2}}

\end{picture}
\caption{\label{lemma11fig} Forced occurrences of words $w_2$ and $v_1$ in the minimal case.}
\end{center}
\end{figure}
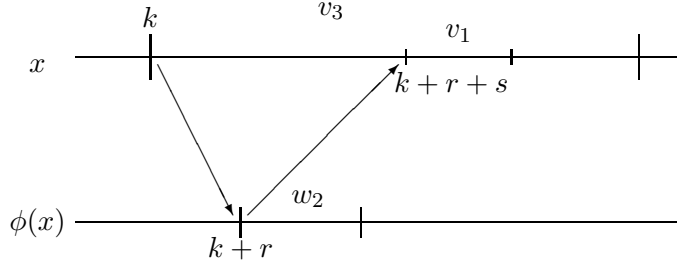

\subsubsection{The key proposition}

\begin{proposition}
\label{proprepcase1}
Suppose $r^\prime$ satisfies $0 \leq r^\prime + s < |v_3|$ and is such that for some $x \in X$ and $k \in \Z$, $x$ has an expected occurrence of $v_3$ beginning at $k$ and $\phi (x)$ has an expected occurrence of $w_2$ beginning at $k + r^\prime$.  Then for every $x \in X$ and $k \in \Z$, if $x$ has an expected occurrence of $v_3$ beginning at $k$, then $\phi (x)$ has an expected occurrence of $w_2$ beginning at $k + r^\prime$.
\end{proposition}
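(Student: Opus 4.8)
The plan is to first normalize the situation using $\sigma$-equivariance, then use minimality to reduce the statement to a purely local ``propagation'' fact about a single point, and finally prove that local fact by a chaining argument of the kind used in the proof of Lemma~\ref{lemma11}. For Step~1, let $x_0,k_0$ witness the hypothesis; replacing $x_0$ by $\sigma^{k_0}x_0$ we may assume $k_0=0$, so $x_0$ has an expected occurrence of $v_3$ beginning at $0$ and $\phi(x_0)$ has an expected occurrence of $w_2$ beginning at $r'$. By Corollary~\ref{corclopen}, $E_{v_3,0}\subseteq X$ and $E_{w_2,r'}\subseteq Y$ are clopen, so $D:=E_{v_3,0}\setminus\phi^{-1}(E_{w_2,r'})$ is clopen; since applying $\sigma^k$ translates everything, it suffices to show $D=\emptyset$, i.e.\ $E_{v_3,0}\subseteq\phi^{-1}(E_{w_2,r'})$.

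For Step~2, suppose toward a contradiction that $D\neq\emptyset$. Since $(X,\sigma)$ is minimal, the orbit of $x_0$ is dense, so $\sigma^n x_0\in D$ for some $n$; thus $x_0$ has expected occurrences of $v_3$ beginning at $0$ and at $n$, while $\phi(x_0)$ has an expected occurrence of $w_2$ beginning at $r'$ but not at $n+r'$. Scanning the finitely many expected occurrences of $v_3$ in $x_0$ between positions $0$ and $n$, the truth value of ``$\phi(x_0)$ has an expected occurrence of $w_2$ at (this $v_3$-position)$\,+\,r'$'' must change between two \emph{consecutive} expected occurrences of $v_3$, say beginning at $k$ and $k'=k+|v_3|+a$. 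So it is enough to prove the following propagation statement: for every $x\in X$ with consecutive expected occurrences of $v_3$ beginning at $k$ and $k'$, $\phi(x)$ has an expected occurrence of $w_2$ at $k+r'$ if and only if it has one at $k'+r'$.

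For Step~3, recall from Lemma~\ref{lemma11} that $0\le r+s<|v_3|$, and the same was assumed of $r'$, so $|r-r'|<|v_3|$; hence $k+r'$ lies strictly between the two forced expected occurrences of $w_2$ (from setup item~4(a)) at $k+r$ and $k'+r$ in $\phi(x)$ (or between $k''+r$ and $k+r$ when $r'<r$, where $k''$ is the expected $v_3$ preceding $k$), and likewise for $k'+r'$. By setup item~4(b), an expected occurrence of $w_2$ at $k+r'$ forces an expected occurrence of $v_1$ at $k+r'+s$, which by $v_1\preceq v_3$ and Corollary~\ref{corexppreceq} sits inside the expected $v_3$ at $k$ at the fixed offset $\delta':=r'+s\in[0,|v_3|)$; so $\delta'$ is one of the finitely many offsets at which the word $v_3$ contains an expected $v_1$, and consequently $x$ also has an expected occurrence of $v_1$ at $k'+\delta'$ regardless. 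The crux is to show that the offset pattern of expected occurrences of $w_2$ relative to two consecutive expected $v_3$'s of $x$ cannot differ: assuming it does, one compares the finite block of $\phi(x)$ between two consecutive forced expected $w_2$'s with the next such block, uses that each block is built from $w_2$, that their lengths are governed by the $v_3$-gaps of $x$, and that $|v_1|>X_{\max}$ and $|v_1|>Y_{\max}$, and then iterates the resulting matching of gaps (exactly as in the claims proved inside Lemma~\ref{lemma11}) to conclude that $x$ is periodic, contradicting Corollary~\ref{cor1}. Having ruled out the change of truth value, Step~2 is contradicted, so $D=\emptyset$.

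\textbf{Main obstacle.} The difficulty is concentrated in the last part of Step~3 — the rigidity of the offset pattern. Unlike the analogous claims in Lemma~\ref{lemma11}, this does not reduce to a one-line disjointness count, because consecutive ``$v_3$-period images'' in $\phi(x)$ can have different lengths; one must track carefully how the forward forcing (from $v_3$ to $w_2$) and the backward forcing (from $w_2$ to $v_1$) interact inside these blocks, and it is here that the analysis of expected occurrences from Section~\ref{expected}, together with the uniform locality provided by Corollary~\ref{corexp}, is essential.
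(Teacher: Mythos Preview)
Your Steps 1--2 are fine and give a different organization from the paper's: the paper never invokes minimality in this proof. Instead it makes a \emph{global} extremal choice of offsets. Assuming some $r'$ with $r<r'$ satisfies the hypothesis but not the conclusion, the paper takes $r_0$ to be the largest offset below $r'$ that is \emph{universal} (satisfies both hypothesis and conclusion) and $r_1$ the smallest offset above $r_0$ that satisfies the hypothesis but not the conclusion. It then picks two \emph{different} points: $x_0$, where $\phi(x_0)$ has expected $w_2$'s at both $k_0+r_0$ and $k_0+r_1$, and $x_1$, where $\phi(x_1)$ has one at $k_1+r_0$ but not at $k_1+r_1$. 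Extremality of $r_0,r_1$ forces the two $w_2$'s in $\phi(x_0)$ to be \emph{consecutive}, so the gap there is $a=r_1-r_0-|w_2|$; letting $b$ be the gap in $\phi(x_1)$ after the $w_2$ at $k_1+r_0$, the expected $v_1$ at offset $r_1+s$ inside the $v_3$-block (present in both points, since $v_3$ is the same word) and the expected $v_1$ forced at $k_1+r_0+|w_2|+b+s$ are distinct, hence disjoint, giving $|b-a|\ge|v_1|>Y_{\max}$. That is the entire argument --- a one-shot disjointness count, no iteration, no periodicity.

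The gap in your proposal is Step 3. The mechanism you suggest --- iterating a ``matching of gaps exactly as in the claims proved inside Lemma~\ref{lemma11}'' to conclude that $x$ is periodic --- is not what works here: those claims concern the gaps \emph{between} $v_3$-blocks of $x$, whereas your propagation is about the internal $w_2$-pattern of $\phi(x)$, and no periodicity of $x$ emerges from a discrepancy there. What is actually missing is the extremal-offset idea. Your route \emph{can} be completed, but only by running the paper's argument locally: with consecutive $v_3$-blocks at $k,k'$ in your single point $x_0$ and offset-sets $S_k\neq S_{k'}$, take $r_1$ minimal in $S_k\triangle S_{k'}$ above $r$ (say $r_1\in S_k\setminus S_{k'}$) and $r_0$ its predecessor in $S_k$; one checks $r_0\in S_k\cap S_{k'}$, that nothing in either set lies strictly between $r_0$ and $r_1$, and then the same $|b-a|\ge|v_1|>Y_{\max}$ contradiction applies with the two blocks of $x_0$ playing the roles of the paper's $x_0,x_1$. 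So your strategy is salvageable, but the key step you correctly flagged as the obstacle --- the extremal offset choice and the direct $Y_{\max}$ contradiction --- is precisely what the paper supplies and your sketch does not.
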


\begin{proof}
Suppose, towards a contradiction, that there is some $r^\prime$ that satisfies the hypothesis of the proposition, but not the conclusion of the proposition.  Either $r > r^\prime$ or $r< r^\prime$.  The arguments that leads to a contradiction in these two cases are almost identical, and we give only one of them.  Suppose $r < r^\prime$.

Let $r_0$ be the largest natural number less than $r^\prime$ that satisfies both the hypothesis and the conclusion of the proposition.  Note that $r_0 \geq r$.  Let $r_1$ be the smallest natural number greater than $r_0$ that satisfies the hypothesis of the proposition but not the conclusion of the proposition.  Note that $r_0 \leq r^\prime$.

Choose $ x_0, k_0, x_1, k_1$ so that all of the following hold.
\begin{enumerate}
\item  $x_0$ has an expected occurrence of $v_3$ beginning at $k_0$.
\item  $\phi (x_0)$ has expected occurrences of $w_2$ beginning at $k_0 + r_0$ and $k_0 + r_1$.
\item  $x_1$ has an expected occurrence of $v_3$ beginning at $k_1$.
\item  $\phi (x_1)$ has an expected occurrence of $w_2$ beginning at $k_1 + r_0$, but not at $k_1 + r_1$.
\end{enumerate}

Let $a$ be such that the expected occurrence of $w_2$ in $\phi (x_0)$ beginning at $k_0+r_0$ is followed by $1^a$ and then by another expected occurrence of $w_2$.

Let $b$ be such that the expected occurrence of $w_2$ in $\phi (x_1)$ beginning at $k_1$ is followed by $1^b$ and then by another expected occurrence of $w_2$.

We claim that $a = r_1 - r_0 - |w_2|$, i.e., that the first expected occurrence of $w_2$ in $\phi (x_0)$ after the expected occurrence of $w_2$ beginning at $k_0 + r_0$ is the one beginning at $k_0 + r_1$.  If not, then there is an expected occurrence of $w_2$ between those beginning at $k_0 + r_0$ and $k_0 + r_1$; say it begins at $k + \hat{r}$, with $r_0 < \hat{r} < r_1$.  If whenever $x \in X$ has an occurrence of $v_3$ at $k$, $\phi (x)$ has an expected occurrence of $w_2$ beginning at $k + \hat{r}$, then we have contradiction with the maximality of $r_0$.  If not, then we have a contradiction with the minimality of $r_1$.  Therefore, $a = r_1 - r_0 - |w_2|$. 

Note that $b \neq a$, for otherwise $\phi(x_1)$ would have an expected occurrence of $w_2$ beginning at $k_1 + r_1$. The following figure illustrates the situation.

\begin{figure}[h]
\begin{center}
\setlength{\unitlength}{.1cm}
\begin{picture}(120,80)(0,0)

\put(0,45){
\put(10,8){\line(1,0){110}}
\put(10,30){\line(1,0){110}}
\put(4,6){\makebox(0,0)[b]{$\phi(x_0)$}}
\put(4,28){\makebox(0,0)[b]{$x_0$}}
\put(20,34){\makebox(0,0)[b]{$k_0$}}
\put(32,3){\makebox(0,0)[b]{$k_0+r_0$}}
\put(55,3){\makebox(0,0)[b]{$k_0+r_1$}}
\put(64,35){\makebox(0,0)[b]{$v_3$}}
\put(41,10){\makebox(0,0)[b]{$w_2$}}
\put(90,25){\makebox(0,0)[b]{$k_0+r_1+s$}}
\put(91,32){\makebox(0,0)[b]{$v_1$}}
\put(21,29){\vector(1,-2){10}}
\put(55,9){\vector(4,3){27}}
\put(20,27){\line(0,1){6}}
\put(115,27){\line(0,1){6}}
\put(32,6){\line(0,1){4}}
\put(48,6){\line(0,1){4}}
\put(83,29){\line(0,1){2}}
\put(97,29){\line(0,1){2}}
\put(52,10){\makebox(0,0)[b]{$1^a$}}
\put(64,10){\makebox(0,0)[b]{$w_2$}}
\put(54,6){\line(0,1){4}}
\put(70,6){\line(0,1){4}}
}

\multiput(83,69)(0,-2){19}{\line(0,-1){1}}

\put(0,0){
\put(10,8){\line(1,0){110}}
\put(10,30){\line(1,0){110}}
\put(4,6){\makebox(0,0)[b]{$\phi(x_1)$}}
\put(4,28){\makebox(0,0)[b]{$x_1$}}
\put(20,34){\makebox(0,0)[b]{$k_1$}}
\put(32,3){\makebox(0,0)[b]{$k_1+r_0$}}
\put(64,35){\makebox(0,0)[b]{$v_3$}}
\put(41,10){\makebox(0,0)[b]{$w_2$}}
\put(58,10){\makebox(0,0)[b]{$1^b$}}
\put(76,10){\makebox(0,0)[b]{$w_2$}}
\put(76,25){\makebox(0,0)[b]{$k_1+r_1+s$}}
\put(102,32){\makebox(0,0)[b]{$v_1$}}
\put(21,29){\vector(1,-2){10}}
\put(67,9){\vector(4,3){27}}
\put(20,27){\line(0,1){6}}
\put(115,27){\line(0,1){6}}
\put(32,6){\line(0,1){4}}
\put(48,6){\line(0,1){4}}
\put(66,6){\line(0,1){4}}
\put(82,6){\line(0,1){4}}
\put(95,29){\line(0,1){2}}
\put(109,29){\line(0,1){2}}
\put(83,29){\line(0,1){2}}
}

\end{picture}
\caption{\label{proprepcase1fig} The proof of Proposition~\ref{proprepcase1}.}
\end{center}
\end{figure}
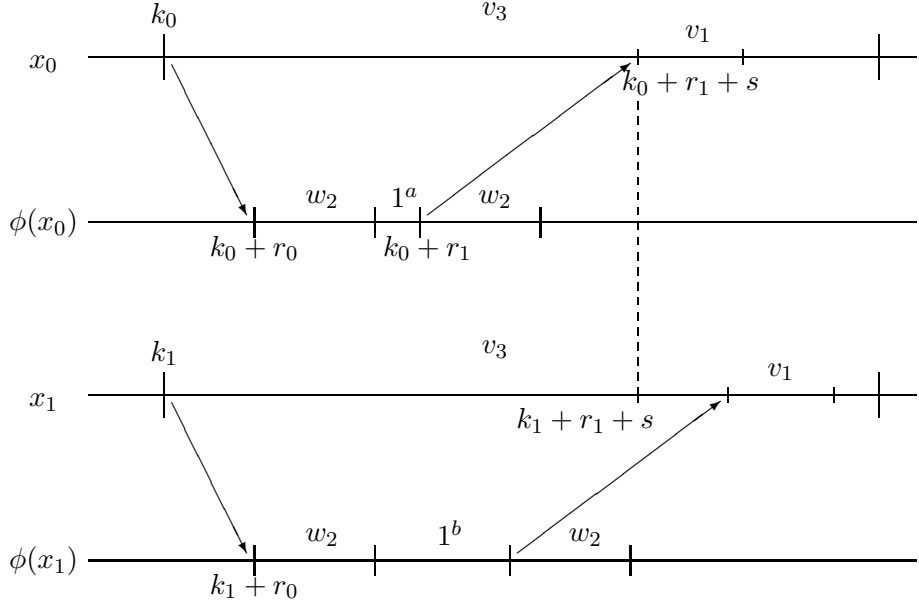

We now claim that $|b-a| \geq |v_1|$.  First, notice that since $\phi(x_0)$ has an occurrence of $w_2$ beginning at $k_0 + r_1$, there is an expected occurrence of $v_1$ in $x_0$ beginning at $k_0 + r_1 + s$.  As $0 \leq r_1  + s < |v_3|$, this expected occurrence of $v_1$ intersects, and thus is contained in, the expected occurrence of $v_3$ beginning at $k_0$ in $x_0$.  This clearly implies that $x_1$ has an expected occurrence of $v_1$ beginning at $k_1 + r_1 + s = k_1 + r_0 + |w_2| + a + s$.  But since $\phi (x_1)$ has an occurrence of $w_2$ beginning at $k_1 + r_0 + |w_2| + b$, there is also an expected occurrence of $v_1$ in $x_1$ beginning at  $k_1 + r_0 + |w_2| + b + s$.  Since $a\neq b$, these two expected occurrences of $v_1$ are distinct, and hence they are disjoint.  This clearly implies that $|b - a| \geq |v_1|$.

Since $a$ and $b$ are both non-negative integers, either $a \geq |v_1|$ or $b \geq |v_1|$.  This contradicts the assumption that $|v_1|> Y_{\max}$.
\end{proof}

\subsubsection{Proof of Theorem \ref{thmreplacement} in the minimal case}

 Choose any $x \in X$ and $k \in \Z$ such that $x$ has an expected occurrence of $v_3$ beginning at $k$.  Let $w$ be the smallest subword of $\phi (x)$ containing every expected occurrence of $w_2$ beginning at a position $k + r^\prime$, for some $0 \leq r^\prime  + s< |v_3|$.  In other words, $w$ is the smallest subword of $\phi (x)$ containing every expected occurrence of $w_2$ that is associated to the expected occurrence of $v_3$ in $x$ beginning at $k$.    
Let $q$ be such that this occurrence of $w$ in $\phi (x)$ begins at $k + q$.  Note if an expected occurrence of $w_2$ in $\phi (x)$ is not associated to the expected occurrence of $v_3$ beginning at $k$ in $x$, then it is disjoint from the occurrence of $w$ at $k + q$.

It follows from Proposition \ref{proprepcase1} that $w$ and $q$ are independent of the choice of $x$ and $k$.  Thus, if $x^\prime \in X$ has an expected occurrence of $v_3$ beginning at $k^\prime \in \Z$, then $\phi (x^\prime)$ has an occurrence of $w$ beginning at $k^\prime + q$.  Moreover, an expected occurrence of $w_2$ in $\phi (x)$ is contained in the occurrence of $w$ beginning at $k^\prime + q$ if it is associated to the expected occurrence of $v_3$ beginning at $k$ in $x$; otherwise it is disjoint from the occurrence of $w$ at $k^\prime + q$.

We have now defined $w$ and $q$ and stated the relevant facts.  To prove the theorem we must show that for all $x \in X$ and all $k \in \Z$:
\begin{enumerate}
\item  $\phi (x)$ is built from $w$; and 
\item  $x$ has an expected occurrence of $v_3$ beginning at $k$ iff $\phi (x)$ has an expected occurrence of $w$ beginning at $k+ q$.
\end{enumerate}

To show that $\phi (x)$ is built from $w$ we need to define a collection of expected occurrences of $w$.  We say an occurrence of $w$ in $\phi (x)$ beginning at $j$ is {\em expected} iff $x$ has an expected occurrence of $v_3$ beginning at $j - q$.  Recall that an expected occurrence of $w_2$ in $\phi (x)$ is contained in an expected occurrence of $w$ beginning at $j$ if it is associated to the expected occurrence of $v_3$ beginning at $j - q$ in $x$; otherwise it does not intersect the expected occurrence of $w$ beginning at $j-q$ in $x$.  Thus every occurrence of 0 is contained in exactly one expected occurrence of $w$, and $\phi (x)$ is built from $w$.  

Condition (2) above immediately follows the definition of expected occurrence of $w$.  This completes the proof of the Theorem \ref{thmreplacement} in the minimal case.

\subsection{Proof of Theorem \ref{thmreplacement} in the non-minimal case}
\label{RSnonminimal}

The proof of Theorem \ref{thmreplacement} is more intricate in the non-minimal case than in the minimal case, though there are significant parts of the argument that are essentially identical.  We will first state and prove Lemma \ref{lemmauniquefirst} and then begin the proof of Theorem \ref{thmreplacement}. 

\begin{lemma}  Let $(X, \sigma)$ be a non-minimal rank-1 system and let $k \in \Z$.
\label{lemmauniquefirst}
\begin{enumerate}
\item[(a)]  There is a unique $z \in X$ so that $z$ has a first occurrence of $0$ at position $k$.
\item[(b)]  There is a unique $z \in X$ so that $z$ has a last occurrence of $0$ at position $k$.
\end{enumerate}
\end{lemma}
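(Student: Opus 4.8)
The plan is to reduce to the case $k=0$ by composing with a power of $\sigma$, and then to treat (a) and (b) by essentially the same argument, the heart of which is that membership in $X$ makes an element extremely rigid near a one-sided ``end.'' For existence in (a) I would simply reuse the point constructed in the proof of Proposition~\ref{prop1}(c): let $z_0(i)=V(i)$ for $i\geq 0$ and $z_0(i)=1$ for $i<0$; it was shown there that $z_0\in X$, and since $V$ begins with $0$, the word $z_0$ has its first occurrence of $0$ at position $0$. For existence in (b) I would first note that $0\in A_V$, so $V=01^{a_1}01^{a_2}\cdots$, and since $(X,\sigma)$ is non-minimal the $a_i$ are unbounded; hence $01^m$ is a subword of $V$ for every $m$, and so (by Proposition~\ref{propfirst}) $z_0$ contains $01^m$ for every $m$. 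Shifting appropriately yields, for each $m$, a point $x_m\in X$ with $x_m(0)=0$ and $x_m(j)=1$ for $1\leq j\leq m$. Passing to a convergent subsequence $x_{m_l}\to u$ in the compact space $X$ gives $u\in X$ with $u(0)=0$ and $u(j)=1$ for all $j\geq 1$, i.e., $u$ has its last occurrence of $0$ at position $0$. The general $k$ is then handled by applying $\sigma^{-k}$.

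For uniqueness, suppose $z,z'\in X$ both have their first occurrence of $0$ at position $0$; I will show $z=z'$ (the argument for last occurrences is symmetric). Fix $v\in A_V$ and set $n=|v|$. By Proposition~\ref{propexp2} both $z$ and $z'$ are built from $v$, and by Proposition~\ref{propexp1} their collections of expected occurrences of $v$ are uniquely determined; in particular the $0$ at position $0$ of $z$ lies in a unique expected occurrence of $v$, occupying an interval $[c,c+n-1]$ containing $0$ on which $z$ agrees with $v$. Since $v$ begins with $0$ while $z$ is $1$ on $(-\infty,0)$, this forces $c=0$, so $z$ agrees with $v$ on $[0,n-1]$; the same holds for $z'$, so $z$ and $z'$ agree on $[0,n-1]$. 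Because $V$ is rank-1, $A_V$ contains words of arbitrarily large length, so $z$ and $z'$ agree on all of $[0,\infty)$; as both are constantly $1$ on $(-\infty,0)$, we conclude $z=z'$. Part (b) is identical except that one uses that $v$ \emph{ends} with $0$: the expected occurrence of $v$ through the $0$ at position $0$ cannot extend to the right of $0$ (where $z$ is $1$), so it must be the interval $[1-n,0]$, which forces agreement with $v$ on $[1-n,0]$ and hence, letting $n\to\infty$, agreement on $(-\infty,0]$.

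The routine steps are the uniqueness arguments and the existence half of (a). The step I expect to require the most care is the existence half of (b): one must be sure that $V$ really does contain arbitrarily long blocks $01^m$ -- this is exactly where non-minimality of $(X,\sigma)$ is used, via $V=01^{a_1}01^{a_2}\cdots$ with unbounded $a_i$ -- and then that a subsequential limit of the points $x_m$ still carries a $0$ at the origin, which is immediate since $x_{m_l}(0)=0$ for every $l$. No genuinely new tools are needed beyond the theory of expected occurrences developed in Section~\ref{secexpelement}.
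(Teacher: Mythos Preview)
Your proposal is correct and follows essentially the same approach as the paper: existence via compactness and the fact that $01^m$ occurs in $V$ for every $m$ (non-minimality), and uniqueness via the forced location of expected occurrences of arbitrarily long $v\in A_V$. The only cosmetic difference is that the paper's uniqueness argument fixes the last position of disagreement and then chooses $v$ with enough zeros to cover it, whereas you directly show that the expected occurrence through the boundary $0$ must begin (resp.\ end) at position $0$ and let $|v|\to\infty$; these are the same idea.
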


\begin{proof}
We will prove (b).  The proof of (a) is similar.

Choose $x \in X$ with an occurrence of 0.  We know that for each $n \in \N$, the word $01^n$ is a subword of $V$ and hence, by Proposition \ref{propfirst}, a subword of $x$; say $x$ has an occurrence of $01^n$ beginning at $k_n$.  For each $n \in \N$, $\sigma^{k_n}(x)$ has an occurrence of $01^n$ beginning at 0.  By passing to a subsequence if necessary, we may assume that $(\sigma^{k_n}(x): n \in \N)$ converges to some $z \in X$.  It is clear that $z$ has a last occurrence of 0 at position 0.

Suppose $z$ and $z^\prime$ are distinct elements of $X$ that each have a last occurrence of $0$ beginning at $k$.  Let $i \in \Z$ be as large as possible so that $z(i) \neq z^\prime (i)$.  Let $t = |\{j \geq i : z(j) = 0\}|$.  Let $v$ have more than $t$ occurrences of 0 and be such that $V$ is built from $v$.  Let $v = 0 1^{a_1} 0 1^{a_2} \ldots  1^{a_{r-1}}0$.  It follows from Lemma \ref{lemma1a} both $z$ and $z^\prime$ must have an occurrence of $v$ beginning at $k - |v| + 1$.  The occurrence of $v$ in each of $z$ and $z^\prime$ must contain the position $i$, contradicting the fact that $z(i) \neq z^\prime(i)$.
\end{proof}

We now begin the proof of Theorem \ref{thmreplacement} in the non-minimal case.

Fix $\phi : X \rightarrow Y$, an isomorphism between non-minimal rank-1 systems $(X, \sigma)$ and $(Y, \sigma)$.  Before we move ahead with the setup and proof as we did in the minimal case, we need an important lemma.

\subsubsection{Preliminary setup and some lemmas}

Using the techniques of Section \ref{seccase1setup}, we can choose $v_0, w_0 \in \mathcal{F}$ and $t, t^\prime \in \Z$ satisfying all of the following.
\begin{enumerate}
\item  $v_0 \in V$ and $w_0 \in W$.
\item  For all $x \in X$ and $k \in Z$:
\begin{enumerate}
\item  If $x$ has an expected occurrence of $v_0$ beginning at $k$, then $\phi (x)$ has an occurrence of 0 at position $k+t$.
\item  If $\phi (x)$ has an expected occurrence of $w_0$ beginning at $k$, then $x$ has an occurrence of 0 at position $k+t^\prime$.
\end{enumerate}
\end{enumerate}

We will use these this preliminary setup (i.e., our choices for $v_0$, $w_0$, $t$, and $t^\prime$) to prove some important lemmas.  Then we produce a different setup (the only thing we will keep from this initial setup is the choice of $t$) that we will use for the main argument.

There are three elements of $X$ that we want to have specific names.  Let $z_0$ denote the bi-infinite word that is constantly 1.  Let $z_1$ denote the element of $X$ that has a first occurrence of 0 at position 0.  Let $z_2$ denote the element of $X$ that has a last occurrence of 0 at position 0.

\begin{lemma}  Both of the following hold.
\begin{enumerate}
\item  $\phi (z_1)$ has a first occurrence of 0.
\item  $\phi (z_2)$ has a last occurrence of 0.
\end{enumerate}
\end{lemma}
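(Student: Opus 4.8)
The plan is to argue dynamically, exploiting the fact that the unique fixed point of a non-minimal rank-1 system is the constantly-$1$ bi-infinite word. Write $z_0$ and $z_0'$ for the constantly-$1$ elements of $X$ and $Y$ respectively; by Proposition \ref{prop1}(c) these are the unique periodic points of $X$ and $Y$, and each is fixed by $\sigma$. Since $\phi$ commutes with $\sigma$ and is a bijection, it carries fixed points to fixed points, so $\phi(z_0) = z_0'$. Also, $z_1$ and $z_2$ each contain an occurrence of $0$, so by Corollary \ref{cor1} they are not periodic; since $\phi$ is a shift-commuting homeomorphism it sends non-periodic points to non-periodic points, so $\phi(z_1)$ and $\phi(z_2)$ are not equal to $z_0'$, and therefore each contains an occurrence of $0$. (The existence and uniqueness of $z_1$, $z_2$ is exactly Lemma \ref{lemmauniquefirst}.)

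For (1): since $z_1$ has its first occurrence of $0$ at position $0$, we have $z_1(i) = 1$ for all $i < 0$, and hence $\sigma^{-n}(z_1)(i) = z_1(i-n) = 1$ for every fixed $i$ once $n > i$; that is, $\sigma^{-n}(z_1) \to z_0$ in $X$ as $n \to \infty$. Applying the continuous map $\phi$ and using $\phi \circ \sigma^{-n} = \sigma^{-n} \circ \phi$ together with $\phi(z_0) = z_0'$, we get $\sigma^{-n}(\phi(z_1)) \to z_0'$ in $Y$. Now suppose toward a contradiction that $\phi(z_1)$ had no first occurrence of $0$. Since $\phi(z_1)$ does contain an occurrence of $0$, this forces $\phi(z_1)(-n) = 0$ for infinitely many $n \in \N$, so $\sigma^{-n}(\phi(z_1))(0) = \phi(z_1)(-n) = 0$ for infinitely many $n$; but $z_0'(0) = 1$, contradicting $\sigma^{-n}(\phi(z_1)) \to z_0'$. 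Hence $\phi(z_1)$ has a first occurrence of $0$.

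Statement (2) follows by the symmetric argument with forward orbits in place of backward orbits: since $z_2$ has its last occurrence of $0$ at position $0$, we have $z_2(i) = 1$ for all $i > 0$, so $\sigma^{n}(z_2) \to z_0$ and hence $\sigma^{n}(\phi(z_2)) \to z_0'$; if $\phi(z_2)$ had no last occurrence of $0$, then $\phi(z_2)(n) = 0$ for infinitely many $n$, forcing $\sigma^{n}(\phi(z_2))(0) = 0$ infinitely often and again contradicting convergence to $z_0'$. I do not expect a real obstacle here; the only points needing care are the identification $\phi(z_0) = z_0'$ (which uses that $Y$, being non-minimal, has the constantly-$1$ word as its unique fixed point, via Proposition \ref{prop1}(c)) and the elementary remark that in a word containing a $0$, having no first (resp. no last) occurrence of $0$ means $0$s appear at arbitrarily negative (resp. arbitrarily positive) positions.
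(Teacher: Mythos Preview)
Your argument is correct and takes a genuinely different route from the paper's. The paper's proof uses the preliminary setup that immediately precedes the lemma: having already chosen $v_0, w_0, t, t'$ so that expected occurrences of $v_0$ in $x$ force $0$s in $\phi(x)$ and expected occurrences of $w_0$ in $\phi(x)$ force $0$s in $x$, the paper argues (for part (2)) that if $\phi(z_2)$ had no last $0$ then it would have expected occurrences of $w_0$ at arbitrarily large positions, each forcing a $0$ in $z_2$ at a correspondingly large position, contradicting that $z_2$ has a last $0$.

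Your argument is purely dynamical and does not touch the preliminary setup at all: you use only that the constantly-$1$ word is the unique fixed point (Proposition~\ref{prop1}(c)), that $\phi$ preserves fixed points and non-periodic points, and the elementary observation that $\sigma^{-n}(z_1)\to z_0$ (resp.\ $\sigma^{n}(z_2)\to z_0$) in the product topology. This is more self-contained and in fact shows the lemma holds for an arbitrary topological conjugacy between non-minimal rank-$1$ systems, without ever invoking the clopen-basis machinery of Proposition~\ref{propexpbasis}. The paper's approach has the minor advantage of reusing objects already in hand and foreshadowing how the forcing constants $t,t'$ will be used in Lemma~\ref{lemma1s}, but your argument is cleaner as a standalone proof of this particular lemma.
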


\begin{proof}
We will prove (b).  The proof of (a) is similar.

Since $z_2$ has an occurrence of 0, it has an expected occurrence of $v_0$.  This forces an occurrence of 0 in $\phi (z_2)$.  If $\phi (z_2)$ had no last occurrence of 0, then there would arbitrarily large $k$ for which $\phi (z_2)$ has an expected occurrence of $w_2$.  For each such $k$, we know that $x$ must have an occurrence of 0 at position $k + t^\prime$.  This contradicts the fact that $z_2$ has a last occurrence of 0.
\end{proof}

Let $f\in \Z$ be such that $\phi (z_1)$ has a first occurrence of 0 at position $f$.  Let $g \in \Z$ be such that $\phi (z_2)$ has a last occurrence of 0 at position $g$.

\begin{lemma}
\label{lemma1s}
There is some $A \in \N$ so that for $a \geq A$, $x \in X$, and $k \in \Z$, $01^a0$ occurs in $x$ beginning at $k$ iff $01^{a + f - g}0$ occurs in $\phi (x)$ beginning at $k + g$.
\end{lemma}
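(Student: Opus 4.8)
The plan is to establish the ``only if'' direction for all sufficiently large $a$, and then deduce the ``if'' direction from it by applying it to the inverse isomorphism $\phi^{-1}$. The point is that the element of $Y$ with a first occurrence of $0$ at $0$ is $\sigma^f(\phi(z_1))$, which maps under $\phi^{-1}$ to $\sigma^f(z_1)$, an element of $X$ with a first $0$ at $-f$, and similarly the element of $Y$ with a last $0$ at $0$ maps under $\phi^{-1}$ to an element with a last $0$ at $-g$; so for $\phi^{-1}$ the constants $f,g$ are replaced by $-f,-g$, the two directions are formally symmetric, and we take $A$ to be the larger of the two thresholds produced. For the ``only if'' direction the key tool is that $\phi$ is uniformly continuous and $\sigma$-equivariant (equivalently, a sliding block code by Curtis--Hedlund--Lyndon): fix $N\in\N$ such that $\phi(x)(i)$ depends only on $x\restriction[i-N,i+N]$, and, enlarging $N$ if necessary, assume $N\geq\max(|f|,|g|)$. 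Since $V$ is rank-1, $A_V$ contains words of arbitrarily large length, so fix $v_*\in A_V$ with $|v_*|>2N+|f|+|g|$ and set $A=|v_*|+1$ (finally replaced by the maximum of this and the corresponding $\phi^{-1}$-threshold). Suppose now $a\geq A$ and $x\in X$ has $01^a0$ beginning at $k$.

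The first, purely combinatorial, step is to note that every maximal run of $1$s inside $v_*$ is shorter than $|v_*|$, so the $0$ of $x$ at position $k$ must be the last symbol of the unique expected occurrence of $v_*$ in $x$ containing it (by Proposition~\ref{propexp2}, $x$ is built from $v_*$): otherwise the next $0$ of $x$ would occur within the following at most $|v_*|\leq a$ positions, contradicting $x\restriction[k+1,k+a]=1^a$. Symmetrically, the $0$ at position $k+a+1$ is the first symbol of its expected occurrence of $v_*$. The same observation applied to $z_2$ (whose last $0$ is at $0$, followed by $1^\infty$) and to $z_1$ shows $z_2\restriction[-|v_*|+1,0]=v_*=z_1\restriction[0,|v_*|-1]$. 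Consequently $x$ agrees with $\sigma^{-k}(z_2)$ throughout the interval $[k-|v_*|+1,\,k+a]$ and agrees with $\sigma^{-(k+a+1)}(z_1)$ throughout $[k+1,\,k+a+|v_*|]$.

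Applying $\phi$ and using $\sigma$-equivariance, $\phi(x)$ agrees with $\sigma^{-k}(\phi(z_2))$ on $[k-|v_*|+1+N,\,k+a-N]$ and with $\sigma^{-(k+a+1)}(\phi(z_1))$ on $[k+1+N,\,k+a+|v_*|-N]$. Since $\phi(z_2)$ has its last $0$ at $g$ (hence equals $1$ at every position $>g$) and $\phi(z_1)$ has its first $0$ at $f$ (hence equals $1$ at every position $<f$), reading off these two windows gives $\phi(x)(k+g)=\phi(z_2)(g)=0$, $\phi(x)(k+a+1+f)=\phi(z_1)(f)=0$, and $\phi(x)(i)=1$ for all $i$ with $k+g<i<k+a+1+f$; the inequalities $N\geq|f|,|g|$, $|v_*|>2N+|f|+|g|$, and $a\geq 2N$ are exactly what guarantee that the two windows contain the positions $k+g$ and $k+a+1+f$ and together cover $[k+g+1,\,k+a+f]$. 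Since $k+a+1+f=(k+g)+(a+f-g)+1$, this says that $\phi(x)$ has $01^{a+f-g}0$ beginning at $k+g$, which is the ``only if'' direction; the ``if'' direction follows as above.

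The step I expect to require the most care is the bookkeeping: the constants must be chosen in the order $N$, then $v_*$ (long relative to $N$), then $A$ (large relative to both), and then each of the interval inclusions in the preceding paragraph must be checked. The geometric content is simple---a long run of $1$s forces $x$ to coincide with a shift of $z_2$ on its left and a shift of $z_1$ on its right, and $\phi$ transports this local picture to the corresponding shifts of $\phi(z_2)$ and $\phi(z_1)$---so the only real work is threading the constants through, together with the routine appeals to non-degeneracy and aperiodicity (Corollary~\ref{cor1}), Proposition~\ref{propexp2}, and the uniqueness statements of Lemma~\ref{lemmauniquefirst} and of the preceding lemma on $\phi(z_1)$ and $\phi(z_2)$.
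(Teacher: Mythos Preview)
Your proof is correct, but it takes a genuinely different route from the paper's.

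The paper argues by contradiction and compactness: assuming the ``only if'' direction fails for arbitrarily large $a$, it extracts sequences $x_n$, $k_n$, $a_n$ and uses compactness to show that $\sigma^{k_n}(x_n)\to z_2$ and $\sigma^{k_n+a_n+1}(x_n)\to z_1$; continuity of $\phi$ then forces $\phi(x_n)$ to have the pattern $01^l\cdots 1^l0$ at the correct endpoints for large $n$, and any stray $0$ in the middle would lie in an expected occurrence of $w_0$, which via the preliminary constant $t'$ would force a $0$ inside the $1^{a_n}$ block of $x_n$---a contradiction. The paper then invokes the same ``similar reasoning'' for the reverse direction that you spell out with $\phi^{-1}$.

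Your argument is instead direct and constructive: you invoke Curtis--Hedlund--Lyndon to fix a block-code radius $N$, choose $v_*\in A_V$ long relative to $N,|f|,|g|$, and then read off $\phi(x)$ on the relevant window by exhibiting explicit agreement of $x$ with shifts of $z_2$ and $z_1$ on overlapping intervals. This avoids the compactness machinery and the auxiliary constants $w_0,t'$ from the paper's preliminary setup, and yields an explicit threshold $A$. The trade-off is that your proof leans on the block-code formulation (which the paper never states, working only with open sets $E_{v,i}$), and the interval bookkeeping is slightly more delicate; the paper's version is softer but less effective. Both proofs encode the same geometric insight---a long run of $1$s forces $x$ to look locally like $z_2$ on the left and $z_1$ on the right---and both handle the converse by symmetry through $\phi^{-1}$.
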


\begin{proof}
We will prove that for sufficiently large $a$, whenever $x\in X$ has an occurrence of $01^a 0$ beginning at $k$, $\phi (x)$ has an occurrence of $01^{a+f -g}0$ beginning at $k + g$.  Similar reasoning shows that for sufficiently large $a$, whenever $\phi (x)$ has an occurrence of $0 1^{a + f - g} 0$ beginning at $k+g$, $x$ has an occurrence of $01^a0$ beginning at $k$ (thus completing the proof). 

Recall that $z_0$ is the bi-infinite word that is constantly 1.  Note that $z_0 \in X \cap Y$ and that $\phi(z_0) = z_0$, since $z_0$ is the unique fixed point of both $(X, \sigma)$ and $(Y, \sigma)$.

Suppose (towards a contradiction) that it is not true that for sufficiently large $a$, whenever $x\in X$ has an occurrence of $01^a 0$ beginning at $k \in \Z$, $\phi (x)$ has an occurrence of $01^{a+f -g}0$ beginning at $k + g$.  Then we can find sequences $\{x_n\} \subseteq X$, $\{k_n\} \subseteq \Z$, $\{a_n\} \subseteq \N$ so that:
\begin{enumerate}
\item  $a_n \rightarrow \infty$;
\item  $x_n$ has an occurrence of $01^{a_n} 0$ beginning at $k_n$; and
\item  $\phi (x_n)$ does not have an occurrence of $01^{a_n + f - g} 0$ beginning at $k_n + g$.
\end{enumerate}

We know that for each $n$, there is an occurrence of $01^{a_n}$ in $\sigma^{k_n} (x_n)$ beginning at $0$.  We may assume (by passing to a subsequence, if necessary) that the sequence $\{\sigma^{k_n} (x_n)\}$ converges.  It must converge to $z_2$, for the element to which it converges must have a last occurrence of 0 at position 0.  Thus, $\phi (\sigma^{k_n} (x_n)) \rightarrow \phi (z_2)$.

We also know that for each $n$, there is an occurrence of $1^{a_n}0$ in $\sigma^{k_n + a_n + 1} (x_n)$ ending at $0$.  We may assume (by passing to a subsequence, if necessary) that the sequence $\{\sigma^{k_n + a_n + 1} (x_n)\}$ converges.  It must converge to $z_1$, for the element to which it converges must have a first occurrence of 0 at position 0.  Thus, $\phi (\sigma^{k_n + a_n + 1} (x_n)) \rightarrow \phi (z_1)$.

Choose $l\in \N$ large enough that:
\begin{enumerate}
\item  $l \geq |w_0|$;
\item  $l \geq -t^\prime - g$ (equivalently, $0 \leq  g + l + t^\prime$); and
\item  $l \geq f + t^\prime$ (equivalently $f - l + t^\prime \leq 0$).
\end{enumerate}

Choose $n \in \N$ large enough that:
\begin{enumerate}
\item  $\phi (\sigma^{k_n} (x_n))$ has an occurrence of $01^{l}$ beginning at $g$; and
\item  $\phi (\sigma^{k_n + a_n + 1} (x_n))$ has an occurrence of $1^l 0$ ending at $f$.
\end{enumerate}

Since $\phi$ and $\sigma$ commute, we have:
\begin{enumerate}
\item  $\phi (x_n)$ has an occurrence of $01^{l}$ beginning at $k_n + g$; and
\item  $\phi (x_n)$ has an occurrence of $1^l 0$ ending at $k_n + a_n + 1 + f$.
\end{enumerate}

We chose $x_n$, $k_n$, and $a_n$, so that $\phi (x_n)$ did not have an occurrence of $01^{a_n + f - g}0$ beginning at $k_n + g$ (and ending at $k_n + a_n + 1 + f$).  This implies that there must be some occurrence of 0 in $\phi (x_n)$ at position $i$, with $k_n + g + l < i \leq k_n + a_n + f -l$.  But this occurrence of 0 in $\phi (x_n)$ at position $i$ must belong to an expected occurrence of $w_0$; say this expected occurrence of $w_0$ begins at $j$.  Since $l \geq |w_0|$ (in particular, since $1^l$ is not a subword of $w_0$), we know that $k_n + g + l < j \leq k_n + a_n + f -l$.  The following figure illustrates the situation.

\begin{figure}[h]
\begin{center}
\setlength{\unitlength}{.1cm}
\begin{picture}(110,40)(0,0)
\put(10,8){\line(1,0){100}}
\put(10,30){\line(1,0){100}}
\put(4,6){\makebox(0,0)[b]{$\phi(x_n)$}}
\put(4,28){\makebox(0,0)[b]{$x_n$}}
\put(20,25){\makebox(0,0)[b]{$k_n$}}
\put(32,3){\makebox(0,0)[b]{$k_n+g$}}
\put(45,32){\makebox(0,0)[b]{$1^{a_n}$}}
\put(40,10){\makebox(0,0)[b]{$1^l$}}
\put(81,10){\makebox(0,0)[b]{$1^l$}}
\put(39,25){\makebox(0,0)[b]{$j+t'$}}
\put(20,32){\makebox(0,0)[b]{$0$}}
\put(65,32){\makebox(0,0)[b]{$0$}}
\put(53,3){\makebox(0,0)[b]{$j$}}
\put(97,3){\makebox(0,0)[b]{$k_n+a_n+f+1$}}
\put(72,25){\makebox(0,0)[b]{$k_n+a_n+1$}}
\put(32,10){\makebox(0,0)[b]{$0$}}
\put(87,10){\makebox(0,0)[b]{$0$}}
\put(53.5,10){\makebox(0,0)[b]{$0$}}
\put(58,9){\makebox(0,0)[b]{$w_0$}}
\put(53,7){\line(0,1){2}}

\put(52,9){\vector(-1,3){6.8}}
\put(20,29){\line(0,1){2}}

\put(32,7){\line(0,1){2}}
\put(45,7){\line(0,1){2}}
\put(62,7){\line(0,1){2}}
\put(74,7){\line(0,1){2}}
\put(87,7){\line(0,1){2}}

\put(44,29){\line(0,1){2}}
\put(65,29){\line(0,1){2}}

\end{picture}
\caption{\label{lemma1sfig} The proof of Lemma~\ref{lemma1s}.}
\end{center}
\end{figure}
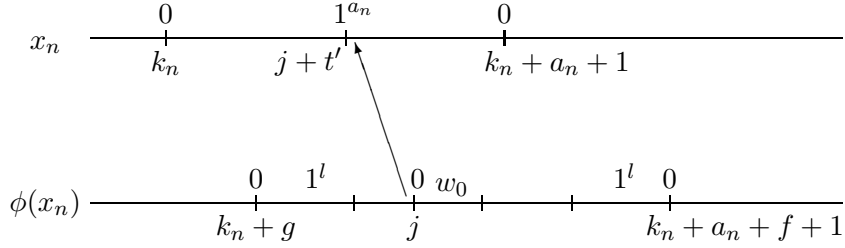

The occurrence of $w_0$ in $\phi (x_n)$ beginning at $j$ forces an occurrence of 0 in $x$ at position $j +t^\prime $.  Clearly:  $$k_n + g + l  + t^\prime  <  j  + t^\prime  \leq k_n + a_n + f - l + t^\prime$$  Since $0 \leq g + l  + t^\prime$ and $f - l + t^\prime \leq 0$, we have: $$k_n < j  + t^\prime \leq k_n + a_n$$  This is a contradiction, since $x_n$ has an occurrence of $0$ at position $j+t^\prime$, and $x_n$ has an occurrence of $01^{a_n} 0$ beginning at $k_n$.
\end{proof}

\subsubsection{Main setup}
We will use basic techniques to choose $v_1,w_2,v_3 \in \mathcal{F}$ and $r, s\in \Z$ satisfying all of the following (recall that $t$ has already been chosen).

\begin{enumerate}
\item  $v_1, v_3 \in A_V$ and $w_2 \in A_W$.
\item  $v_1 \preceq v_3$.
\item  $|v_1| > t - g$ (equivalently, $t < |v_1| + g$) and $ |v_1| > A + |f - g|$.
\item  For all $x \in X$ and all $k \in \Z$:
\begin{enumerate}
\item  If $x$ has an expected occurrence of $v_3$ beginning at $k$, then $\phi (x)$ has an expected occurrence of $w_2$ beginning at $k + r$;
\item  If $\phi (x)$ has an expected occurrence of $w_2$ beginning at $k$, then $x$ has an expected occurrence of $v_1$ beginning at $k+s$; and 
\item  If $x$ has an expected occurrence of $v_1$ beginning at $k$, then $\phi (x)$ has an occurrence of 0 at position $k+t$.
\end{enumerate}
\end{enumerate}

We know that $V$ is built from $v_0$ and that $v_0$ is such that if $x$ has an occurrence of $v_0$ beginning at $k$, then $\phi(x)$ has an occurrence of $0$ beginning at $k+t$.  Choose $v_1 \in B_V$ so that $|v_1| > \max\{|v_0|, t-g, A + |f-g|\}$.  Note that since $|v_1| > |v_0|$ and $v_1 \in B_V$, we have that $v_0 \prec v_1$.    Thus, if $x$ has an expected occurrence of $v_1$ beginning at $k$, then $x$ has an expected occurrence of $v_0$ beginning at $k$, which implies that $\phi (x)$ has an occurrence of 0 at position $k+t$.

We then choose $w_2$ and $s$ and then $v_3$ and $r$ as was done in the minimal case.

\subsubsection{The main argument in the non-minimal case}

We have a slightly different setup here than we did in the minimal case.  Below we state and prove one lemma and give a proof of Proposition \ref{proprepcase1} in the non-minimal case.  The proof of Theorem \ref{thmreplacement} in the non-minimal case proceeds from that proposition in exactly the same way as in the minimal case and we do not repeat that argument here.

\begin{lemma}
\label{lemmar+s}
Both of the following hold.
\begin{enumerate}
\item  $0 \leq r + s < |v_3|$
\item  $0 \leq s + t < |w_2|$
\end{enumerate}
\end{lemma}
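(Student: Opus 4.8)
The plan is to read all four inequalities directly off the three forcing conditions in item~4 of the main setup, by applying them to the two distinguished elements of $X$ — namely $z_1$, whose first occurrence of $0$ is at position $0$, and $z_2$, whose last occurrence of $0$ is at position $0$ — and to their images $\phi(z_1)$, $\phi(z_2)$, whose first (resp. last) occurrence of $0$ is at position $f$ (resp. $g$). The point is that in the non-minimal case these extremal elements exist, so whenever a forcing condition produces an occurrence of $0$ at a position lying before the first $0$, or after the last $0$, of the element in question, we have an immediate contradiction. This replaces the periodicity argument used in the minimal-case analogue, Lemma~\ref{lemma11}.

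For part (1): since $v_3\in A_V$, Proposition~\ref{propexp2} shows $z_1$ and $z_2$ are built from $v_3$, and since every word in $\mathcal{F}$ begins and ends with $0$, the expected occurrence of $v_3$ covering the $0$ at position $0$ in $z_1$ must begin at $0$, while the expected occurrence of $v_3$ covering the $0$ at position $0$ in $z_2$ must begin at $-|v_3|+1$. Applying condition 4(a) and then condition 4(b) to $z_1$ forces an expected occurrence of $v_1$ in $z_1$ beginning at $r+s$, hence a $0$ at position $r+s$; as $z_1$ has no $0$ before $0$, this gives $r+s\ge 0$. The same chain applied to $z_2$ forces a $0$ in $z_2$ at position $-|v_3|+1+r+s$; as $z_2$ has no $0$ after $0$, this gives $-|v_3|+1+r+s\le 0$, i.e. $r+s<|v_3|$.

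Part (2) is the mirror argument on the $Y$-side. Since $w_2\in A_W$, Proposition~\ref{propexp2} shows $\phi(z_1)$ and $\phi(z_2)$ are built from $w_2$, so the expected occurrence of $w_2$ covering the $0$ at position $f$ in $\phi(z_1)$ begins at $f$, and the expected occurrence of $w_2$ covering the $0$ at position $g$ in $\phi(z_2)$ begins at $g-|w_2|+1$. Applying condition 4(b) and then condition 4(c) to $\phi(z_1)$ forces a $0$ in $\phi(z_1)$ at position $f+s+t$, which cannot precede $f$, so $s+t\ge 0$; the same chain applied to $\phi(z_2)$ forces a $0$ in $\phi(z_2)$ at position $g-|w_2|+1+s+t$, which cannot exceed $g$, so $g-|w_2|+1+s+t\le g$, i.e. $s+t<|w_2|$. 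I do not expect a genuine obstacle here: the only points needing care are that the four elements really are built from $v_3$ (resp. $w_2$), which is exactly Proposition~\ref{propexp2} applied to $v_3\in A_V$ (resp. $w_2\in A_W$), and that the expected occurrence of the covering word lands precisely where claimed, which is forced by membership in $\mathcal{F}$. The length hypotheses in item~3 of the setup are not used for this lemma; they will enter only in the subsequent non-minimal analogue of Proposition~\ref{proprepcase1}.
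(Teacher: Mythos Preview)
Your proof is correct and follows essentially the same approach as the paper: read off the inequalities by applying the forcing chain $4(a)\to4(b)$ (resp.\ $4(b)\to4(c)$) to the extremal elements $z_1,z_2$ (resp.\ $\phi(z_1),\phi(z_2)$), and use the position of their first/last $0$ as a barrier. In fact the paper only writes out part~(a) and says ``the proof of~(b) is similar,'' so you have supplied more detail; note also that the paper's proof contains a harmless typo (it writes $z_0,z_1$ where $z_1,z_2$ are meant), and your version uses the correct names.
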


\begin{proof}
We will prove (a).  The proof of $(b)$ is similar.

Recall that $z_0 \in X$ has a first occurrence of 0 at position 0.  Thus, it has an expected occurrence of $v_3$ beginning at $0$.  This forces an expected occurrence of $v_1$ in $z_0$ beginning at $r +s$.  Since the first 0 of $z_0$ is at position 0, it must be that $0 \leq r + s$.

Recall that $z_1$ has a last occurrence of 0 at position 0.  Thus, it has an expected occurrence of $v_3$ beginning at $1 - |v_3|$.  This forces an expected occurrence of $v_1$ in $z_1$ beginning at $1 - |v_3| + r + s$.  This clearly cannot begin after position $0$.  Therefore, $1 - |v_3| + r + s \leq 0$.  Thus, $r + s < |v_3|$.
\end{proof}

The following figure illustrates the information we have so far for the non-minimal case.

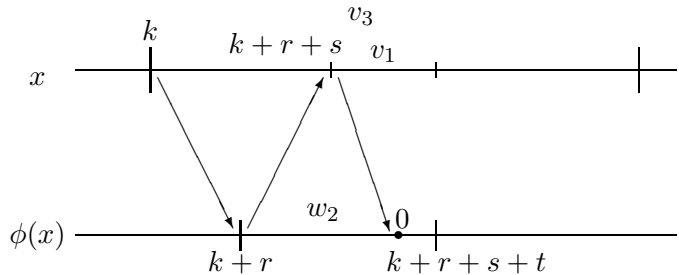
\begin{figure}[h]
\begin{center}
\setlength{\unitlength}{.1cm}
\begin{picture}(100,40)(0,0)
\put(10,8){\line(1,0){80}}
\put(10,30){\line(1,0){80}}
\put(5,6){\makebox(0,0)[b]{$\phi(x)$}}
\put(5,28){\makebox(0,0)[b]{$x$}}
\put(20,34){\makebox(0,0)[b]{$k$}}
\put(32,3){\makebox(0,0)[b]{$k+r$}}
\put(48,36){\makebox(0,0)[b]{$v_3$}}
\put(43,10){\makebox(0,0)[b]{$w_2$}}
\put(38,32){\makebox(0,0)[b]{$k+r+s$}}
\put(51,31){\makebox(0,0)[b]{$v_1$}}
\put(62,3){\makebox(0,0)[b]{$k+r+s+t$}}
\put(21,29){\vector(1,-2){10}}
\put(33,9){\vector(1,2){10}}
\put(45,29){\vector(1,-3){6.8}}
\put(53,8){\circle*{1}}
\put(53.5,9){\makebox(0,0)[b]{$0$}}

\put(20,27){\line(0,1){6}}
\put(85,27){\line(0,1){6}}
\put(32,6){\line(0,1){4}}
\put(58,6){\line(0,1){4}}
\put(44,29){\line(0,1){2}}
\put(58,29){\line(0,1){2}}

\end{picture}
\caption{\label{lemmar+sfig} Forced occurrences in the non-minimal case.}
\end{center}
\end{figure}

We now prove Proposition \ref{proprepcase1} in the non-minimal case

\begin{proof}
Suppose, towards a contradiction, that there is some $r^\prime$ that satisfies the hypothesis of the proposition, but not the conclusion of the proposition.  Either $r > r^\prime$ or $r< r^\prime$.  The arguments that leads to a contradiction in these cases are almost identical, and we give only one of them.  Suppose $r < r^\prime$.

Let $r_0$ be the largest natural number less than $r^\prime$ that satisfies both the hypothesis and the conclusion of the proposition.  Note that $r_0 \geq r$.  Let $r_1$ be the smallest natural number greater than $r_0$ that satisfies the hypothesis of the proposition but not the conclusion of the proposition.  Note that $r_0 \leq r^\prime$.

Choose $ x_0, k_0, x_1, k_1$ so that all of the following hold.
\begin{enumerate}
\item  $x_0$ has an expected occurrence of $v_3$ beginning at $k_0$.
\item  $\phi (x_0)$ has expected occurrences of $w_2$ beginning at $k_0 + r_0$ and $k_0 + r_1$.
\item  $x_1$ has an expected occurrence of $v_3$ beginning at $k_1$.
\item  $\phi (x_1)$ has an expected occurrence of $w_2$ beginning at $k_1 + r_0$, but not at $k_1 + r_1$.
\end{enumerate}

Let $a$ be such that the expected occurrence of $w_2$ in $\phi (x_0)$ beginning at $k_0+r_0$ is followed by $1^a$ and then by another expected occurrence of $w_2$.

We claim that $a = r_1 - r_0 - |w_2|$, i.e., that the first expected occurrence of $w_2$ in $\phi (x_0)$ after the expected occurrence of $w_2$ beginning at $k_0 + r_0$ is the one beginning at $k_0 + r_1$.  If not, then there is an expected occurrence of $w_2$ between those beginning at $k_0 + r_0$ and $k_0 + r_1$; say it begins at $k + \hat{r}$, with $r_0 < \hat{r} < r_1$. If whenever $x \in X$ has an expected occurrence of $v_3$ at $k$, $\phi (x)$ has an expected occurrence of $w_2$ beginning at $k + \hat{r}$, then we have contradiction with the maximality of $r_0$.  Otherwise, we have a contradiction with the minimality of $r_1$.  Therefore, $a = r_1 - r_0 - |w_2|$.

We now claim that there is an occurrence of 0 in $\phi (x_1)$ after the expected occurrence of $w_2$ that begins at $k_1 + r_0$.  Indeed, we know that there is an expected occurrence of $v_1$ in $x_0$ beginning at position $k_0 + r_1 + s$.  As $0 \leq r_1  + s < |v_3|$, this expected occurrence of $v_1$ intersects, and thus is contained in, the expected occurrence of $v_3$ beginning at $k_0$ in $x_0$.  Thus, there also is an expected occurrence of $v_1$ in $x_1$ at position $k_1 + r_1 + s$.  This forces an occurrence of 0 in $\phi (x_1)$ at position $k_1 + r_1 + s + t$.  To show that this occurrence of 0 is after the expected occurrence of $w_2$ that begins at $k_1 + r_0$, it suffices to show that $k_1 + r_0 + |w_2| \leq k_1 + r_1 + s + t$.  Recall that $r_1 - r_0 \geq |w_2|$.  By Lemma \ref{lemmar+s}, we know $0 \leq s + t$.  Thus, $k_1 + r_0 + |w_2| \leq k_1 + r_1 + s + t$.  Therefore, there is an occurrence of 0 in $\phi (x_1)$ after the expected occurrence of $w_2$ that begins at $k_1 + r_0$.

Let $b$ be such that the expected occurrence of $w_2$ in $\phi (x_1)$ beginning at $k_1$ is followed by $1^b$ and then by another expected occurrence of $w_2$.  Note that $b \neq a$, for otherwise $x_1$ would have an expected occurrence of $w_2$ beginning at $k_1 + r_1$.  Also, if $b< a$, there would be a contradiction with the minimality of $r_1$.  Thus $b > a$. Figure~\ref{proprepcase2fig} illustrates the information we have so far.

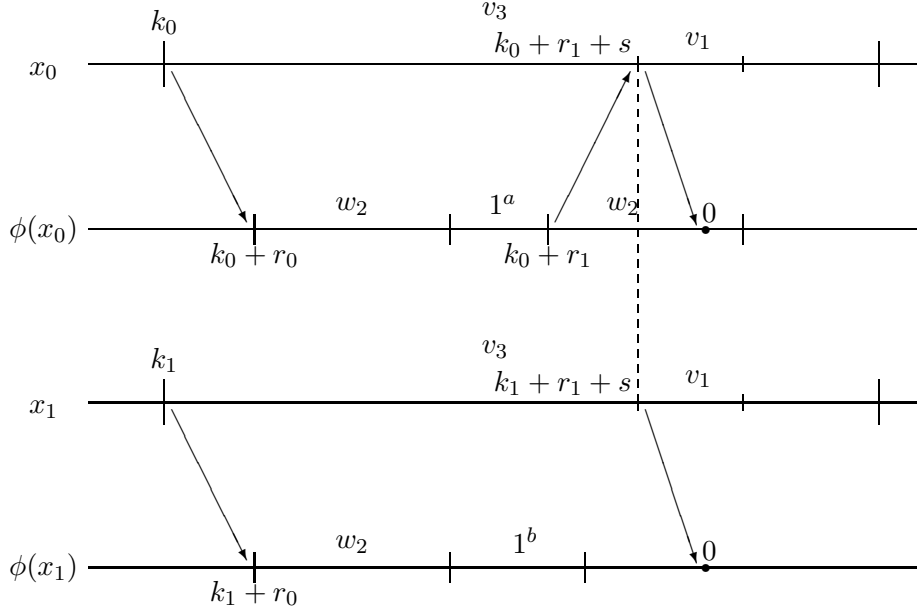
\begin{figure}[h]
\begin{center}
\setlength{\unitlength}{.1cm}
\begin{picture}(120,80)(0,0)

\put(0,45){
\put(10,8){\line(1,0){110}}
\put(10,30){\line(1,0){110}}
\put(4,6){\makebox(0,0)[b]{$\phi(x_0)$}}
\put(4,28){\makebox(0,0)[b]{$x_0$}}
\put(20,34){\makebox(0,0)[b]{$k_0$}}
\put(32,3){\makebox(0,0)[b]{$k_0+r_0$}}
\put(71,3){\makebox(0,0)[b]{$k_0+r_1$}}
\put(64,36){\makebox(0,0)[b]{$v_3$}}
\put(45,10){\makebox(0,0)[b]{$w_2$}}
\put(73,31){\makebox(0,0)[b]{$k_0+r_1+s$}}
\put(91,32){\makebox(0,0)[b]{$v_1$}}
\put(21,29){\vector(1,-2){10}}

\put(20,27){\line(0,1){6}}
\put(115,27){\line(0,1){6}}
\put(32,6){\line(0,1){4}}
\put(58,6){\line(0,1){4}}
\put(83,29){\line(0,1){2}}
\put(97,29){\line(0,1){2}}
\put(65,10){\makebox(0,0)[b]{$1^a$}}
\put(81,10){\makebox(0,0)[b]{$w_2$}}
\put(71,6){\line(0,1){4}}
\put(97,6){\line(0,1){4}}
\put(72,9){\vector(1,2){10}}
\put(84,29){\vector(1,-3){6.8}}
\put(92,8){\circle*{1}}
\put(92.5,9){\makebox(0,0)[b]{$0$}}
}

\multiput(83,73)(0,-2){21}{\line(0,-1){1}}

\put(0,0){
\put(10,8){\line(1,0){110}}
\put(10,30){\line(1,0){110}}
\put(4,6){\makebox(0,0)[b]{$\phi(x_1)$}}
\put(4,28){\makebox(0,0)[b]{$x_1$}}
\put(20,34){\makebox(0,0)[b]{$k_1$}}
\put(32,3){\makebox(0,0)[b]{$k_1+r_0$}}
\put(64,36){\makebox(0,0)[b]{$v_3$}}
\put(45,10){\makebox(0,0)[b]{$w_2$}}
\put(68,10){\makebox(0,0)[b]{$1^b$}}
\put(73,31){\makebox(0,0)[b]{$k_1+r_1+s$}}
\put(91,32){\makebox(0,0)[b]{$v_1$}}
\put(21,29){\vector(1,-2){10}}

\put(20,27){\line(0,1){6}}
\put(115,27){\line(0,1){6}}
\put(32,6){\line(0,1){4}}
\put(58,6){\line(0,1){4}}
\put(76,6){\line(0,1){4}}

\put(97,29){\line(0,1){2}}
\put(83,29){\line(0,1){2}}

\put(84,29){\vector(1,-3){6.8}}
\put(92,8){\circle*{1}}
\put(92.5,9){\makebox(0,0)[b]{$0$}}
}

\end{picture}
\caption{\label{proprepcase2fig} The proof of Proposition~\ref{proprepcase1} in the non-minimal case.}
\end{center}
\end{figure} 

We now claim that $|b-a| \geq |v_1|$.  We know there is an expected occurrence of $v_1$ in $x_0$ beginning at $k_0 + r_1 + s$.  As $0 \leq r_1  + s < |v_3|$, this expected occurrence of $v_1$ intersects, and thus is contained in, the expected occurrence of $v_3$ beginning at $k_0$ in $x_0$.  Thus, there also is an expected occurrence of $v_1$ in $x_1$ at position $k_1 + r_1 + s = k_1 + r_0 + |w_2| + a + s$.  Since $\phi (x_1)$ has an expected occurrence of $w_2$ beginning at $k_1 + r_0 + |w_2| + b$, there is also an expected occurrence of $v_1$ in $x_1$ beginning at  $k_1 + r_0 + |w_2| + b + s$.  Since $a\neq b$, these two expected occurrences of $v_1$ are distinct, and hence they are disjoint.  This implies that $|b - a| \geq |v_1|$.  

Since $b>a$ and $|b-a| \geq |v_1|$, we know that $b \geq |v_1| > A + |f-g|$.  Recall that $\phi(x_1)$ has an occurrence of $01^b0$ beginning at $k_1 + r_0 + |w_2| - 1$.  Therefore, by Lemma \ref{lemma1s}, there is an occurrence of $01^{b - f + g} 0$ in $x_1$ beginning at $k_1 + r_0 + |w_2| - 1 - g$.  

We claim that this occurrence of $01^{b - f + g} 0$ is completely contained in the expected occurrence of $v_3$ beginning at $k_1$.  It suffices to show that the position of the last occurrence of 0 contained in the expected occurrence of $v_3$ beginning at $k_1$ in $x_1$ is after position $k_1 + r_0 + |w_2| - 1 - g$.  Because $x_0$ has an expected occurrence of $v_1$ beginning at $k_0 + r_0 + |w_2| + a + s$ that is completely contained in the expected occurrence of $v_3$ beginning at $k_0$, $x_1$ has an expected occurrence of $v_1$ beginning at $k_1 + r_0 + |w_2| + a + s$ that is completely contained in the expected occurrence of $v_3$ beginning at $k_1$.  Since $v_1$ ends in 0, there is an occurrence of 0 at position $k_1 + r_0 + |w_2| + a + s + |v_1| - 1$ in $x$ that is completely contained in the expected occurrence of $v_3$ beginning at $k_1$.  Thus, it suffices to show: $$k_1 + r_0 + |w_2| - 1 - g < k_1 + r_0 + |w_2| + a + s + |v_1| - 1$$  This is the same as showing: $$0 < a + s  + |v_1| + g$$  We know that $0 \leq a$.  From Lemma \ref{lemmar+s}, we also know that $0 \leq s + t$.  We chose $v_1$ so that $t < |v_1| + g$.  Thus,  $$0 \leq a + s + t < a + s + |v_1| + g$$  

We now know that the occurrence of $01^{b -f + g} 0$ in $x_1$ beginning at $k_1 + r_0 + |w_2| - 1 - d$ is completely contained in the expected occurrence of $v_3$ beginning at $k^\prime$.  This implies that there is an occurrence of $01^{b - f + g} 0$ in $x_0$ beginning at $k_0 + r_0 + |w_2| - 1 - g$.  By Lemma \ref{lemma1s} there is an occurrence of $01^b0$ in $\phi(x_0)$ beginning at $k_0 + r_0 + |w_2| - 1$.  But we know there is an occurrence of $01^a0$ in $\phi(x_0)$ beginning at $k_0 + r_0 + |w_2| - 1$.  Thus, $a = b$, a contradiction.
\end{proof}

This completes the proof of Theorem \ref{thmreplacement} in the non-minimal case.

\subsection{The complexity of the isomorphism relation on $\mathcal{R}$}
\label{seccomplexity}

We want to understand the complexity of the (topological) isomorphism relation on $\mathcal{R}$ as a subset of $\mathcal{R} \times \mathcal{R}$ and also as a Borel equivalence relation. 

By Corollary \ref{correplacement} we know that $V$ and $W$ are isomorphic (i.e., the rank-1 systems associated to $V$ and $W$ are isomorphic) iff there are $v,w \in \mathcal{F}$ so that:
\begin{enumerate}
\item  $V$ is built from $v$ and $W$ is built from $w$; and
\item  $V$ has an expected occurrence of $v$ beginning at $k$ iff $W$ has an expected occurrence of $w$ beginning at $k$.
\end{enumerate}

We claim that given $v, w \in \mathcal{F}$, the conjunction of the two conditions above is closed.  To see this, first note that the statement ``$V$ is built from $v$" is equivalent to ``for all $n \in \N$ there is a finite word that begins with $V \restriction n$ and that is built from $v$," which is a closed condition.  This implies that the first condition above is closed.  Then note that if $V$ is built from $v$, then the truth of ``$V$ has an expected occurrence of $v$ beginning at $k$" depends only on the first $k + |v|$ values of $V$.  This implies that the conjunction of the two conditions is closed.

Since the set $\mathcal{F}$ is countable, we have shown the following.

\begin{proposition}
The isomorphism relation on $\mathcal{R}$ is $F_\sigma$ as a subset of $\mathcal{R} \times \mathcal{R}$.
\end{proposition}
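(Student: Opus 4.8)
The plan is to use the characterization of isomorphism furnished by Corollary~\ref{correplacement}: for $V,W\in\mathcal{R}$, the rank-1 systems associated to $V$ and $W$ are isomorphic iff there exist $v,w\in\mathcal{F}$ for which (i) $V$ is built from $v$ and $W$ is built from $w$, and (ii) for every $k$, $V$ has an expected occurrence of $v$ beginning at $k$ iff $W$ has an expected occurrence of $w$ beginning at $k$. Since $\mathcal{F}$ is countable, it suffices to show that for each fixed $(v,w)\in\mathcal{F}\times\mathcal{F}$ the set $C_{v,w}\subseteq\mathcal{R}\times\mathcal{R}$ of pairs $(V,W)$ satisfying (i) and (ii) is closed; the isomorphism relation is then $\bigcup_{(v,w)}C_{v,w}$, which is $F_\sigma$. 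Moreover, since the topology $T$ on $\mathcal{R}$ refines the subspace topology inherited from $\{0,1\}^\N$ (see Section~\ref{Setuplast}), it is enough to check that $C_{v,w}$ is closed in the product topology on $\{0,1\}^\N\times\{0,1\}^\N$, i.e., that it is cut out by a countable conjunction of conditions each depending on only finitely many coordinates.

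For condition (i) the subtle point is that the definition of ``built from'' quantifies existentially over a sequence of exponents, so it is not obviously closed. The key is the equivalent reformulation: $V$ is built from $v$ iff for every $n\in\N$ there is a finite word in $\mathcal{F}$ that begins with $V\restriction n$ and is built from $v$. The forward direction is immediate. For the converse one checks that, for large $n$, any finite word built from $v$ that extends $V\restriction n$ parses uniquely into occurrences of $v$ separated by blocks of $1$s --- uniqueness coming from the fact that the first symbol of $v$ is $0$, whereas the symbol that would follow a strictly shorter intervening block of $1$s is itself a $1$ --- so these parsings cohere into a parsing of $V$ itself, witnessing that $V$ is built from $v$. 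Each clause ``$V\restriction n$ extends to a finite word built from $v$'' depends on only finitely many coordinates of $V$, hence $\{V : V \text{ is built from } v\}$ is closed, and therefore so is the set of pairs $(V,W)$ satisfying (i). For condition (ii), one works on the (closed) set where (i) already holds; there, whether $V$ has an expected occurrence of $v$ beginning at $k$ is decided by $V\restriction(k+|v|)$ (and for $k<0$ both sides are vacuously false, so that instance imposes nothing), so each instance of the biconditional in (ii) defines a clopen set and (ii) is their conjunction over $k$. Intersecting with the closed set from (i) shows that $C_{v,w}$ is closed.

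I expect the reformulation of ``built from'' as a condition on initial segments to be the only step requiring genuine care: one must verify both that it is equivalent to the original definition --- the ``if'' direction being the substantive part, via the uniqueness-of-parsing observation --- and that each of its clauses really is a finite-coordinate condition. Everything afterward is routine bookkeeping about which coordinates a given assertion depends on, combined with the already-noted fact that $T$ is finer than the product topology, so that closedness in the coarser topology transfers to $\mathcal{R}\times\mathcal{R}$.
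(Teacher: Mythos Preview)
Your proof is correct and follows essentially the same approach as the paper's: both invoke Corollary~\ref{correplacement}, reformulate ``$V$ is built from $v$'' as the closed condition that every initial segment $V\restriction n$ extends to a finite word built from $v$, and observe that expectedness of an occurrence at $k$ depends only on the first $k+|v|$ coordinates. Your explicit remark that $T$ refines the subspace topology from $\{0,1\}^\N$ (so closedness transfers) and your justification of the ``if'' direction of the reformulation are details the paper leaves implicit, but the argument is otherwise the same.
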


We now want to understand the complexity of the isomorphism relation on $\mathcal{R}$ as a Borel equivalence relation.  We will use without elaboration the most basic results about hyperfinite Borel equivalence relations. These results, together with a discussion of the notion of Borel reducibility, can be found in \cite{DoughertyJacksonKechris}. 

\begin{theorem}
The isomorphism relation on $\mathcal{R}$ is Borel bi-reducible with $E_0$.
\end{theorem}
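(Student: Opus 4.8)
The plan is to prove both halves of the bi-reducibility, and the common engine is a concrete reformulation of isomorphism. For $V\in\mathcal R$ with canonical generating sequence $(v_n:n\in\N)$ and for $v\in A_V$, let $P(V,v)\subseteq\N$ be the set of positions at which an expected occurrence of $v$ begins in $V$, and write $P_n(V):=P(V,v_n)$; thus $P_0(V)$ is the set of positions of $0$ in $V$, the $P_n(V)$ decrease, each contains $0$, and $P_{n+1}(V)$ is gotten from $P_n(V)$ by retaining every $r_n$-th element, where $r_n\ge2$ is the number of copies of $v_n$ occurring in $v_{n+1}$. The heart of everything is the \textbf{Key Claim}: for $V,W\in\mathcal R$, the associated systems are topologically isomorphic iff $P_N(V)=P_M(W)$ for some $N,M\in\N$. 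The ``if'' direction is immediate: the equation $P_N(V)=P_M(W)$ says exactly that $(v_N,w_M)$ is a replacement scheme for $V$ and $W$ (the requirement that $V,W$ be built from $v_N,w_M$ being automatic), so Proposition~\ref{propreplacement} supplies a stable isomorphism.

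For the ``only if'' direction---the main technical step---start from an isomorphism and use Corollary~\ref{correplacement} to obtain a replacement scheme $(v,w)$; then $P(V,v)=P(W,w)$ as subsets of $\N$, and comparing consecutive differences of this common set shows the spacer sequences of $V$-built-from-$v$ and $W$-built-from-$w$ differ by the constant $\delta:=|w|-|v|$, so after interchanging $(V,v)$ and $(W,w)$ if necessary we may assume $\delta\ge0$. Using Corollary~\ref{corC}, pick $v_N\in B_V$ with $v\prec v_N$. Applying to the equal set $P(W,w)$ the same ``retain every $r$-th element'' grouping that turns $P(V,v)$ into $P(V,v_N)$ produces $P(W,w')$ for a genuine word $w'\in A_W$ (its spacers, those of $W$-built-from-$w$, are nonnegative), with $P(V,v_N)=P(W,w')$. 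It remains to check $w'\in B_W$. If not, there are $p\prec w'\prec p'$ in $A_W$ with $p\preceq_sp'$; replacing $p$ by $p\vee w$ if necessary (legitimate by Lemma~\ref{lemmasimply}) and then applying the translation that adds $\delta$ to every spacer, one gets words $p^V\prec v_N\prec (p')^V$ in $A_V$ with $p^V\preceq_s(p')^V$, contradicting $v_N\in B_V$. Hence $w'=w_M$ for the appropriate $M$, so $P_N(V)=P_M(W)$. This fundamentality check---the interaction of replacement schemes with the $B_V/B_W$ filtrations, with the sign of $\delta$ controlled only after normalizing to $\delta\ge0$---is the step I expect to require the most care.

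Granting the Key Claim, $\cong\ \le_B\ E_0$ follows quickly. The number $r_n$, hence the passage from $P_n(V)$ to $P_{n+1}(V)$, depends only on the set $P_n(V)$: since $B_V$ is totally ordered (Corollary~\ref{corBtotalorder}) and its elements are comparable to all of $A_V$ (Proposition~\ref{propsubset}), the requirement that $v_{n+1}$ be $\preceq$-least in $B_V$ above $v_n$ unwinds into a condition on which arithmetic-progression-indexed difference sequences of $P_n(V)$ are constant, a condition referring to $P_n(V)$ alone. Thus the whole sequence $(P_n(V))_n$ is recovered from $P_0(V)$ by iterating a single Borel map $T$ (defined on the Borel set of subsets of the form $P_0(V)$, coded inside $\{0,1\}^\N$), and the Key Claim becomes: $V\cong W$ iff $P_0(V)$ and $P_0(W)$ lie in the same $T$-orbit. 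As $V\mapsto P_0(V)$ is Borel, this is a Borel reduction of $\cong$ to the orbit equivalence relation of the single Borel function $T$, which is hyperfinite and hence Borel reducible to $E_0$ (see \cite{DoughertyJacksonKechris}).

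For the converse reduction, define a Borel map $f:\{0,1\}^\N\to\mathcal R$ by letting $f(x)$ be the rank-1 word with canonical generating sequence $v_0=0$ and
\[
v_{n+1}=\begin{cases}
v_n\,1^{2^{n+2}}\,v_n\,1^{2^{n+2}+1}\,v_n,& x(n)=0,\\
v_n\,1^{2^{n+2}+1}\,v_n\,1^{2^{n+2}}\,v_n,& x(n)=1.
\end{cases}
\]
Because the two spacers used at each level are distinct, one checks that these $v_n$ really are the successive elements of $B_{f(x)}$, so $f(x)$ is a non-degenerate rank-1 word; note also that $|v_n|$ depends only on $n$. If $x\,E_0\,y$, say $x,y$ agree from coordinate $K$ on, then $f(x),f(y)$ are built from $v_K$ (the same length on each side) with identical spacer sequences, so $P_K(f(x))=P_K(f(y))$ and $f(x)\cong f(y)$ by the Key Claim. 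Conversely, if $f(x)\cong f(y)$ then $P_N(f(x))=P_M(f(y))$ for some $N,M$; equating difference sequences forces the two spacer sequences to differ by a constant, and since level $n$ is built with spacers of size $\approx 2^{n+2}$ this is impossible unless $N=M$ (the offset needed at level $N$ would be incompatible with the one forced one level up); then the constant is $0$, the level-$N$ spacers agree, so $x(N)=y(N)$ as the bit is coded by their order, and inductively $x(N{+}k)=y(N{+}k)$ for all $k$, i.e.\ $x\,E_0\,y$. Combining the two reductions gives the asserted Borel bi-reducibility.
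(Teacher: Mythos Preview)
Your argument is essentially correct, though it proceeds somewhat differently from the paper's.

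Your Key Claim is exactly the combination of Corollary~\ref{correplacement} with Lemma~\ref{lemmaRS1}; the paper isolates the latter as a separate lemma and proves it with the same three-case analysis ($w\preceq p$, $p\prec w$, $p$ and $w$ incomparable) that you compress into ``replacing $p$ by $p\vee w$ if necessary.'' One small slip: in the incomparable case the replacement that works is $w$, not $p\vee w$---from $(p\wedge w)\preceq_s(p\vee w)$ and $p\preceq_s p'$ one gets via Lemma~\ref{lemmasimply}(a) that $(p\wedge w)\preceq_s p'$, and then Lemma~\ref{lemmasimply}(b) gives $w\preceq_s p'$; one does not directly obtain $(p\vee w)\preceq_s p'$. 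This is easily repaired and does not affect the argument.

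For $\cong\,\le_B E_0$, the paper takes a more concrete route: it defines a norm $\|(v,w)\|$ on replacement schemes and shows directly that isomorphism is the increasing union of the finite Borel equivalence relations $\sim_k$ given by ``there is a replacement scheme $(v,w)$ with $v\in B_V$, $w\in B_W$, $\|(v,w)\|\le k$.'' Your approach---observing that $P_{n+1}(V)$ depends only on $P_n(V)$ (this is where Lemma~\ref{lemmaRS1} and the comparability of $B_V$ with $A_V$ are used), so that isomorphism becomes tail equivalence under a single Borel map and is therefore hyperfinite---is equally valid and arguably more conceptual, though it leans on a black box from \cite{DoughertyJacksonKechris}. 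One point worth making explicit in your write-up: the map $T$ really is an endomorphism of $\{P_0(V):V\in\mathcal R\}$, since $P_1(V)=P_0(V')$ for the non-degenerate rank-1 word $V'$ obtained by collapsing each expected $v_1$ to a single $0$; you should verify $V'\in\mathcal R$ and that its canonical generating sequence corresponds to $(v_n)_{n\ge 1}$.

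For $E_0\le_B\,\cong$, the paper uses the simpler encoding $v_{m+1}=v_m1v_mv_m$ or $v_mv_m1v_m$. Your variant with spacers $2^{n+2},2^{n+2}+1$ works too, and your growth argument forcing $N=M$ is correct; the paper's version avoids this step because both of its spacers have size at most $1$, so comparing the \emph{position} of the second expected block immediately forces $N=M$.
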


We will prove the above theorem in two stages.  In Proposition \ref{propreduction} we will show that there is a Borel reduction from $E_0$ to the isomorphism relation on $\mathcal{R}$.  It should be mentioned that the map that we show is a Borel reduction is not new.  It was shown in \cite{Fieldsteel} that if $\alpha, \beta \in 2^\N$ are not $E_0$ related, then the image (under this map) of $\alpha$ and $\beta$ are not measure-theoretically isomorphic.  The map is also discussed in \cite{delJuncoRaheSwanson} and \cite{Rudolph}.   

Then in Proposition \ref{prophyperfinite}, we will show that the topological isomorphism relation on $\mathcal{R}$ is hyperfinite.  By a theorem of Dougherty, Jackson, and Kechris (see \cite{DoughertyJacksonKechris}), this is equivalent to the existence of a Borel reduction from the topological isomorphism relation on $\mathcal{R}$ to $E_0$.

Before stating and proving Propositions \ref{propreduction} and \ref{prophyperfinite}, we give a definition and a lemma.

\begin{definition}
Suppose $v, \tilde{v}, w, \tilde{w} \in \mathcal{F}$ with $\tilde{v} = v 1^{a_1} v 1^{a_2} \ldots 1^{a_{r-1}}v$ and $w =w 1^{b_1}w 1^{b_2} \ldots 1^{b_{r-1}}w$.   We say that {\it $\tilde{w}$ is built from $w$ in the same way that $\tilde{v}$ is built from $v$} if for each $0 < i < r$, $a_i + |v| = b_i + |w|$.
\end{definition}

Remarks:
\begin{enumerate}
\item  This is the natural finite analog of the definition of a replacement scheme for $V$ and $W$ (with $\tilde{v}$ playing the role of a finite $V$ and $\tilde{w}$ playing the role of a finite $W$).
\item  If $(v,w)$ is a replacement scheme for $V$ and $W$ and $v \prec \tilde{v} \in A_V$, then the unique $\tilde{w} \in \mathcal{F}$ such that $\tilde{w}$ is built from $w$ in the same way that $\tilde{v}$ is built from $v$ is such that $\tilde{w} \in A_W$.  Moreover, $(\tilde{v}, \tilde{w})$ is a replacement scheme for $V$ and $W$.
\end{enumerate}

\begin{lemma}
\label{lemmaRS1}
Suppose $(v,w)$ is a replacement scheme for $V$ and $W$ and $v \prec \tilde{v} \in B_V$.  Then there is some $\tilde{w} \in B_W$ so that $(\tilde{v}, \tilde{w})$ is a replacement scheme for $V$ and $W$.
\end{lemma}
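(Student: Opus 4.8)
The plan is first to produce the candidate $\tilde w$ and then to verify it works. Let $\tilde w$ be the word that is built from $w$ in the same way that $\tilde v$ is built from $v$. Since $v \prec \tilde v \in B_V \subseteq A_V$, the second of the remarks above gives that $\tilde w \in A_W$ and that $(\tilde v, \tilde w)$ is a replacement scheme for $V$ and $W$; moreover $\tilde v$ contains at least two copies of $v$, so $\tilde w$ contains at least two copies of $w$ and hence $w \prec \tilde w$. Thus it remains only to prove that $\tilde w \in B_W$.

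Before doing so I would record the order-theoretic behaviour of the correspondence. Applying that same remark to the replacement scheme $(v,w)$ for $V,W$ and to the replacement scheme $(w,v)$ for $W,V$, one obtains mutually inverse bijections $u \mapsto u^{*}$ and $p \mapsto p^{\dagger}$ between $\{u \in A_V : v \preceq u\}$ and $\{p \in A_W : w \preceq p\}$, where $u^{*}$ (resp.\ $p^{\dagger}$) is built from $w$ (resp.\ $v$) in the same way $u$ (resp.\ $p$) is built from $v$ (resp.\ $w$). Each of these maps alters every gap-length by the fixed quantity $|v|-|w|$, and a word of the form $v1^{a_1}v\cdots v$ begins and ends with $v$ while one of the form $w1^{b_1}w\cdots w$ begins and ends with $w$, so concatenations are sent to concatenations with no stray $1$'s at the seams. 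From this one checks routinely that, for $v \preceq u_1,u_2$, $u_1 \preceq u_2 \iff u_1^{*}\preceq u_2^{*}$, $u_1 \prec u_2 \iff u_1^{*}\prec u_2^{*}$, and $u_1 \preceq_s u_2 \iff u_1^{*}\preceq_s u_2^{*}$; in particular $\tilde w^{\dagger}=\tilde v$ and $w^{\dagger}=v$, and any obstruction to $\tilde w$ lying in $B_W$ that lives entirely above $w$ is carried by $\dagger$ to an obstruction to $\tilde v$ lying in $B_V$.

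Now suppose, toward a contradiction, that $\tilde w \notin B_W$, so there are $p,p' \in A_W$ with $p \prec \tilde w \prec p'$ and $p \preceq_s p'$; by Lemma~\ref{lemmasimply}(b) also $p \preceq_s \tilde w$ and $\tilde w \preceq_s p'$. If $w \preceq p$, then $w \preceq p \prec \tilde w \prec p'$, all four words lie above $w$, and applying $\dagger$ together with the previous paragraph gives $p^{\dagger}\prec\tilde v\prec (p')^{\dagger}$ in $A_V$ with $p^{\dagger}\preceq_s (p')^{\dagger}$, contradicting $\tilde v \in B_V$. Otherwise put $q = p \wedge w$, the meet in the lattice $(A_W,\preceq)$ of Proposition~\ref{proplattice}; then $q \prec w$. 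I claim $q \preceq_s p'$. This is immediate if $p \prec w$ (then $q = p$ and $q \preceq_s p'$ is the hypothesis). If instead $p$ and $w$ are incomparable, then by Proposition~\ref{propincomparable} we have $q \prec p \prec p\vee w$ and $q \preceq_s p\vee w$; since $p,w \preceq \tilde w$ we also have $p\vee w \preceq \tilde w$, so Lemma~\ref{lemmasimply}(a) applied to $q \preceq p \prec (p\vee w)\preceq \tilde w$ (using $q \preceq_s p\vee w$ and $p \preceq_s \tilde w$) yields $q \preceq_s \tilde w$, and Lemma~\ref{lemmasimply}(a) applied once more to $q \preceq p \prec \tilde w \preceq p'$ (using $q \preceq_s \tilde w$ and $p \preceq_s p'$) yields $q \preceq_s p'$. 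In either case $q \preceq w \preceq p'$ with $q \preceq_s p'$, so Lemma~\ref{lemmasimply}(b) gives $w \preceq_s p'$; transporting by $\dagger$ we obtain $v = w^{\dagger}\preceq_s (p')^{\dagger}$ while $v \prec \tilde v \prec (p')^{\dagger}$ in $A_V$, again contradicting $\tilde v \in B_V$. Hence $\tilde w \in B_W$, and $(\tilde v,\tilde w)$ is the required replacement scheme.

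The bookkeeping of the second paragraph must be carried out with some care precisely because $\preceq_s$ is not transitive; but the genuine difficulty is the case distinction in the third paragraph, specifically the words $p \in A_W$ lying \emph{below} $w$, to which $\dagger$ cannot be applied directly. The device that resolves this is to descend to the meet $p \wedge w$ and then use the quasi-transitivity encoded in Lemma~\ref{lemmasimply} to route the ``built simply'' relation back through $w$ itself, after which $\dagger$ does apply.
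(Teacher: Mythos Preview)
Your proof is correct and follows essentially the same approach as the paper: construct $\tilde w$ as the word built from $w$ in the same way $\tilde v$ is built from $v$, then derive a contradiction from $\tilde w\notin B_W$ by a case split on how the witness $p$ sits relative to $w$ (your three subcases match the paper's Cases~1--3). The only cosmetic differences are that you package the transfer back to $A_V$ via the explicit bijections $*,\dagger$ whereas the paper constructs the transported words $t,t'$ by hand in each case, and your incomparable subcase invokes Lemma~\ref{lemmasimply}(a) twice where once (on the chain $q\preceq p\prec(p\vee w)\preceq p'$, using $q\preceq_s p\vee w$ and $p\preceq_s p'$ directly) would suffice.
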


\begin{proof}
Choose $\tilde{w} \in A_W$ so that $\tilde{w}$ is built from $w$ in the same way that $\tilde{v}$ is built from $v$.  Then $\tilde{w} \in A_W$ and $(\tilde{v}, \tilde{w})$ is a replacement scheme for $V$ and $W$.  We need further to show that $\tilde{w} \in B_W$.

Suppose that $\tilde{w} \notin B_W$.  Then there exist $u,u^\prime \in A_W$ so that $u \prec \tilde{w} \prec u^\prime$ and $u \preceq_s u^\prime$.  We now have three possibilities:  $w \preceq u$, $u \prec w$, or $w$ and $u$ are incomparable.

Case 1:  Suppose $w \preceq u$.  Let $t$ ($t^\prime$) be such that $t$ ($t^\prime$) is built from $v$ in the same way that $u$ ($u^\prime$) is built from $w$.  It is straightforward to check that $t \prec \tilde{v}\prec t^\prime$ and $t \preceq_s t^\prime$.  Thus, $\tilde{v} \notin B_V$, which is a contradiction.

Case 2:  Suppose $u \prec v$.  Then, by the second part of Lemma \ref{lemmasimply}, we know that $w \preceq_s u^\prime$.  We also know that $w \prec \tilde{w} \prec u^\prime$.  Now let, as in the previous case,  $t^\prime$ be such that $t^\prime$ is built from $v$ in the same way that $u^\prime$ is built from $w$.  It is straightforward to check that $v \prec \tilde{v} \prec t^\prime$ and $v \preceq_s t^\prime$.  Thus, $\tilde{v} \notin B_V$, which is a contradiction.

Case 3:  Now suppose that $u$ and $w$ are incomparable.  By Proposition \ref{propincomparable} we know that $(u \wedge w) \prec u \prec (u \vee w)$ and $(u \wedge w) \preceq_s (u \vee w)$.  But we also know that $u \vee w \prec u^\prime$ (since $\tilde{w} \prec u^\prime$ and $\tilde{w}$ is built from each of $u$ and $w$) and that $u \preceq_s u^\prime$.  Thus, by the first part of Lemma \ref{lemmasimply} we know that $u \wedge w \preceq_s u^\prime$.  Then by the second part of Lemma \ref{lemmasimply} we know that $w \preceq_s u^\prime$.  Now let, as in the previous two cases,  $t^\prime$ be such that $t^\prime$ is built from $v$ in the same way that $u^\prime$ is built from $w$.  It is straightforward to check that $v \prec \tilde{v} \prec t^\prime$ and $v \preceq_s t^\prime$.  Thus, $\tilde{v} \notin B_V$, which is a contradiction.
\end{proof}

\subsubsection{The reduction from $E_0$ to $\mathcal{R}$}
\label{nonsmooth}

\begin{proposition}
\label{propreduction}
There is a Borel reduction from $E_0$ to the topological isomorphism relation on $\mathcal{R}$.
\end{proposition}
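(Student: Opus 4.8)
The plan is to exhibit an explicit Borel map $\alpha\mapsto V_\alpha$ from $2^\N$ into $\mathcal R$ and then to verify, via Corollary \ref{correplacement}, that $\alpha\,E_0\,\beta$ exactly when $V_\alpha$ and $V_\beta$ are isomorphic. I would build $V_\alpha$ by cutting into two pieces at each stage. Fix once and for all a rapidly increasing sequence $(M_n)$ of even natural numbers with $M_{n+1}>2\big(1+\sum_{i\le n}(M_i+1)\big)$, for instance $M_n=4^{4^n}$. Given $\alpha$, set $T_n=M_n+\alpha(n)$ and define recursively $v_0=0$, $a^{(n)}=T_n-|v_n|$, and $v_{n+1}=v_n\,1^{a^{(n)}}\,v_n$; since $|v_n|=1+\sum_{i<n}T_i$, the growth condition guarantees $a^{(n)}>0$ and, in fact, that $(a^{(n)})_{n}$ is strictly increasing. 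Let $V_\alpha\in 2^\N$ be the infinite word having every $v_n$ as an initial segment. The map $\alpha\mapsto V_\alpha$ is plainly Borel, and $V_\alpha$ is not eventually periodic, so $V_\alpha\in\mathcal R$. The first substantive point, which I also expect to require the most care, is to show that $A_{V_\alpha}=B_{V_\alpha}=\{v_n:n\in\N\}$, so that $(v_n)$ is the canonical generating sequence of $V_\alpha$: using the descriptions of $A_V$ and $B_V$ in terms of the gap function $L_V$ from Section \ref{secbuiltfrom}, any $u\in A_{V_\alpha}$ has a power-of-two number of zeros (it divides the number $2^n$ of zeros of $v_n$ for all large $n$, by Lemma \ref{lemmacharbuilt}), and equals the corresponding $v_n$; moreover $L_{V_\alpha}(k)=a^{(\nu_2(k))}$, where $\nu_2(k)$ is the largest $j$ with $2^j$ dividing $k$, and since the $a^{(l)}$ are pairwise distinct, $L_{V_\alpha}$ is not periodic modulo any $s$ that is not a power of $2$, which rules out all other candidates; and for $i<n<l$ the word $v_l$ is not built simply from $v_i$ (its gaps over $v_i$ already include the two distinct values $a^{(i)},a^{(i+1)}$), so each $v_n\in B_{V_\alpha}$.

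The engine of the argument is the computation of the positions of the expected occurrences of $v_n$ in $V_\alpha$. Unwinding the recursion, the gap following the $k$-th copy of $v_n$ in $V_\alpha$ is $a^{(n+\nu_2(k))}$, so the sequence of ``copy-length-plus-gap'' values --- which, by the definition preceding Lemma \ref{lemmaRS1}, is precisely the data recording whether two rank-1 words are ``built in the same way'' from their generators, equivalently the data recording where the expected occurrences sit --- equals $|v_n|+a^{(n+j)}=T_{n+j}-\sum_{i=n}^{\,n+j-1}T_i=:D^{(n)}_j$, with the value $D^{(n)}_j$ occurring exactly at the positions $k$ with $\nu_2(k)=j$. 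Writing $(w_m)$ and $\tilde T_{l}=M_l+\beta(l)$ for the analogous data attached to $V_\beta$, and $\tilde D^{(m)}_j$ for its profile values, it follows that $(v_n,w_m)$ is a replacement scheme for $V_\alpha$ and $V_\beta$ precisely when $D^{(n)}_j=\tilde D^{(m)}_j$ for every $j\ge 0$.

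The two implications are now short. If $\alpha\,E_0\,\beta$, fix $N$ with $\alpha(k)=\beta(k)$ for $k\ge N$; then $T_k=\tilde T_k$ for $k\ge N$, so $D^{(N)}_j=\tilde D^{(N)}_j$ for all $j$, hence $(v_N,w_N)$ is a replacement scheme for $V_\alpha$ and $V_\beta$ and Corollary \ref{correplacement} gives $V_\alpha\cong V_\beta$. Conversely, if $V_\alpha\cong V_\beta$, Corollary \ref{correplacement} supplies a replacement scheme $(v,w)$; since $v\in A_{V_\alpha}=\{v_n\}$ and $w\in A_{V_\beta}=\{w_m\}$ we may write $v=v_n$ and $w=w_m$, whence $D^{(n)}_j=\tilde D^{(m)}_j$ for all $j$. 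Taking $j=0$ gives $T_n=\tilde T_m$, i.e.\ $M_n+\alpha(n)=M_m+\beta(m)$; since $(M_l)$ is strictly increasing with consecutive gaps at least $2$, this forces $M_n=M_m$, so $n=m$ and $\alpha(n)=\beta(n)$. A straightforward induction on $j$, using $D^{(n)}_j-D^{(n)}_{j-1}=T_{n+j}-2T_{n+j-1}$, then yields $T_{n+j}=\tilde T_{n+j}$ and hence $\alpha(n+j)=\beta(n+j)$ for every $j$; thus $\alpha$ and $\beta$ agree beyond $n$, i.e.\ $\alpha\,E_0\,\beta$. The only point beyond the delicate verification of the first paragraph that requires a little attention is keeping straight the (routine) equivalences among ``replacement scheme for $V$ and $W$'', ``$W$ is built from $w$ in the same way $V$ is built from $v$'', and ``$V$ and $W$ have the same positions of expected occurrences of $v$ and $w$ respectively''.
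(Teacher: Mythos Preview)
Your argument is correct, but it takes a genuinely different route from the paper's. The paper encodes $\alpha$ via a Chacon--type three--cut rule, setting $v_{m+1}=v_m 1 v_m v_m$ or $v_m v_m 1 v_m$ according to $\alpha(m)$; you instead use a two--cut rule $v_{n+1}=v_n 1^{a^{(n)}} v_n$ with a large, $\alpha$--dependent spacer. Your explicit formula $L_{V_\alpha}(k)=a^{(\nu_2(k))}$ lets you read off $A_{V_\alpha}=\{v_n\}$ directly from Lemma~\ref{lemmacharbuilt}(a), whereas the paper argues via the lattice structure (Proposition~\ref{propincomparable}) and the fact that no word sits strictly between $v_m$ and $v_{m+1}$. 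For the converse implication the paper invokes Lemma~\ref{lemmaRS1} to upgrade an arbitrary replacement scheme to one of the form $(v_m,w_n)$ and then compares the second expected occurrence; your approach is more arithmetic, computing the profile $D^{(n)}_j=T_{n+j}-\sum_{i=n}^{n+j-1}T_i$ and using the gap condition on $(M_l)$ to force $n=m$ and then $T_{n+j}=\tilde T_{n+j}$ by induction. Both approaches are clean; yours makes the $E_0$ structure very transparent through the parameters $T_n$, while the paper's ties into the classical Chacon/Fieldsteel examples already in the literature.

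One small point to tighten: your first justification that the number of zeros $r$ of $u\in A_{V_\alpha}$ must divide $2^n$ ``by Lemma~\ref{lemmacharbuilt}'' is not quite right as stated, since Lemma~\ref{lemmacharbuilt}(b) presupposes comparability rather than yielding it. Your second argument, via the constancy of $L_{V_\alpha}$ on nonzero residue classes mod $r$ (Lemma~\ref{lemmacharbuilt}(a)) together with the injectivity of $j\mapsto a^{(j)}$, is the one that actually does the work and is fully correct; I would drop the divisibility phrasing and lead with that.
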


\begin{proof}  We construct a Borel function from $2^\N$ to $\mathcal{R}$ so that $\alpha \in 2^\N$ and  $\beta \in 2^\N$ are $E_0$-related (i.e., they agree on all but finitely many coordinates) iff their images are topologically isomorphic.

For $\alpha \in 2^\N$, we produce $V$ as follows.  We first let $v_0 = 0$ and then inductively define: 
\[
v_{m + 1}=
     \begin{cases}
	v_m 1 v_m v_m, & \textnormal{if  }  \alpha(m) = 0;\\
	v_m v_m 1 v_m, & \textnormal{if  }  \alpha(m) = 1.
     \end{cases}
\]
Note that each $v_m$ is a proper initial segment of $v_{m+1}$.  Let $V$ be the limit of the $v_m$.  It is clear that $V$ is built from each $v_m$ and thus is rank-1.    We will show below that, in fact, $A_V = \{v_m : m \in \N\} = B_V$.  We will use the fact that there is no $w \in V$ so that $v_m \prec w \prec v_{m+1}$ (this follows from the way that $v_{m+1}$ is built from $v_m$).

We first claim that $A_V = \{v_m : m \in \N\}$.  We already showed above that $A_V \subseteq \{v_m : m \in \N\}$.  Suppose, towards a contradiction, that $V$ is built from $v$ but that $v \notin \{v_m : m \in \N\}$.  Let $m$ be as large as possible so that $v_m \prec v$ (note that such an $m$ exists, since $0 = v_0 \prec v$).  By assumption, $v \neq v_{m+1}$ and $v_{m+1} \nprec v$.  We also know that $v \nprec v_{m+1}$, for otherwise we would have $v_m \prec v \prec v_{m+1}$.  Thus, $v$ and $v_{m+1}$ are incomparable.  By Proposition \ref{propincomparable}, we have that $(v \wedge v_{m+1}) \prec v_{m+1} \prec (v \vee v_{m+1})$ and $(v \wedge v_{m+1}) \preceq_s (v \vee v_{m+1})$.  Thus, by Lemma \ref{lemmasimply} we have that $(v \wedge v_{m+1}) \preceq_s v_{m+1}$.  Since $v_m \preceq v \wedge v_{m+1} \prec v_{m+1}$, it must be that $v_m =    v \wedge v_{m+1}$.  Thus $v_m \preceq_s v_{m+1}$, which contradicts the way that $v_{m+1}$ is built from $v_m$.  

We now have shown that $A_V = \{v_m : m \in \N\}$.  That $B_V = \{v_m : m \in \N\}$ follows from the fact that no $v_{m+1}$ is built simply from $v_m$.  Since $B_V$ is infinite, we know that $V$ is non-degenerate, and thus, in $\mathcal{R}$ (see Proposition \ref{propBinfinite}).

It is clear that this map is continuous and, therefore, Borel.  

Let $\alpha, \beta \in 2 ^\N$.  Let $\alpha$ produce the sequence of words $(v_m: m \in \N)$ and the infinite word $V$.  Let $\beta$ produce the sequence of words $(w_n : n \in \N)$ and the infinite word $W$.  We need to show that $\alpha$ and $\beta$ are $E_0$-related iff $V$ and $W$ are isomorphic (by Corollary \ref{correplacement}, this happens iff there is a replacement scheme for $V$ and $W$).

If $\alpha$ and $\beta$ are $E_0$-related, then there is some $N\in \N$ so that $\alpha (n) = \beta(n)$, for all $n \geq N$.  It is easy to check that this implies that $(v_N, w_N)$ is a replacement scheme form $V$ and $W$.  

Now suppose there is a replacement scheme for $V$ and $W$.  To show that $\alpha (n) = \beta(n)$, for sufficiently large $n \geq N$, it suffices to show that $(v_n, w_n)$ is a replacement scheme for $V$ and $W$, for sufficiently large $n$.  Indeed, if both $(v_n, w_n)$ and $(v_{n+1}, w_{n+1})$ are replacement schemes for $V$ and $W$ then $w_{n+1}$ must be built from $w$ in the same way that $v_{n+1}$ is built from $v_n$; this clearly implies that $\alpha(n) = \beta(n)$.

We first claim that there is some $n \in \N$ such that $(v_n, w_n)$ is a replacement scheme for $V$ and $W$.  We know there is a replacement scheme for $V$ and $W$.  We also know that $A_V = \{v_m : m \in \N\}$ and that $A_W = \{w_n : n \in \N\}$.  Let $m, n \in \N$ be such that $(v_m, w_n)$ is a replacement scheme for $V$ and $W$.  We need to show that $m = n$.  Suppose, towards a contradiction, that $m \neq n$ and assume, without loss of generality, that $m < n$.  Note that $|v_n| = |w_n|$ and that $|v_m| = |w_m|$.  We know that $V$ has an expected occurrence of $v_m$ beginning at $|v_m|$ or $|v_m| + 1$ (depending on whether $\alpha(m) = 0$ or $\alpha(m) = 1$), but that $W$ does not have an expected occurrence of $w_n$ at either $|v_m|$ or $|v_m| + 1$ (the second expected occurrence of $w_n$ in $W$ begins either at $|w_n|$ or $|w_n| +1$).  This contradicts the fact that $(v_m, w_n)$ is a replacement scheme for $V$ and $W$.

We now claim that if $(v_n, w_n)$ is a replacement scheme for $V$ and $W$, then $(v_{n+1}, w_{n+1})$ is a replacement scheme for $V$ and $W$.  Suppose $(v_n, w_n)$ is a replacement scheme for $V$ and $W$.  We know that $v_n \prec v_{n+1} \in B_V$ and thus, by Proposition \ref{lemmaRS1}, we know that for some $n^\prime$ so that $(v_{n+1}, w_{n^\prime})$.  The same argument used in the preceding paragraph shows that $n^\prime = n+1$.  Thus, $(v_{n+1}, w_{n+1})$ is a replacement scheme for $V$ and $W$.
\end{proof}

\subsubsection{Hyperfiniteness}  
\label{hyperfinite}

\begin{proposition}
\label{prophyperfinite}
The topological isomorphism relation on $\mathcal{R}$ is hyper\-finite.
\end{proposition}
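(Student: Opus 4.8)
The plan is to reduce, via Corollary~\ref{correplacement}, to a purely combinatorial condition on the canonical generating sequences and to recognize that condition as a shift‑tail‑equivalence relation on a sequence space. Write $\cong$ for the topological isomorphism relation on $\mathcal R$. First I would note that $\cong$ is a \emph{countable} Borel equivalence relation: by Corollary~\ref{correplacement} every isomorphism gives a replacement scheme $(v,w)$ for the two words, and such a scheme is determined by $V$ together with $v$ and $w$ (the expected occurrences of $v$ in $V$ force the expected occurrences of $w$, hence force $W$); so each $\cong$‑class is countable. Next, using Corollary~\ref{corC}, the remark preceding Lemma~\ref{lemmaRS1}, and Lemma~\ref{lemmaRS1} applied in both directions, I would show that $V\cong W$ iff there are $i,j\in\N$ with $(v_i,w_j)$ a replacement scheme for $V$ and $W$, where $(v_m)$ and $(w_n)$ are the canonical generating sequences; moreover, when this holds, $(v_{i+t},w_{j+t})$ is a replacement scheme for every $t\ge 0$ and, for each $t$, $w_{j+t+1}$ is built from $w_{j+t}$ in the same way that $v_{i+t+1}$ is built from $v_{i+t}$. (Here one uses that a replacement scheme of the form $(v,\cdot)$ for $V$ and $W$ has at most one possible second coordinate, to pin down indices.)

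\textbf{Encoding.} Encode $V\in\mathcal R$ by $\tau(V)=(\tau_m(V))_{m\ge 0}$, where, writing $v_{m+1}=v_m1^{a_1}v_m\cdots 1^{a_{r-1}}v_m$, we set $\tau_m(V)=\bigl(r,(a_1+|v_m|,\dots,a_{r-1}+|v_m|)\bigr)$; this lives in $\Sigma^{\N}$ for a suitable countable $\Sigma$. One checks $\tau$ is Borel (the maps $V\mapsto v_m^V$ are Borel) and injective (from $\tau(V)$ recover $|v_0|=1$, then $v_1$, then $|v_1|$, and so on, hence $V$). The point of the previous paragraph — together with the observation that the length differences $|v_{i+t}|-|w_{j+t}|$ are constant in $t$, which is what makes ``built the same way at every level'' equivalent to $\tau_{i+t}(V)=\tau_{j+t}(W)$ — is the key equivalence:
$$V\cong W\iff \exists\,i,j\ \forall\,t\ \ \tau_{i+t}(V)=\tau_{j+t}(W),$$
i.e. $\cong$ is the pullback along $\tau$ of the shift‑tail‑equivalence relation $E_{\mathrm{st}}$ on $\Sigma^{\N}$. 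Since $\tau$ is an injective Borel map, $\tau(\mathcal R)$ is Borel and $\cong$ is Borel isomorphic to the restriction of $E_{\mathrm{st}}$ to $\tau(\mathcal R)$. As a restriction of a hyperfinite Borel equivalence relation is hyperfinite, it remains to prove that $E_{\mathrm{st}}$ is hyperfinite.

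\textbf{The remaining step (and the main obstacle).} The ``no shift'' relation $E_{\mathrm t}$ on $\Sigma^{\N}$ — eventual equality of sequences — is the increasing union of the \emph{smooth} relations ``agree from coordinate $m$ onward'' (the tail is a complete invariant), hence hyperfinite; equivalently it is the orbit equivalence relation of the locally finite group $\bigoplus_{\N}\mathrm{Sym}(\Sigma)$ acting coordinatewise. One then has to pass to $E_{\mathrm{st}}\supseteq E_{\mathrm t}$, which is generated by $E_{\mathrm t}$ together with the one‑sided shift (the shift normalizes $E_{\mathrm t}$), equivalently which carries a Borel cocycle onto $\Z$ with kernel $E_{\mathrm t}$. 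Writing $E_{\mathrm{st}}=\bigcup_nH_n$ with $H_n$ generated by $E_{\mathrm t}$ and the pairs of ``shift distance'' at most $n$, one has an increasing union of Borel equivalence relations, so it suffices to see each $H_n$ is hyperfinite; this can be carried out with the basic toolkit (increasing unions of finite equivalence relations, finite‑index extensions, smoothness, orbit relations of $\Z$‑ and of locally finite group actions). This verification — handling the ``up to shift'' over the \emph{infinite}‑classed hyperfinite relation $E_{\mathrm t}$, with the set of realized shift distances possibly unbounded — is the step I expect to require the most care; in effect it is the statement that a shift‑tail extension of a hyperfinite relation is hyperfinite. (Alternatively, one can avoid $\tau$ entirely and argue directly: let $\cong_0\subseteq\cong$ be the sub‑relation given by replacement schemes $(v_m,w_m)$ with matching indices; then $\cong_0=\bigcup_mG_m$ with $G_m=\{(V,W):(v_m^V,w_m^W)\text{ is a replacement scheme}\}$ a Borel equivalence relation with \emph{finite} classes — since $|w_m^W|$ cannot exceed the least gap between consecutive expected $v_m^V$‑occurrences in $V$, there are only finitely many candidate words — so $\cong_0$ is hyperfinite, and the index offset of the matched ``ray'' provides a Borel $\Z$‑valued cocycle on $\cong$ with kernel $\cong_0$; the obstacle is then, again, that a $\Z$‑extension of a hyperfinite relation is hyperfinite.)
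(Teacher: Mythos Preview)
Your approach is correct in outline and does yield the result, but it takes a genuinely different and longer route than the paper. The paper avoids your ``main obstacle'' entirely with a direct decomposition: define a norm on replacement schemes by $\|(v,w)\| = |v| + \min_i a_i = |w| + \min_i b_i$ (well defined since $a_i + |v| = b_i + |w|$ for every $i$), and set $V \sim_k W$ iff some replacement scheme $(v,w)$ for $V,W$ has $v\in B_V$, $w\in B_W$, and $\|(v,w)\|\le k$. Transitivity follows from Lemma~\ref{lemmaRS1} together with the observation that $\|(u,\cdot)\|$ depends only on $u$ and $U$; each $\sim_k$-class is finite since $|v|,|w|\le k$ leaves only finitely many candidate pairs and each pair determines $W$ from $V$; and the $\sim_k$ increase to $\cong$ by Corollaries~\ref{correplacement} and~\ref{corC} plus Lemma~\ref{lemmaRS1}. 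The key point is that the norm is an invariant of the replacement scheme itself, independent of the indices $i,j$ in the two canonical generating sequences, so the index-offset problem you isolate simply never arises.

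Your encoding by $\tau$ identifies $\cong$ with shift-tail equivalence on a sequence space, which is structurally illuminating, but the price is exactly the step you flag: one must then show shift-tail equivalence is hyperfinite. That is true---it is the tail relation of a single countable-to-one Borel function (the one-sided shift), and such relations are hyperfinite by the results in \cite{DoughertyJacksonKechris}---but this is less elementary than the paper's self-contained argument, and your by-hand sketches (increasing unions $H_n$, $\Z$-extensions of hyperfinite relations) do not complete it. Your parenthetical alternative via $\cong_0=\bigcup_m G_m$ is in fact very close to the paper's argument, missing only its key idea: replace the sequence index $m$ by the norm $k$, and the offset cocycle vanishes.
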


\begin{proof}
We will show that the isomorphism relation on $\mathcal{R}$ is a countable union of finite equivalence relations.  In order to define the equivalence relations, we need a norm on replacement schemes.  Suppose $(v,w)$ is a replacement scheme for $V$ and $W$ and that $$V = v 1^{a_1} v 1^{a_2} \ldots  \hspace{.5in}  \textnormal{and} \hspace{.5in} W = w 1^{b_1} w 1^{b_2} \ldots$$  Note that since $(v,w)$ is a replacement scheme, $a_i + |v| = b_i + |w|$, for each $i \geq 1$.  We now define $||(v,w)|| =|v| + \min \{a_i : i \geq 1\} = |w| + \min\{b_i : i \geq 1\}$.  Note that if $(u,v)$ is a replacement scheme for $U$ and $V$ and $(v,w)$ is a replacement scheme for $V$ and $W$, then $(u,w)$ is a replacement scheme for $U$ and $W$ and $||(u,v)|| = ||(v,w)|| = ||(u,w)||$.

For $k \geq 1$ and $V, W \in \mathcal{R}$, we say $V \sim_k W$ iff there is a replacement scheme $(v,w)$ for $V$ and $W$ such that $v \in B_V$, $w \in B_W$, and $||(v,w)|| \leq k$.  

We first show that each $\sim_k$ is an equivalence relation.  It is clear that $\sim_k$ is reflexive and symmetric.  We need to show each $\sim_k$ is transitive.  Suppose $U,V,W \in \mathcal{R}$ with $U \sim_k V$ and $V \sim_k W$.  Let $(u,\tilde{v})$ be a replacement scheme witnessing that $U \sim_k V$ and $(v, w)$ be a replacement scheme witnessing that $V \sim_k W$.  Without loss of generality, assume that $|v| \leq |\tilde{v}|$.  Since $v, \tilde{v} \in B_V$, we know that $v \preceq \tilde{v}$ (see Proposition \ref{propcompatible}).  Now, by Lemma \ref{lemmaRS1}, there is $\tilde{w} \in B_W$ so that $(\tilde{v}, \tilde{w})$ is a replacement scheme.  Now $(u,\tilde{w})$ is a replacement scheme for $U$ and $W$ and that $||(u,\tilde{w})|| = ||(u,\tilde{v})||$.  Thus $U \sim_k W$.

It is easy to see that each $\sim_k$ equivalence class is finite (thus, that each $\sim_k$ is a finite equivalence relation).  Indeed, let $V \in \mathcal{R}$ and notice that if $(v,w)$ witnesses that $V \sim_k W$, then $v, w \in \mathcal{F}$ with $|v|, |w| \leq k$.  As there are only finitely many pairs $(v,w)$ such that $v, w \in \mathcal{F}$ and $|v|, |w| \leq k$, there are only finitely many $W \in \mathcal{R}$ such that $V \sim_k W$.

Finally, we now show that the isomorphism relation on $\mathcal{R}$ is the union of the equivalence relations $\sim_k$.  It is clear that if $V \sim_k W$, then $V$ and $W$ are isomorphic.  On the other hand, if $V, W \in \mathcal{R}$ are isomorphic, then by Corollary \ref{correplacement} there is some replacement scheme $(v,w)$ for $V$ and $W$.  Choose $\tilde{v} \in B_V$ so that $v \preceq \tilde{v}$.  By Lemma \ref{lemmaRS1}, there is $w \in B_W$ so that $(\tilde{v},\tilde{w})$ is a replacement scheme for $V$ and $W$.  Then $V \sim_{||(\tilde{v},\tilde{w})||} W$.

We have shown that each $\sim_k$ is a finite equivalence relations and that their union is the isomorphism relation on $\mathcal{R}$.  Thus, the isomorphism relation on $\mathcal{R}$ is hyperfinite.
\end{proof}

\subsection{The inverse problem for non-degenerate rank-1 systems}
\label{secinverse}
We want to know when a non-degenerate rank-1 system $(X, \sigma)$ is (topologically) isomorphic to its inverse, $(X, \sigma^{-1})$.

\begin{definition}  For various objects $o$, we define the {\em reverse} of $o$, denoted by $\overline{o}$, as follows.
\begin{enumerate}
\item  For a finite word $\alpha = (\alpha_0, \alpha_1, \ldots, \alpha_n)$, let $\overline{x} = (\alpha_n, \ldots, \alpha_1, \alpha_0)$.  
\item  For an bi-infinite word $x \in \{0,1\}^\Z$, let $\overline{x}$ be the unique bi-infinite word such that $\overline{x}(k) = x(-k)$ for all $k \in \Z$. 
\item  For a set of bi-infinite words $X$, let $\overline{X} = \{\overline{x}: x \in X\}$.  
\item  For $V \in \mathcal{R}$ with canonical generating sequence $(v_n : n \in \N)$, let $\overline{V}$ denote the unique infinite word such that $\overline{v_n}$ is an initial segment of $\overline{V}$ for each $n \in \N$.  
\end{enumerate}
\end{definition}

Remarks:
\begin{enumerate}
\item  If $V \in \mathcal{R}$ has canonical generating sequence $(v_n : n \in \N)$, then $\overline{V} \in \mathcal{R}$ and has canonical generating sequence $(\overline{v_n}: n \in \N)$.
\item  If $X$ is the non-degenerate rank-1 system associated $V$, then $\overline{X}$ the non-degenerate rank-1 system associated to $\overline{V}$.  Moreover,  $(\overline{X}, \sigma)$ is topologically isomorphic to $(X, \sigma ^{-1})$.
\end{enumerate}

\begin{definition}
Suppose $w = v 1^{a_1} v 1^{a_2} \ldots 1^{a_{r-1} }v$.  Then we say that $w$ is built {\em symmetrically} from $v$ if for all $0 < i < r$, $a_i = a_{r-i}$.
\end{definition}

We remark that for $v,w \in \mathcal{F}$ with $v \preceq w$, the following are equivalent:
\begin{enumerate}
\item [ (i) ] $w$ is symmetrically built from $v$.
\item [ (ii) ] $w$ is built from $v$ in the same way that $\overline{w}$ is built from $\overline{v}$.
\end{enumerate}

\begin{proposition}  Let $V \in \mathcal{R}$ with canonical generating sequence $(v_n : n \in \N)$.  The following are equivalent:
\begin{enumerate}
\item [ (i) ] $(X, \sigma)$ is topologically isomorphic to $(X, \sigma^{-1})$.
\item [ (ii) ] There is some $N \in \N$ so that for all $n \geq N$, $v_{n+1}$ is built symmetrically from $v_n$.
\end{enumerate}
\end{proposition}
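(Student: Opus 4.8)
The plan is to translate both conditions into statements about replacement schemes for $V$ and its reverse $\overline V$, and then to run a short length computation to pin down an index. Recall from the remarks following the definition of the reverse that $(X,\sigma^{-1})$ is topologically isomorphic to $(\overline X,\sigma)$, where $\overline X$ is the non-degenerate rank-1 system associated to $\overline V\in\mathcal R$, and $\overline V$ has canonical generating sequence $(\overline{v_n}\,:\,n\in\N)$. So by Corollary~\ref{correplacement}, (i) holds iff there is a replacement scheme for $V$ and $\overline V$, and by Lemma~\ref{lemmaRS1} (enlarging the words if necessary) iff there are $n_0,m_0$ with $(v_{n_0},\overline{v_{m_0}})$ a replacement scheme for $V$ and $\overline V$. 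Throughout I would use the elementary observation that, for fixed rank-1 words and a fixed $v$, there is at most one $w$ with $(v,w)$ a replacement scheme: the positions of the expected occurrences of $v$ determine where the first two expected occurrences of $w$ must be, and since $w\in\mathcal F$ begins and ends with $0$ this forces $|w|$, hence $w$.

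For (ii)$\Rightarrow$(i) I would argue directly that $(v_N,\overline{v_N})$ is a replacement scheme for $V$ and $\overline V$. By the equivalence recorded just before the statement (valid since $v_n\preceq v_{n+1}$), the hypothesis says that for each $n\geq N$ the word $v_{n+1}$ is built from $v_n$ in the same way that $\overline{v_{n+1}}$ is built from $\overline{v_n}$. Since ``being built in the same way'' composes, induction gives that $v_m$ is built from $v_N$ in the same way $\overline{v_m}$ is built from $\overline{v_N}$, for every $m\geq N$; as $|v_N|=|\overline{v_N}|$, the two decompositions carry identical exponent sequences, so the expected occurrences of $v_N$ inside the initial segment $v_m$ of $V$ occupy the same positions as the expected occurrences of $\overline{v_N}$ inside the initial segment $\overline{v_m}$ of $\overline V$. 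Letting $m\to\infty$, the expected occurrences of $v_N$ in $V$ and of $\overline{v_N}$ in $\overline V$ occur at the same positions; since also $v_N\in A_V$ and $\overline{v_N}\in A_{\overline V}$, this is exactly the assertion that $(v_N,\overline{v_N})$ is a replacement scheme, and Corollary~\ref{correplacement} then yields (i).

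For (i)$\Rightarrow$(ii) the crux is to show $n_0=m_0$. Suppose $(v_a,\overline{v_b})$ is a replacement scheme for $V$ and $\overline V$ with $b>a$. Writing $V=v_a1^{\alpha_1}v_a1^{\alpha_2}\cdots$ and $\overline V=\overline{v_b}1^{\beta_1}\overline{v_b}1^{\beta_2}\cdots$, the replacement-scheme condition forces the expected occurrences on the two sides to begin at the same positions, whence $|v_a|+\alpha_i=|v_b|+\beta_i$ for all $i$, so $\alpha_i\geq|v_b|-|v_a|$ for every $i$. But the exponents occurring when $v_{a+1}$ is built from $v_a$ are among the $\alpha_i$, and $v_{a+1}$ contains at least two copies of $v_a$, so $|v_{a+1}|\geq 2|v_a|+(|v_b|-|v_a|)=|v_a|+|v_b|>|v_b|$, which is impossible since $b\geq a+1$. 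Thus every replacement scheme of this shape has $b\leq a$; applying this to $(v_{n_0},\overline{v_{m_0}})$, and to $(\overline{v_{m_0}},v_{n_0})$ regarded as a replacement scheme for $\overline V$ and $\overline{\overline V}=V$ (whose canonical generating sequences are $(\overline{v_n})$ and $(v_n)$), gives $m_0\leq n_0$ and $n_0\leq m_0$, so $n_0=m_0$.

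Given $n_0=m_0$, so that $(v_{n_0},\overline{v_{n_0}})$ is a replacement scheme, I would then bootstrap: if $(v_n,\overline{v_n})$ is a replacement scheme, Lemma~\ref{lemmaRS1} applied with $v_n\prec v_{n+1}\in B_V$ yields $\overline{v_j}\in B_{\overline V}$ with $(v_{n+1},\overline{v_j})$ a replacement scheme, and the length estimate (used in both directions as above) forces $j=n+1$; hence $(v_n,\overline{v_n})$ is a replacement scheme for all $n\geq n_0$. Finally, for $n\geq n_0$, applying Remark~2 after the definition of ``built in the same way'' to the replacement scheme $(v_n,\overline{v_n})$ and to $v_n\prec v_{n+1}\in A_V$ produces a replacement scheme $(v_{n+1},\tilde w)$ in which $\tilde w$ is built from $\overline{v_n}$ in the same way $v_{n+1}$ is built from $v_n$; by the uniqueness observation of the first paragraph $\tilde w=\overline{v_{n+1}}$, so $v_{n+1}$ is built from $v_n$ in the same way $\overline{v_{n+1}}$ is built from $\overline{v_n}$, i.e., symmetrically (again by the equivalence before the statement). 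This gives (ii) with $N=n_0$. The step I expect to be the real obstacle is exactly the equality $n_0=m_0$: ruling out a nonzero offset between the two generating sequences is what forces the building to be \emph{symmetric} rather than merely symmetric-up-to-a-shift, and it is the only place where one must go beyond the formal manipulation of replacement schemes, via the length inequality above.
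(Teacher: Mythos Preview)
Your proof is correct and follows essentially the same approach as the paper: translate (i) into the existence of a replacement scheme for $V$ and $\overline V$, show that any such scheme can be taken of the form $(v_n,\overline{v_n})$, and then read off the symmetric building of $v_{n+1}$ from $v_n$. In fact you supply more detail than the paper does at the key step---the paper simply asserts that $(v_m,\overline{v_n})$ being a replacement scheme forces $m=n$ as ``straightforward to check,'' whereas your length inequality $|v_{a+1}|\geq |v_a|+|v_b|>|v_b|$ gives an explicit and clean justification.
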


\begin{proof}  

Suppose $N \in \N$ is such for all $n \geq N$, $v_{n+1}$ is built symmetrically from $v_n$.    It is straightforward to check that $(v_N, \overline{v_N})$ is a replacement scheme for $V$ and $\overline{V}$.  Thus, $(X, \sigma)$ and $(\overline{X}, \sigma)$ are topologically isomorphic, which implies that $(X, \sigma)$ and $(X, \sigma^{-1})$ are topologically isomorphic.

Now suppose $(X, \sigma)$ is isomorphic to $(X, \sigma^{-1})$.  Then $(X, \sigma)$ and $(\overline{X}, \sigma)$ are topologically isomorphic.  By Corollary \ref{correplacement}, there is a replacement scheme $(v,w)$ for $V$ and $\overline{V}$.  Choose $M \in \N$ so that $|v| < |v_M|$.  Then for any $m \geq M$, $v \prec v_m$ (because $|v| < |v_m|$ and $v_m$ is comparable with every element of $A_V$).  By Lemma \ref{lemmaRS1} there is some $n$ so that $(v_m, \overline{v_n})$ is a replacement scheme for $V$ and $\overline{V}$.  It is straightforward to check that this can only happen when $m = n$.  Let $v_{n+1} = v_n 1^{a_1} v_n 1^{a_2} \ldots 1^{a_{r-2}}v_n 1^{a_{r-1}} v_n$.  By the definition of the reverse of a finite word, we know that $$\overline{v_{n+1}} = \overline{v_n} 1^{a_{r-1}}  \overline{v_n}  1^{a_{r-2}} \ldots 1^{a_{2}} \overline{v_n}  1^{a_{1}}  \overline{v_n}. $$
But we also know that $(v_n, \overline{v_n})$ is a replacement scheme for $V$ and $\overline{V}$.  Thus, $$\overline{v_{n+1}} = \overline{v_n} 1^{a_{1}}  \overline{v_n}  1^{a_{2}} \ldots 1^{a_{r-2}} \overline{v_n}  1^{a_{r-1}}  \overline{v_n}. $$
Thus, $\overline{v_{n+1}}$ is built symmetrically from $\overline{v_n}$, which implies that $v_{n+1}$ is built symmetrically from $v_n$.
\end{proof}

\medskip
\noindent{\sc Acknowledgment.} We thank Ben Miller for discussions on an earlier version of this paper.

\end{document}